\documentclass{article}
\usepackage[utf8]{inputenc}
\usepackage{amsmath}
\usepackage{amsfonts}
\usepackage{amssymb}
\usepackage{tikz-cd}
\usepackage{color,soul}
\usepackage{array}
\usepackage{zref-savepos}
\usepackage{amsthm}
\usepackage{multirow, bigstrut}
\usepackage[font={small,it}]{caption}
\usepackage{tikz}
\usepackage{bm}
\usepackage{titling}
\usetikzlibrary{arrows}

\usepackage{hyperref}
\hypersetup{
    colorlinks=true,
    linkcolor=blue,
    filecolor=magenta,  
    urlcolor=cyan,
}\usepackage[capitalize, nameinlink]{cleveref}
\usepackage{thmtools, thm-restate}

\usetikzlibrary{calc}
\usepackage{comment}
\usepackage{enumerate}
\usepackage{tikz}
\usetikzlibrary{shapes.geometric}

\newtheorem{theorem}{Theorem}[section]
\newtheorem*{theorem*}{Theorem}
\newtheorem{lemma}[theorem]{Lemma}
\newtheorem{proposition}[theorem]{Proposition}
\newtheorem{corollary}[theorem]{Corollary}

\theoremstyle{definition}
\newtheorem{definition}[theorem]{Definition}

\newtheorem{example}[theorem]{Example}
\newtheorem{question}[theorem]{Question}

\theoremstyle{plain}

\newcommand{\C}{\mathbb{C}}
\newcommand{\R}{\mathbb{R}}
\newcommand{\N}{\mathbb{N}}
\newcommand{\Z}{\mathbb{Z}}

\DeclareMathOperator{\tr}{tr}

\DeclareMathOperator{\cpc}{Cap}

\DeclareMathOperator{\per}{per}

\DeclareMathOperator{\Mat}{Mat}

\DeclareMathOperator{\HStab}{HStab}
\DeclareMathOperator{\SLC}{SLC}

\DeclareMathOperator{\MD}{MD}

\DeclareMathOperator{\MatTup}{MatTup}
\DeclareMathOperator{\PSD}{PSD}
\DeclareMathOperator{\conv}{conv}

\usepackage[backend=bibtex, style=alphabetic, backref=true,maxbibnames=99, url=false]{biblatex} 
\title{Unique Minimizers for Permanents, Mixed Discriminants, and Log-concave Polynomials}

\author{Leonid Gurvits and Jonathan Leake}

\begin{document}

\sloppy

\maketitle

\begin{abstract}
    The permanent and mixed discriminant of positive matrices are classic problems for which we do not expect an efficient algorithm for exact computation. Thus much work has been done to understand how well we can bound and approximately compute these quantities. One line of research in this area begins with the results of \cite{Gurvits2008} where van der Waerden lower bounds of $\frac{n!}{n^n}$ are proven for doubly stochastic inputs for both problems, using a simple proof via stable polynomials. Along with the bound itself, the same techniques are used to show that the permanent and mixed discriminant are uniquely minimized at a certain natural symmetric input.

    In this paper, we generalize those results in two ways. First, we extend the unique minimization results beyond doubly stochastic inputs to other marginals which are near doubly stochastic. This yields the first such unique minimization results for the mixed discriminant beyond the doubly stochastic case. We also discuss why one cannot hope similar results to hold in general for all marginals. Second, we extend the unique minimization result for real stable polynomials to strongly log-concave (aka Lorentzian) polynomials in the doubly stochastic case. This captures an analogous previous result on unique minimization for the mixed volume. Finally, we discuss various open problems related to these results.
\end{abstract}

\section{Introduction}

The permanent has received significant attention due to its connections to fundamental problems in combinatorics and computer science. This has led to various questions and answers regarding bounding and approximating the permanent under a wide range of different assumptions. One of the most famous is that of the Van der Waerden conjecture from the 1930s (settled in 1980ish by \cite{Fal81,Egor81}), which says
\[
    \per(M) \geq \frac{n!}{n^n}
\]
for all doubly stochastic $M \in \R_{\geq 0}^{n \times n}$ (i.e., $M\bm{1} = M^\top\bm{1} = \bm{1}$), and that $M = \frac{1}{n} \bm{1} \cdot \bm{1}^\top$ is the unique minimizer over this set of matrices.

This fact was, for example, used to give the first deterministic $e^n$-approximation algorithm for the permanent of a matrix with non-negative entries \cite{LSW98}. Note that computing the permanent is \#P-complete and thus efficient exact computation is not expected to be possible. This is true even when $M \in \{0,1\}^{n \times n}$, where $\per(M)$ counts perfect matchings of the associated bipartite graph.

Around 20 years ago, the first author initiated a line of work which gave simple proofs of these facts using real stable polynomials and polynomial capacity \cite{Gur07VdW,Gurvits2008}. This proof method has the added benefit of also applying to the mixed discriminant of positive semidefinite matrices \cite{Gur06MD}, and to some extent to the mixed volume of convex bodies \cite{Gur09MV}. Later works extended these techniques to obtain approximation algorithms for various other combinatorial and probabilistic quantities as well, including the traveling salesman problem \cite{KKO21,GKL24}. These works generalized the doubly stochastic lower bound and rank-one unique minimization result for the permanent, to matrices and real stable polynomials more generally which are only ``close'' to being doubly stochastic. This extra robustness in the bounds proved to be a crucial step towards these new applications.

The goal of this paper is then to extend the robust rank-one minimizer uniqueness results to new regimes. Specifically, we $(1)$ generalize recent unique minimization results for the permanent to the mixed discriminant, $(2)$ generalize coefficient minimization uniqueness for real stable polynomials to strongly log-concave polynomials, and $(3)$ provide some counterexamples to possible extensions of these and related results.

In the rest of this section, we state and discuss our main results. See \Cref{sec:prelims} for any not-yet-defined notions.

\subsection{Main Result: Mixed Discriminant Unique Rank-One Minimizer} \label{sec:main-MD}

We now extend the fact that $\frac{1}{n} \bm{1}\bm{1}^\top$ uniquely minimizes the permanent over all doubly stochastic matrices in a variety of ways. We first recall the definition of the \textbf{permanent}:
%
% We first give our main results on unique minimizers of certain symmetric multilinear forms. The first relates to the \textbf{permanent}, defined via
\[
    \per(M) = \sum_{\sigma \in S_n} \prod_{i=1}^n m_{i,\sigma(i)}
\]
for a given $n \times n$ matrix $M$. The following is an extension of the unique minimization result for the permanent proven in \cite{GKL24}.

\begin{theorem}[\cite{GKL24}] \label{main-per-unique-min}
    For all $n$ there exists $\epsilon > 0$ such that: for all $\bm{c}$ such that $\|\bm{c}-\bm{1}\|_1 < \epsilon$ and $\sum_{i=1}^n c_i = n$, the permanent is uniquely minimized by $M = \frac{1}{n} \bm{1} \cdot \bm{c}^\top$ over the set of non-negative entry $n \times n$ matrices with row sums $\bm{1}$ and column sums $\bm{c}$.
\end{theorem}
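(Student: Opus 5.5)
Our plan is a local-to-global argument: first show that $M_0 = \frac{1}{n}\bm{1}\bm{c}^\top$ is a strict local minimizer with a quantitative margin, then use compactness and continuity in $\bm{c}$ to exclude far-away competitors. Let $\Omega_{\bm{c}}$ be the transportation polytope of nonnegative matrices with row sums $\bm{1}$ and column sums $\bm{c}$. For $\bm{c}$ near $\bm{1}$ with all $c_j > 0$, $M_0$ lies in the relative interior of $\Omega_{\bm{c}}$. Its tangent space is
\[
V = \{X : X\bm{1} = 0,\ X^\top \bm{1} = 0\}.
\]

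\textbf{Step 1 (first order).} We have $\partial \per/\partial m_{ij} = \per(M^{(i,j)})$, the permanent of the minor. At $M_0$ every such minor is $(n-1)!\, n^{-(n-1)} \prod_{k \neq j} c_k$. This depends only on $j$, so the gradient has the form $\bm{1} \bm{g}^\top$. It therefore annihilates $V$, since $\langle \bm{1}\bm{g}^\top, X\rangle = \bm{g}^\top X^\top \bm{1} = 0$. Hence $M_0$ is a critical point of $\per$ on $\Omega_{\bm{c}}$ for every $\bm{c}$.

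\textbf{Step 2 (second order).} Write $\per(M_0 + X)$ for $X \in V$. The quadratic term is $\sum \per(M_0^{(i,j),(k,l)})\, x_{ij} x_{kl}$ over $i \ne k$, $j \ne l$, and these $2\times 2$-deleted minors depend only on $\{j,l\}$. At $\bm{c}=\bm{1}$ all coefficients are equal, and the constraints $X\bm{1} = X^\top\bm{1} = 0$ reduce the form to a positive multiple of $\sum_{ij} x_{ij}^2 = \|X\|_F^2$. This is the classical Hessian computation behind van der Waerden uniqueness. The Hessian depends continuously on $\bm{c}$, so for $\|\bm{c}-\bm{1}\|_1$ small it remains positive definite on $V$ with a uniform constant. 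The third and higher terms are controlled uniformly for $\bm{c}$ in a compact neighborhood. This gives a radius $r > 0$, independent of $\bm{c}$, such that $\per(M) > \per(M_0)$ for all $M \in \Omega_{\bm{c}}$ with $0 < \|M - M_0\| < r$.

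\textbf{Step 3 (global).} This step handles $M \in \Omega_{\bm{c}}$ with $\|M - M_0\| \ge r$, and is the main obstacle. The danger is that the polytope $\Omega_{\bm{c}}$ deforms, so a far point could in principle dip below $\per(M_0)$. We argue by contradiction. Suppose $\bm{c}_k \to \bm{1}$ and $M_k \in \Omega_{\bm{c}_k}$ satisfy $\|M_k - M_0^{(k)}\| \ge r$ and $\per(M_k) \le \per(M_0^{(k)})$. By compactness, a subsequence converges to a doubly stochastic $M_\infty$. This limit satisfies $\|M_\infty - \frac{1}{n}\bm{1}\bm{1}^\top\| \ge r$ and $\per(M_\infty) \le n!/n^n$. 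That contradicts the uniqueness part of van der Waerden (Egorychev–Falikman, or Gurvits's capacity proof).

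The delicate point is ensuring that the local radius $r$ in Step 2 is genuinely uniform in $\bm{c}$. We also need the contradiction in Step 3 to use a fixed $r$ chosen before $\epsilon$; the logical order is fix $r$, then $\epsilon$. If the Hessian computation proves messy, we would instead substitute the capacity/real-stability bound $\per(M) \ge \frac{n!}{n^n}\prod_j c_j^{c_j}$ with equality analysis near $\bm{1}$. However, the compactness argument above seems the most direct route.
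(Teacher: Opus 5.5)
Your argument is correct, and it takes a genuinely different route from the paper. You work locally around $M_0$.
\begin{itemize}
\item Because $M_0$ has identical rows, the gradient of $\per$ there has the form $\bm{1}\bm{g}^\top$, so it vanishes on the $\bm{c}$-independent tangent space $V$.
\item At $\bm{c}=\bm{1}$ the quadratic term on $V$ is exactly $\frac{(n-2)!}{2n^{n-2}}\|X\|_F^2$. This is because the sum of the permanents of all $2\times 2$ submatrices of $X$ equals $\frac12\|X\|_F^2$ once all row and column sums of $X$ vanish.
\item Since $V$ does not move with $\bm{c}$, this positivity persists with a uniform constant for $\bm{c}$ near $\bm{1}$, which gives a uniform radius $r$.
\item Compactness against the Egorychev--Falikman equality case then disposes of competitors at distance at least $r$.
\end{itemize}

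The paper never expands around $M_0$. It obtains the statement as the diagonal case of \Cref{section-md-unique-min}: take $X_i$ to be the diagonal matrix carrying the $i$-th row of $M$, so that $\MD=\per/n!$. It then argues globally in two steps.
\begin{itemize}
\item The capacity bound $\cpc_{\bm{1}}(p)\ge(1-\|\bm{c}-\bm{1}\|_1/2)^n$ from \cite{GL21}, combined with the degree-refined bound \Cref{boundary-cap-bound} for matrices with a zero entry, shows that no minimizer lies on the relative boundary.
\item \Cref{thm:diagonal-minimizers} then forces every minimizer to have all rows equal, hence to be $\frac{1}{n}\bm{1}\bm{c}^\top$. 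The mechanism is that fixing all but two rows leaves a quadratic form with at most one interior minimizer, while swapping those two rows preserves the feasible set.
\end{itemize}

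The paper's route buys three things. It gives an explicit $\epsilon$, namely the inequality displayed in \Cref{section-md-unique-min}. It does not presuppose the doubly stochastic uniqueness theorem, and in fact reproves it at $\bm{c}=\bm{1}$. It also transfers directly to mixed discriminants. There, your Hessian computation on the tangent space of $\MatTup_n(C)$ would be heavier. Your compactness step would also need the doubly stochastic mixed discriminant uniqueness, which \cite{Gur06MD} obtains via Alexandrov--Fenchel equality cases. Your route is more elementary once that black box is granted, but its $\epsilon$ is non-effective, since it comes from a contradiction argument.

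Separately, drop your fallback: the inequality $\per(M)\ge\frac{n!}{n^n}\prod_j c_j^{c_j}$ is false for every $\bm{c}\neq\bm{1}$. It already fails at $M=M_0$, since $\per(M_0)=\frac{n!}{n^n}\prod_j c_j$ and $\sum_j (c_j-1)\log c_j>0$. The correct capacity input is the bound $(1-\|\bm{c}-\bm{1}\|_1/2)^n$ used by the paper.
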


Our first main result generalizes this to the \textbf{mixed discriminant}, and hence also reproves \Cref{main-per-unique-min}. Recall its definition:
\[
    \MD(X_1,\ldots,X_n) = \frac{1}{n!} \partial_{t_1} \cdots \partial_{t_n} \det\big(t_1 X_1 + \cdots + t_n X_n\big)
\]
for a given tuple of $n \times n$ matrices $(X_1,\ldots,X_n)$.

\begin{theorem} \label{main-MD-unique-min}
    For all $n$ there exists $\epsilon > 0$ such that: for all $C$ of trace $n$ and eigenvalues $\bm{c}$ such that $\|\bm{c} - \bm{1}\|_1 < \epsilon$, the mixed discriminant is uniquely minimized by $(X_1,\ldots,X_n) = \frac{1}{n}(C,\ldots,C)$ over $n$-tuples $(X_1,\ldots,X_n)$ of trace-$1$ PSD matrices such that $\sum_{i=1}^n X_i = C$.
\end{theorem}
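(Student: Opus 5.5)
The plan is a perturbation argument around the doubly stochastic case. The doubly stochastic uniqueness result, that $\frac1n(I,\ldots,I)$ uniquely minimizes $\MD$ over trace-$1$ PSD tuples summing to $I$, is the mixed discriminant analogue of the van der Waerden statement recalled in the introduction. I take it as known from the earlier stable-polynomial work.

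\textbf{Normalization.} First, $\MD(UX_1U^*,\ldots,UX_nU^*) = \MD(X_1,\ldots,X_n)$ for unitary $U$, and conjugation preserves trace and PSD-ness. So I may assume $C = \mathrm{diag}(\bm{c})$. The hypothesis $\|\bm{c}-\bm{1}\|_1<\epsilon$ then gives $\|C - I\| < \epsilon$ in operator norm.

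\textbf{Step 1: minimizers are close to the candidate.} Let $K_C$ be the compact set of feasible tuples, and suppose the claim fails along a sequence $C_k \to I$. Pick minimizers $\mathbf{X}^{(k)} \in K_{C_k}$, which exist by compactness. Then
\[
\MD(\mathbf{X}^{(k)}) \le \MD\big(\tfrac1n(C_k,\ldots,C_k)\big) \to \MD\big(\tfrac1n(I,\ldots,I)\big) = \tfrac{n!}{n^n}.
\]
Any limit point is feasible for $C=I$ and has value at most $\frac{n!}{n^n}$. By the doubly stochastic uniqueness it must equal $\frac1n(I,\ldots,I)$. Hence for every $\delta>0$ there is $\epsilon>0$ such that every minimizer lies within distance $\delta$ of $\frac1n(I,\ldots,I)$, and therefore within distance $2\delta$ of $\frac1n(C,\ldots,C)$.

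\textbf{Step 2: local strict minimality, uniformly in $C$.} Write a feasible tuple as $X_i = \frac1n C + Y_i$, where the perturbations satisfy $\sum_i Y_i = 0$ and $\tr Y_i = 0$. Call this linear space of perturbations $V$. Because $\MD$ is multilinear and symmetric, it expands as
\[
\MD = \MD\big(\tfrac{C}{n},\ldots,\tfrac{C}{n}\big) + \sum_i \MD\big(Y_i,\tfrac{C}{n},\ldots\big) + \sum_{i<j} \MD\big(Y_i,Y_j,\tfrac{C}{n},\ldots\big) + R(Y),
\]
with $|R(Y)| \le K\|Y\|^3$ for $\|Y\|\le 1$, where $K$ is uniform in $C$ near $I$. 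The linear term is $L_C\big(\sum_i Y_i\big) = 0$, since $Y \mapsto \MD(Y, \frac{C}{n},\ldots)$ is one linear functional applied to each $Y_i$.

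For $C = I$, the quadratic term is proportional to $\sum_{i<j}\big(\tr Y_i \tr Y_j - \tr(Y_iY_j)\big)$ with a positive constant. Using $\sum_i Y_i = 0$ and $\tr Y_i = 0$, this sum equals $\tfrac12\sum_i \tr(Y_i^2) = \tfrac12\|Y\|_F^2$. So the quadratic form is positive definite on $V$.

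By continuity in $C$, the quadratic form $Q_C$ satisfies $Q_C(Y) \ge \frac14 c_0\|Y\|^2$ on $V$ for $\|C - I\|$ small. Here $c_0$ is the constant of the $C=I$ case, and $V$ does not depend on $C$. Hence $\MD > \MD(\frac{C}{n},\ldots,\frac{C}{n})$ whenever $0 < \|Y\| < \delta_0 := c_0/(8K)$.

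\textbf{Conclusion.} Choose $\epsilon$ so that Step 1 holds with $2\delta < \delta_0$. Every minimizer then lies in the region where Step 2 applies, so it must have $Y = 0$. This proves uniqueness.

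\textbf{Main obstacle.} The delicate point is the uniformity in Step 2. The positive definiteness of the Hessian and the remainder constant must hold uniformly over all $C$ with $\|C-I\|<\epsilon$, so that Steps 1 and 2 can be combined with a single $\epsilon$. The PSD constraint causes no trouble, because $\frac1n C$ lies in the interior of the PSD cone. This is the step I would write out most carefully.
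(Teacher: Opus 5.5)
Your argument is correct, but it takes a genuinely different route from the paper.

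The paper never uses the doubly stochastic case as an input. It proceeds in three steps:
\begin{enumerate}
\item It lower-bounds $\cpc_{\bm 1}(p)$ for $p=\det(\sum_i x_iA_i)$ by $(1-\|\bm c-\bm 1\|_1/2)^n$. Diagonalizing $\sum_i x_iA_i$ pointwise gives a product of linear forms with column sums $\bm 1$ and row sums majorized by $\bm c$, and then \Cref{cap-bound-transpose} and \Cref{thm:rs-cap-bound} apply.
\item On the relative boundary of $\MatTup_n(C)$ some $A_i$ is singular, so $p(\bm e_i)=0$, and \Cref{boundary-cap-bound} pushes $\MD$ strictly above $\det(C)/n^n$.
\item The symmetrization result \Cref{thm:diagonal-minimizers} then shows every minimizer has the form $(A,\ldots,A)$, hence equals $\frac1n(C,\ldots,C)$.
\end{enumerate}
You instead combine a compactness reduction to $C=I$ with a uniform second-order analysis. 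Your computations are right: the linear term vanishes, and $\sum_{i<j}(\tr Y_i\tr Y_j-\tr(Y_iY_j))=\tfrac12\|Y\|_F^2$ on $V$.

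What each route buys:
\begin{itemize}
\item \textbf{The paper's route} gives an explicit, checkable $\epsilon$ (the inequality in \Cref{section-md-unique-min}). It is also independent of the doubly stochastic uniqueness theorem, whose original proof uses the Alexandrov--Fenchel equality cases; at $\bm c=\bm 1$ it even reproves that theorem.
\item \textbf{Your route} gives no effective $\epsilon$, since Step 1 is a compactness-by-contradiction argument, and it uses that theorem as a black box. In exchange it is short. It is an instance of a general stability principle for nondegenerate unique interior minimizers, which needs only that the candidate remain a critical point. For example, it appears to apply verbatim to the variant in the paper's first open question ($\tr X_i=t_i$, $\sum_iX_i=I$, candidate $\frac1n(t_1I,\ldots,t_nI)$), where the linear term vanishes because $\tr Y_i=0$ and the Hessian at $\bm t=\bm 1$ is the doubly stochastic one.
\end{itemize}

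A harmless slip: with the paper's $\frac1{n!}$ normalization, $\MD(\frac1n(I,\ldots,I))=\frac1{n^n}$, not $\frac{n!}{n^n}$.
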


The proof of this result can be found in \Cref{sec:proof-MD-rank-one-minimizer}, where we also give more explicit bounds on the value of $\epsilon$ required for the result.

We finally note that unique minimization of the mixed discriminant in the doubly stochastic case (i.e. when $\|\bm{c}-\bm{1}\|_1 = 0$) was proven in \cite{Gur06MD} using the known equality conditions of the Alexandrov-Fenchel inequalities for the mixed discriminant. We are not aware of such easily understood and verified conditions in the strongly log-concave case, even for mixed volumes; the proofs in this paper do not use the Alexandrov-Fenchel inequalities whatsoever. In a certain sense, the proof path using Alexandrov-Fenchel is hidden in the inductive proofs of analogues of the Van der Waerden and Schrijver's inequalities for stable and strongly-loncave polynomials, where the equality conditions are replaced by much more elementary convex optimization arguments.

% using the Alexandrov-Fenchel inequalities. One further contribution of this paper is that we do not need the Alexandrov-Fenchel inequalities to prove \Cref{main-MD-unique-min}.

\subsection{Main Result: Strongly Log-concave Unique Rank-one Minimizer}

The permanent result and the mixed discriminant result are both obtained by analyzing real stable polynomials. See \Cref{sec:lc-polys} for definitions and discussion of classes of log-concave polynomials. One general result in this direction is given as follows.

\begin{theorem}[\cite{Gur07VdW}] \label{rs-unique-min}
    For all $n$, the all-ones coefficient is uniquely minimized by $p(\bm{x}) = \left(\frac{x_1 + \cdots + x_n}{n}\right)^n$ over homogeneous real stable polynomials $p$ such that $p(\bm{1}) = 1$ and $\nabla p(\bm{1}) = \bm{1}$.
\end{theorem}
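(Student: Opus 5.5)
The plan is to follow the capacity-based induction from \cite{Gur07VdW}, tracking when each inequality is tight. Let $p$ be homogeneous real stable of degree $n$ in $n$ variables, with nonnegative coefficients, $p(\bm{1})=1$ and $\nabla p(\bm{1})=\bm{1}$. Write $q_n = p$ and $q_{k-1} = \partial_{x_k} q_k |_{x_k=0}$, so that $q_0 = \partial_{x_1}\cdots\partial_{x_n} p$ is the all-ones coefficient. Define the capacity
\[
\cpc(q)=\inf_{\bm{x}>0}\frac{q(\bm{x})}{x_1\cdots x_k}.
\]
The two facts to recall are these. First, $\cpc(p)=1$: by log-concavity of $p$ on the positive orthant together with $\nabla \log p(\bm{1})=\bm{1}$, the AM--GM tangent bound gives $p(\bm{x})\ge\prod x_i$. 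Second, the key lemma says that for $q$ real stable of degree $k$ in $k$ variables,
\[
\cpc(\partial_{x_k} q|_{x_k=0}) \ \ge\ g(\deg_{x_k} q)\,\cpc(q), \qquad g(d)=\left(\tfrac{d-1}{d}\right)^{d-1}.
\]
Since $g$ is decreasing and $\deg_{x_k} q_k\le k$, chaining these gives $q_0\ge\prod_{k} g(k)=n!/n^n$. This value is attained by $\left(\frac{x_1+\cdots+x_n}{n}\right)^n$.

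For uniqueness, suppose $q_0=n!/n^n$. Then every inequality in the chain is an equality, so for each $k$ we get $\deg_{x_k}q_k=k$ and equality holds in the key lemma. I would re-derive the key lemma in its standard form. Fix $\bm{y}>0$ in the first $k-1$ variables. The univariate polynomial $t\mapsto q(\bm{y},t)$ is real-rooted with nonnegative coefficients and degree $d$. Its value is at least $\cpc(q)\,t\prod y_i$ for all $t>0$, and the optimization over real-rooted polynomials gives $q'(\bm{y},0)\ge g(d)\cpc(q)\prod y_i$. Equality in that univariate step forces $q(\bm{y},t)=a(\bm{y})(b(\bm{y})+t)^d$.

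The main obstacle is turning equality of the \emph{capacities} into a structural statement, because capacity is an infimum and equality only needs to hold near minimizing sequences. To handle this, I would apply the argument top-down at the point $\bm{1}$, using the normalization, and then induct on $n$.

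Concretely, the first step takes $\bm{y}=\bm{1}$ for $q_n=p$. We have $p(\bm{1},t)\ge t$ with equality at $t=1$ and derivative $1$ there. The univariate extremal problem, minimizing $r'(0)$ over real-rooted $r$ of degree $\le n$ with $r(t)\ge t$ tangent at $t=1$, has the unique minimizer $r(t)=\left(\frac{n-1+t}{n}\right)^n$. I would prove this uniqueness directly from the strict convexity and AM--GM step in the univariate lemma. This pins down $p(\bm{1},t)$.

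The second step is to show that $q_{n-1}/q_{n-1}(\bm{1})$ again satisfies the normalization $\nabla=\bm{1}$ up to scaling, so that the induction hypothesis applies. The first step gives $q_{n-1}(\bm{1})=\left(\frac{n-1}{n}\right)^{n-1}$, which equals $g(n)$. Comparing this with $\cpc(q_{n-1})\ge g(n)$ forces $\bm{1}$ to be a minimizer of $q_{n-1}/\prod x_i$. Then the first-order condition at this interior minimizer gives $\nabla q_{n-1}(\bm{1})$ proportional to $\bm{1}$. By induction, $q_{n-1}=c\left(\frac{x_1+\cdots+x_{n-1}}{n-1}\right)^{n-1}$.

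Symmetry in the choice of which variable is differentiated shows the same holds for every $\partial_{x_i}p|_{x_i=0}$. The final step is to reconstruct $p$ from these restrictions. Combining them with $p(\bm{1},t)$ and with real stability, specifically through the equality case of the univariate lemma for all $\bm{y}$ near $\bm{1}$, which forces $p(\bm{y},t)=a(\bm{y})(b(\bm{y})+t)^n$, should force $p=\left(\frac{\sum x_i}{n}\right)^n$. I expect this reconstruction to need care, and I would first try to show that $p$ is an $n$-th power of a linear form using the local factorization $a(\bm{y})(b(\bm{y})+t)^n$.
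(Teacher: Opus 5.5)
There is a genuine gap, and it sits exactly at the obstacle you flagged: your first two steps are circular. To conclude $p(\bm{1},t)=\left(\frac{n-1+t}{n}\right)^n$ from uniqueness in the univariate extremal problem, you need $p(\bm{1},t)$ to \emph{attain} the extremal value, i.e.\ $q_{n-1}(\bm{1})=\left(\frac{n-1}{n}\right)^{n-1}$. Equality in the capacity chain only gives $\cpc_{\bm{1}}(q_{n-1})=\left(\frac{n-1}{n}\right)^{n-1}$, and hence only $q_{n-1}(\bm{1})\ge\left(\frac{n-1}{n}\right)^{n-1}$. Tangency of $p(\bm{1},t)$ to $t$ at $t=1$ does not help: it is satisfied by many real-rooted $r$ with larger $r'(0)$, for example $r(t)=t$ coming from $x_1\cdots x_n$. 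Your second step then deduces that $\bm{1}$ minimizes $q_{n-1}(\bm{x})/\prod_i x_i$ from the output of the first step, which is precisely the missing input. Evaluating at $\bm{1}$ does not get around the fact that equality of capacities carries no information at a prescribed point.

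The paper closes this as follows. Running the chain in every variable order, as you do, gives $\deg_{x_i}p=n$ for all $i$. A support lemma for SLC (hence real stable) polynomials upgrades this to full support of $p$. Full support yields two things. First, $\cpc_{\bm{1}}(q_{n-1})$ is attained at some $\bm{y}>\bm{0}$, because sublevel sets on $\{\prod_i x_i=1\}$ are compact. Second, $\bm{1}$ is the unique minimizer of $p(\bm{x})/\prod_i x_i$ on $\{\prod_i x_i=1\}$, by strict Jensen. Now apply the univariate inequality at $\bm{y}$ rather than at $\bm{1}$. With $f(t)=p(\bm{y},t)$ and $\prod_i y_i=1$,
\[
\cpc_{\bm{1}}(q_{n-1})=\partial_t f(0)\ \ge\ \left(\tfrac{n-1}{n}\right)^{n-1}\cpc_1(f)\ \ge\ \left(\tfrac{n-1}{n}\right)^{n-1}\cpc_{\bm{1}}(p)=\cpc_{\bm{1}}(q_{n-1}).
\]
Hence $\cpc_1(f)=\cpc_{\bm{1}}(p)$, which makes $(\bm{y},t_0)$ a capacity minimizer of $p$ and forces $\bm{y}=\bm{1}$. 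Only then is $p(\bm{1},t)$ pinned down.

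Your final reconstruction is also unsupported: nothing you have shown gives equality in the univariate lemma at points $\bm{y}\neq\bm{1}$. The induction is in any case unnecessary. The leading coefficient of $p(\bm{1},t)$ gives $p(\bm{e}_i)=n^{-n}$ for each $i$. Concavity of $p^{1/n}$ then sandwiches it on the simplex between $\sum_i x_i\,p^{1/n}(\bm{e}_i)$ and the tangent plane at $\bm{1}$, and both equal $\frac{1}{n}\sum_i x_i$.

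A smaller correction: $p(\bm{x})\ge\prod_i x_i$ does not follow from log-concavity of $p$, since its tangent bound is an upper bound. It follows from convexity of $\bm{t}\mapsto\log p(e^{\bm{t}})$, i.e.\ Jensen over the coefficients.
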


Our second result is to generalize \Cref{rs-unique-min} by extending it beyond real stable to strongly log-concave polynomials \cite{Gur09} (also called Lorentzian \cite{BH20} and completely log-concave \cite{ALOGV18iii}). Previous results utilized specific properties of real stable polynomials, and here we are able to generalize beyond this setting.

\begin{theorem} \label{main-SLC-unique-min}
    For all $n$, the all-ones coefficient of $p$ is uniquely minimized by $p(\bm{x}) = \left(\frac{x_1 + \cdots + x_n}{n}\right)^n$ over homogeneous strongly log-concave polynomials $p$ such that $p(\bm{1}) = 1$ and $\nabla p(\bm{1}) = \bm{1}$.
\end{theorem}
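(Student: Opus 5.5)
We will argue by induction on $n$, following the capacity-based proof of the van der Waerden-type bound for strongly log-concave polynomials and tracking when each inequality can be tight. Recall the chain: for homogeneous SLC $p$ of degree $n$ in $n$ variables, $\cpc(p)=\inf_{\bm{x}>0} p(\bm{x})/\prod x_i$. The normalization $p(\bm{1})=1$, $\nabla p(\bm{1})=\bm{1}$ gives $\cpc(p)=1$. To see this, write $\log p(e^{\bm{y}})$, which is convex in $\bm{y}$ for polynomials with nonnegative coefficients. Its gradient at $\bm{y}=0$ is $\bm{1}$, so $\log p(e^{\bm{y}})\ge \sum y_i$, and the infimum is attained at $\bm{1}$.

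The inductive step sets $q(x_1,\dots,x_{n-1})=\partial_{x_n}p|_{x_n=0}$, which is again SLC. It uses the univariate estimate $\cpc(q)\ge g(k)\,\cpc(p)$, where $k=\deg_{x_n}p\le n$ and $g(k)=\left(\frac{k-1}{k}\right)^{k-1}$ is decreasing. Iterating yields $\partial_{x_1}\cdots\partial_{x_n}p\ge \prod_{k=1}^n g(k)=\frac{n!}{n^n}$. Since this value is achieved by $\left(\frac{x_1+\cdots+x_n}{n}\right)^n$, any minimizer $p$ must make every inequality in the chain an equality. We will show that this forces $p=\left(\frac{x_1+\cdots+x_n}{n}\right)^n$.

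The key steps, in order, are as follows.

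\textbf{(1)} Isolate the univariate lemma. For fixed $\bm{x}'>0$, the polynomial $t\mapsto p(\bm{x}',t)$ has nonnegative coefficients and degree $k$, and is real-rooted because bivariate restrictions of SLC polynomials are real stable. The bound $\cpc(q)\ge g(k)\cpc(p)$ comes from comparing $p(\bm{x}',0)$, $\partial_t p(\bm{x}',0)$ and $\inf_t p(\bm{x}',t)/t$. We record its equality case: equality forces $k=n$, and forces $p(\bm{x}',t)=(a+bt)^n$ at the $\bm{x}'$ realizing $\cpc(q)$. This is a Newton/AM--GM-type equality, since all roots must coincide.

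\textbf{(2)} Apply the equality case. Equality in the whole chain gives $\cpc(q)=\frac{(n-1)^{n-1}}{n^{n-1}}$, and $q$ attains its capacity at some point. After rescaling the variables to move that point to $\bm{1}$, the polynomial $\tilde q$ satisfies $\tilde q(\bm{1})=1$ and $\nabla\tilde q(\bm{1})=\bm{1}$, and its all-ones coefficient equals $\frac{(n-1)!}{(n-1)^{n-1}}$. The induction hypothesis then yields $\tilde q=\left(\frac{x_1+\cdots+x_{n-1}}{n-1}\right)^{n-1}$. Because $\cpc(p)$ was attained at $\bm{1}$, we expect the minimizing point of $q$ to be $\bm{1}$, so that the rescaling is trivial; we will check this directly. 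The same argument applies with any variable $x_i$ in place of $x_n$. Hence for every $i$,
\[
\partial_{x_i}p\big|_{x_i=0}=\frac{n}{n}\cdot\left(\frac{\sum_{j\ne i}x_j}{n}\right)^{n-1}\cdot c_i
\]
for explicit constants $c_i$, and $\deg_{x_i}p=n$ for each $i$.

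\textbf{(3)} Reconstruct $p$. Step (1) gives, along the line through $\bm{x}'=\bm{1}$, the identity $p(\bm{1}',t)=\left(\frac{n-1+t}{n}\right)^n$ in every coordinate direction. Together with the data from step (2), this pins down $p$ as follows. From $\deg_{x_i}p=n$, the coefficient of $x_i^n$ is nonzero for all $i$. From the pure-power and derivative data, every coefficient of $p$ of the form $x_i^{m}\cdot(\text{monomial in the others})$ with $m\le 1$ is already determined. We then use log-concavity of $p$ on the positive orthant, via the Hessian signature condition at $\bm{1}$: $\nabla^2 p(\bm{1})$ has at most one positive eigenvalue, and $p\,\nabla^2p-\frac{n-1}{n}\nabla p\nabla p^\top\preceq 0$ at $\bm{1}$. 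This should force the Hessian at $\bm{1}$ to be the rank-one matrix $\frac{n-1}{n}\bm{1}\bm{1}^\top$, and iterating the argument on derivatives should force all remaining coefficients to match $\left(\frac{\sum x_i}{n}\right)^n$.

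The main obstacle is step (3), together with the precise equality analysis in step (1). Equality in the univariate estimate only gives information along the capacity-attaining line, not globally. Upgrading this to an identity of polynomials is where real stability was used in \Cref{rs-unique-min}, and where we must substitute SLC-specific facts. We would first try the following: show that equality forces $\nabla^2p(\bm{1})$ to be rank one. Then use the known fact that an SLC polynomial whose Hessian at an interior point is rank one is a power of a linear form; this follows from the Lorentzian/hyperbolicity structure of its quadratic derivatives. The normalization $\nabla p(\bm{1})=\bm{1}$ then identifies that linear form as $\frac{1}{n}\sum x_i$.
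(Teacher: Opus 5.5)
\textbf{Verdict: genuine gaps.} The two steps that carry the argument are exactly the ones you leave open, and step (1) rests on a false claim.

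\textbf{Step (1).} Univariate restrictions of SLC polynomials are only $n$-Newton, not real-rooted; bivariate SLC polynomials need not be real stable. For example, $x_1^3+3x_1^2x_3+3x_1x_3^2$ is SLC, but $p(1,1,t)=1+3t+3t^2=\mathrm{trunc}_2[(1+t)^3]$ has no real roots. So the ``all roots coincide'' equality argument is unavailable. Your degree-dependent bound $\cpc_{\bm 1}(q)\ge(\frac{k-1}{k})^{k-1}\cpc_{\bm 1}(p)$ also fails: here $\partial_tf(0)/\cpc_1(f)=3/(3+2\sqrt3)<\frac12$. What does hold is \Cref{univar-Newton-cap}: $\partial_tf(0)\ge\gamma_{n,d}(\frac{n-1}{n})^{n-1}\cpc_1(f)$, with $\gamma_{n,d}>1$ for $d<n$ and equality only for $\mathrm{trunc}_d[(a+bt)^n]$. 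This repair still yields your conclusions that $\deg_{x_i}p=n$ and that $f=(a+bt)^n$ at an equality point.

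\textbf{Step (2).} The claim you defer, that $\cpc_{\bm 1}(q)$ is attained and attained at $\bm 1$, is the heart of the paper's argument (its Step 3). It needs three ingredients:
\begin{enumerate}
\item Positivity of the $x_i^n$ coefficients gives full support via \Cref{slc-support}, so $\cpc_{\bm 1}(p_n)$ is attained at some $\bm y$ with $\prod_i y_i=1$.
\item $\cpc_{\bm 1}(p)$ is \emph{uniquely} attained at $\bm 1$, by the Jensen argument of \Cref{ds-cap-unique-min}.
\item For $f(t)=p(\bm y,t)$, the chain $\cpc_{\bm 1}(p_n)=\partial_tf(0)\ge(\frac{n-1}{n})^{n-1}\cpc_1(f)\ge(\frac{n-1}{n})^{n-1}\cpc_{\bm 1}(p)=\cpc_{\bm 1}(p_n)$ forces $\cpc_1(f)=\cpc_{\bm 1}(p)$, hence $\bm y=\bm 1$.
\end{enumerate}
Without this you do not have $p(1,\ldots,1,t)=(\frac{n-1+t}{n})^n$.

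\textbf{Step (3).} This is only heuristic: it says the Hessian ``should'' be forced to be rank one and then relies on an unproven rigidity claim for SLC polynomials with rank-one Hessian. The missing idea is an elementary sandwich. The $t^n$ coefficient of $p(1,\ldots,1,t)$ gives $p(\bm e_i)=n^{-n}$ for every $i$. Concavity of $p^{1/n}$ (\Cref{alt-slc-def-1/d}) then gives, for $\bm x$ in the simplex,
$\frac1n\sum_ix_i=\sum_ix_ip^{1/n}(\bm e_i)\le p^{1/n}(\bm x)\le p^{1/n}(\bm 1)+\nabla p^{1/n}(\bm 1)^\top(\bm x-\bm 1)=\frac1n\sum_ix_i$.
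This closes the proof directly. The induction in step (2) is therefore unnecessary. It would in any case require showing $\cpc_{\bm 1}(q)$ is attained and rescaling $q$ to be doubly stochastic before the hypothesis applies.
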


We actually prove a more general version of this result based on polynomial capacity; see \Cref{sec:proof-SLC-rank-one-minimizer} for the statement and proof.

As a last comment, \Cref{main-SLC-unique-min} was proven for volume polynomials (which are strongly log-concave) in \cite{Gur09MV} using a more complicated proof. The main difference in the proofs comes from the fact that we are able to show an inductive property of doubly stochastic minimizers; i.e. that the partial derivatives must also be doubly stochastic. See Step 3 of \Cref{subsec:proof-SLC-unique-min} for further details. See also the last paragraph of \Cref{sec:main-MD} for discussion on the connection of these strongly log-concave results to the Alexandrov-Fenchel inequalities.

\subsection{Main Result: Failure of Rank-One Minimizers}

To understand the delicateness of the above results, we now provide some negative results for statements which naturally generalize the above results. The first result shows why one of the marginal vectors needs to be $\bm{1}$ in the above results.

\begin{proposition} \label{prop:diff-r-c}
    For all $n$ and all $\bm{r},\bm{c} \neq \bm{1}$, the rank-one matrix $\frac{1}{n} \bm{r} \bm{c}^\top$ does not minimize the permanent over the set of non-negative entry $n \times n$ matrices with row sums $\bm{r}$ and column sums $\bm{c}$.
\end{proposition}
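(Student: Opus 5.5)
The plan is a first-order perturbation argument. We show that at $M = \frac{1}{n}\bm{r}\bm{c}^\top$ the permanent has a nonzero directional derivative along some direction that preserves the row and column sums. Since $M$ lies in the relative interior of the transportation polytope, moving slightly in one of the two opposite directions then strictly decreases the permanent.

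Throughout I assume $\bm{r},\bm{c}$ have positive entries and $\sum_i r_i = \sum_j c_j = n$, which is the normalization implicit in writing $\frac{1}{n}\bm{r}\bm{c}^\top$. Then $M$ has strictly positive entries. Under this normalization, $\bm{r} \neq \bm{1}$ means $\bm{r}$ is not a constant vector, and likewise for $\bm{c}$. If the sums were some common $s \neq n$, I would rescale by $s$, and "$\neq \bm{1}$" would read "not constant".

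\textbf{Step 1: the gradient at a rank-one matrix.} For $M = \frac{1}{n}\bm{r}\bm{c}^\top$ we have $\partial \per(M)/\partial m_{ij} = \per(M^{(i,j)})$, where $M^{(i,j)}$ deletes row $i$ and column $j$. This minor is again rank one, so
\[
    \per(M^{(i,j)}) = (n-1)!\, n^{-(n-1)} \prod_{k \neq i} r_k \prod_{l \neq j} c_l = \frac{K}{r_i c_j},
\]
with $K = (n-1)!\, n^{-(n-1)} \prod_k r_k \prod_l c_l > 0$.

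\textbf{Step 2: a margin-preserving direction.} Since $\bm{r}$ and $\bm{c}$ are nonconstant, choose $i \neq k$ with $r_i \neq r_k$ and $j \neq l$ with $c_j \neq c_l$. Let
\[
    E = e_i e_j^\top - e_i e_l^\top - e_k e_j^\top + e_k e_l^\top.
\]
Then $E$ has zero row and column sums. Because all entries of $M$ are positive, $M + tE$ is feasible for all $|t|$ small.

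\textbf{Step 3: the derivative is nonzero.} The derivative of $\per(M+tE)$ at $t=0$ is
\[
    K\left(\frac{1}{r_i c_j} - \frac{1}{r_i c_l} - \frac{1}{r_k c_j} + \frac{1}{r_k c_l}\right) = K\left(\frac{1}{r_i} - \frac{1}{r_k}\right)\left(\frac{1}{c_j} - \frac{1}{c_l}\right) \neq 0.
\]
Choosing the sign of $t$ opposite to this derivative gives a feasible matrix with strictly smaller permanent, so $M$ is not a minimizer.

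Equivalently, a stationary point would require $1/(r_i c_j) = a_i + b_j$ for some vectors $\bm{a},\bm{b}$. A positive rank-one matrix $u_i v_j$ has this form only if $(u_i - u_k)(v_j - v_l) = 0$ for all indices, which forces $\bm{u}$ or $\bm{v}$ to be constant.

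The computations are routine. The only real obstacle is interpreting the hypotheses correctly: the argument needs both $\bm{r}$ and $\bm{c}$ nonconstant and $M$ entrywise positive. If zero marginals were allowed, I would first delete the zero rows and columns, which reduces to a smaller instance of the same problem.
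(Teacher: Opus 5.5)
Your proof is correct and is essentially the paper's argument: both use $\nabla \per(M)_{ij} = \per(M)/(r_i c_j)$ at the relative-interior point $M = \frac{1}{n}\bm{r}\bm{c}^\top$ and show that first-order optimality fails there. The one difference is the ending: you pair directly with the swap direction $E$, whose derivative factors as $K(1/r_i - 1/r_k)(1/c_j - 1/c_l) \neq 0$, whereas the paper describes $W^\perp$ as the matrices $a_i + b_j$ and invokes its rank-one lemma, so yours is a slightly cleaner finish.

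One correction to your closing aside about zero marginals: they cannot be removed by deleting rows and columns. A zero row or column sum forces $\per \equiv 0$ on the whole feasible set, so $M$ would then be a minimizer, and positivity of $\bm{r},\bm{c}$ is genuinely necessary for the statement.
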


And finally, even in the case where the column sums are $\bm{1}$, the row sums need to be close to $\bm{1}$ for rank-one unique minimization results to hold. We note that this result already appears in the conference paper \cite{GKL24}, but we include it and its proof here also for completeness.

\begin{proposition}[\cite{GKL24}] \label{prop:boundary-minimizer}
    For all $n$, there exists $\bm{c}$ such that: every non-negative entry $n \times n$ matrix with row sums $\bm{1}$ and column sums $\bm{c}$ has positive permanent, but the set of such matrices is minimized on its relative boundary.
\end{proposition}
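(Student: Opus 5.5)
The plan is to take $\bm{c}$ close to a degenerate marginal for which the transportation polytope contains matrices of zero permanent. The minimum of the permanent is then forced to be very small, and we argue that so small a value cannot be attained at a relative interior point. Concretely, for $n \geq 3$ (for $n=2$ a direct computation gives $\per(M) = 2a^2 - 2c_1 a + c_1$ along the one-dimensional polytope, minimized at the rank-one point, so the construction must use $n \geq 3$), set
\[
    \bm{c} = \bm{c}(t) = \big(1+(n-1)t,\, 1-t,\, \ldots,\, 1-t\big), \qquad 0 < t < \tfrac{1}{n-1}.
\]

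\textbf{Step 1 (positivity).} By Frobenius--K\"onig, $\per(M)=0$ if and only if $M$ has a zero block $M_{S,T}=0$ with $|S|+|T|=n+1$. In that case the $|T|-1$ rows outside $S$ carry all the mass of the columns in $T$, so $\sum_{j\in T} c_j \leq |T|-1$. Hence every matrix with row sums $\bm{1}$ and column sums $\bm{c}$ has positive permanent iff $\sum_{j \in T} c_j > |T|-1$ for all nonempty $T$. For $\bm{c}(t)$ the tightest case is $T$ consisting of $k$ of the small columns. There the condition reads $k(1-t) > k-1$, i.e.\ $kt<1$, which holds for all $k \leq n-1$ because $t<\frac{1}{n-1}$.

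\textbf{Step 2 (small boundary value).} As $t \to \frac{1}{n-1}$, consider the matrix $M_t$ whose first row is $e_1^\top$ and whose remaining rows each put mass $\frac{(n-1)t}{n-1} = t$ on column $1$ and spread the rest uniformly over columns $2,\ldots,n$. This $M_t$ lies on the relative boundary, since row $1$ has zeros. Its permanent equals $\per$ of an $(n-1)\times(n-1)$ matrix with all entries $\frac{1-t}{n-1}$, which is $(n-1)!\big(\frac{1-t}{n-1}\big)^{n-1}$. Rather than use this matrix directly, I would choose boundary matrices more cleverly, degenerating toward the zero-permanent limiting matrix where the block $S=\{1\}$, $T=\{2,\ldots,n\}$ vanishes. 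The aim is to get $\min \per \to 0$ as $t \to \frac{1}{n-1}$, using continuity of $\per$ and compactness of the polytopes together with the fact that the limiting polytope contains a zero-permanent matrix.

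\textbf{Step 3 (interior critical points are not small) --- the main obstacle.} Suppose the minimum were attained at a relative interior point $M$, i.e.\ one with all entries positive. Lagrange conditions give $\per(M^{(i,j)}) = \alpha_i + \beta_j$ for all $i,j$, where $M^{(i,j)}$ is the $(i,j)$-minor. Expanding along row $i$ and then along column $j$ gives $\per(M) = \sum_j m_{ij}(\alpha_i+\beta_j) = \alpha_i + \sum_j m_{ij}\beta_j$, and also $\per(M) = \sum_i m_{ij}(\alpha_i+\beta_j) = \sum_i m_{ij}\alpha_i + c_j\beta_j$. I would try to combine these identities with the second-order condition along $2\times 2$ exchange directions (a quadratic with leading coefficient $2\per$ of a complementary minor) to show the following. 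As $M$ approaches the zero-permanent face, the constraint $\sum_{j\geq2} c_j$ close to $n-2$ forces row $1$ of $M$ to concentrate on column $1$, while the minor $M^{(1,1)}$ stays bounded below, roughly like $\big(\frac{1-t}{n-1}\big)^{n-1}$. The Lagrange equations for $j \geq 2$ then conflict with $\per(M^{(1,j)}) \to 0$ (or force some $\beta_j$ to be negative in a way incompatible with positivity of $M$). If this direct route stalls, the fallback is a perturbative argument: restrict to matrices near the limiting face and exhibit an explicit feasible descent direction moving mass in row $1$ from columns $j \geq 2$ to column $1$, compensated in other rows. The descent would be shown to continue strictly decreasing $\per$ until a zero entry appears, which places the minimizer on the relative boundary.
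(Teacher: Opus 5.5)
Your Step~1 is sound: the Frobenius--K\"onig count is a cleaner, exact substitute for the paper's appeal to the capacity bound \Cref{thm:rs-cap-bound}. Your remark that $n\le 2$ must be excluded is also correct. But Step~3, which is the whole content of the proposition, is not a proof. You list identities and possible strategies without ever showing that the minimum cannot be attained in the relative interior. The heuristics there are also off. A relative interior point need not have row $1$ concentrated on column $1$; the interior matrix $\frac1n\bm1\bm c^\top$ spreads column $1$ evenly. The degenerate face you aim at in Step~2 is also misidentified: $S=\{1\}$, $T=\{2,\ldots,n\}$ has $|S|+|T|=n$, so that zero block does not kill the permanent. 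The zero-permanent matrices of the limiting polytope are those with \emph{two} rows equal to $e_1$. Your conclusion that $\min\per\to0$ as $t\to\frac1{n-1}$ is nevertheless true, since such matrices exist in the limit.

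The missing idea is that no quantitative analysis of interior critical points is needed. The permanent is a symmetric multilinear form in the rows $r_1,\ldots,r_n$. The feasible set $\{r_i\ge 0,\ \langle r_i,\bm1\rangle=1,\ \sum_i r_i=\bm c\}$ is compact, convex and invariant under permuting rows, so \Cref{thm:diagonal-minimizers} applies. Freezing all but two rows leaves a quadratic form. By \Cref{lem:at-most-one-min}, it has a unique minimizer on that slice when no global minimizer lies on the relative boundary, and the row swap then forces the two rows to coincide. Hence if no minimizer lay on the relative boundary, the only minimizer would be $\frac1n\bm1\bm c^\top$. So it suffices to exhibit a single feasible matrix with permanent below $\frac{n!}{n^n}\prod_j c_j$. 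The paper's proof is precisely such a comparison, using a bidiagonal matrix of permanent $n!\epsilon^n$ in size $n+1$ with $\bm c=(2-\epsilon,1-\epsilon,\ldots,1-\epsilon,n\epsilon)$. With this reduction your own family closes for every $n\ge 3$. The rank-one value $\frac{n!}{n^n}(1+(n-1)t)(1-t)^{n-1}$ tends to $\frac{2\,n!}{n^n}\big(\frac{n-2}{n-1}\big)^{n-1}>0$. Meanwhile, for $s=\frac1{n-1}-t$, take the matrix whose first two rows are $\big(1-\frac{(n-1)s}{2},\frac s2,\ldots,\frac s2\big)$ and whose remaining rows are $0$ in column $1$ and $\frac1{n-1}$ elsewhere. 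It is feasible and has permanent $O(s)$.
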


The proofs of these results can be found in \Cref{sec:proofs-failure-results}.

\subsection{Open Questions} \label{sec:open}

A number of open questions remain. First of all, there is another version of the rank-one unique minimizer question for the mixed discriminant.

\begin{question}
    For all $n$, does there exist $\epsilon > 0$ such that: for all $\bm{t}$ such that $\|\bm{t}-\bm{1}\|_1 < \epsilon$ and $\sum_{i=1}^n t_i = n$, the mixed discriminant is uniquely minimized by $(X_1,\ldots,X_n) = \frac{1}{n} (t_1 I, \ldots, t_n I)$ over $n$-tuples $(X_1,\ldots,X_n)$ of PSD matrices such that $\tr(X_i) = t_i$ and $\sum_{i=1}^n X_i = I$?
\end{question}

This question is natural because it can generalized to all real stable polynomials, as given in the following more general question.

\begin{question}
    For all $n$, does there exist $\epsilon > 0$ such that: for all $\bm{t}$ such that $\|\bm{t}-\bm{1}\|_1 < \epsilon$ and $\sum_{i=1}^n t_i = n$, the coefficient of $x_1x_2 \cdots x_n$ is uniquely minimized by $p(\bm{x}) = \frac{1}{n^n}\left(\sum_{i=1}^n t_i x_i\right)^n$ over $n$-homogeneous $n$-variate real stable polynomials $p$ such that $p(\bm{1}) = 1$ and $\nabla p(\bm{1}) = \bm{t}$?
\end{question}

Another natural open question here is related to the tightness of capacity bounds on a certain inner product applied to determinantal polynomials.

\begin{question} \label{QP}
    Fix $n,N$ such that $n \leq N$, let $\bm{u}_1,\ldots,\bm{u}_N,\bm{v}_1,\ldots,\bm{v}_N \in \C^n$ be such that $\sum_{i=1}^N \bm{u}_i\bm{u}_i^* \|\bm{v}_i\|_2^2 = \sum_{i=1}^N \bm{v}_i\bm{v}_i^* \|\bm{u}_i\|_2^2 = I$, and define
    \[
        p(\bm{x}) = \det\left(\sum_{i=1}^N x_i \bm{u}_i\bm{u}_i^*\right), \quad q(\bm{x}) = \det\left(\sum_{i=1}^N x_i \bm{v}_i\bm{v}_i^*\right).
    \]
    Do we always have that $\sum_{\bm\alpha} p_{\bm\alpha} q_{\bm\alpha} \geq \frac{n!}{n^n}$, where $p_{\bm\alpha}$ is the $\bm{x}^{\bm\alpha}$ coefficient of $p(\bm{x})$?
\end{question}

An affirmative answer to this question is actually equivalent to Conjecture~6.1 from \cite{Gur06MD} on the quantum permanent (also known as the Cayley hyperdeterminant, or $4$-dimensional Pascal determinant) of doubly stochastic separable PSD block matrices. The left-hand side of the inequality in \Cref{QP} is precisely the quantum permanent of the associated matrix. A weaker (something like $e^{-n}$) version of the bound in \Cref{QP} can be derived from the results of \cite{AOG17}, and a more general version of the conjecture also appears as Remark~3.10 in \cite{Gur09}.

We further note that the quantum permanent of a general doubly stochastic PSD block matrix is known to not satisfy such a bound; in fact, it can even take the value $0$. That said, efficiently algorithmically determining precisely when it is $0$ is an open problem \cite{GargGurvitsOliveiraWigderson2020,Gurvits2004} whose resolution would yield efficient algorithms for a version of the famous PIT (polynomial identity testing) problem.

Finally, there is quite a bit of literature (e.g. \cite{Foregger1980,Minc1984,Brualdi1985,Hwang1985,ForeggerSinkhorn1986,SongHongJunKim1997,PulaSongWanless2011,SongBeasley2024} etc.) on minimizers of the permanent over faces of the Birkhoff polytope. For example, it is open for many classes of faces of the Birkhoff polytope whether or not the permanent is minimized in its relative interior (such faces are called \textbf{cohesive} in the literature; see the survey \cite{CheonWanless2005}). We briefly tried to apply techniques from this paper to various classes of such faces, but we could not immediately obtain anything new. A natural question is then whether or not this is possible in some cases, and how such statements would generalize to matrices with column sums $\bm{1}$ and row sums $\approx \bm{1}$.

\subsection{AI Declaration}

We note that the literature review for the last paragraph of \Cref{sec:open}, along with some testing to see if our results could apply in the context of cohesive faces of the Birkhoff polytope, was completed with the help of AI (ChatGPT 5.6 Sol). The rest of this paper was created by the authors without the use of any AI tool, except perhaps for creating some .bib file entries.

\section{Preliminaries} \label{sec:prelims}

Throughout we use standard vector and set notation. We let $\C,\R,\Z$ denote the complex, real, and integer numbers, respectively. We use a subscript to modify these sets; for example $\Z_{\geq 0}$ denote the non-negative integers. We also use other sensible notation; e.g. we let $\R^{n \times n}$ denote the set of $n \times n$ matrices with reel entries. Also, given $\bm{x},\bm{y} \in \R^n$ we use $\bm{x} \leq \bm{y}$ to denote $x_i \leq y_i$ for all $i \in [n]$, we denote $\bm{x} \odot \bm{y} = (x_1y_1,\ldots,x_ny_n)$, we denote $\bm{x}^{\bm{y}} = x_1^{y_1} \cdots x_n^{y_n}$ (whenever $\bm{x} > \bm{0}$), we denote $\|\bm{x}\|_1 = \sum_{i=1}^n |x_i|$, and we denote $\langle \bm{x}, \bm{y} \rangle = \sum_{i=1}^n x_i y_i$.

We will make use of a number of shorthands for various spaces throughout. Specifically, we define certain matrix spaces:
\begin{itemize}
    \item $\Mat_n(\bm{c}) = \left\{M \in \R_{\geq 0}^{n \times n} : M\bm{1} = \bm{1}, ~ M^\top\bm{1} = \bm{c}\right\}$,
    \item $\PSD_n = $ the set of $n \times n$ positive semidefinite matrices,
    \item $\MatTup_n(C) = \left\{(M_1,\ldots,M_n) \in \PSD_n^n : \tr(M_i) = 1 ~ \forall i, ~ \sum_{i=1}^n M_i = C\right\}$.
    % \item $\HStab_n(\bm{c}) = \left\{p \in \R_{\geq 0}[x_1,\ldots,x_n]\right\}$
\end{itemize}

\subsection{Log-concave Polynomials} \label{sec:lc-polys}

Here we give some definitions and useful notation, before recalling a few results we will make use of.

\begin{definition}
    An $n$-homogeneous polynomial $p \in \R_{\geq 0}[x_1,\ldots,x_n]$ is \textbf{real stable} if $p(z_1,\ldots,z_n) \neq 0$ for all $z_1,\ldots,z_n$ in the (open) complex upper half-plane.
\end{definition}

\begin{definition}
    An $n$-homogeneous polynomial $p \in \R_{\geq 0}[x_1,\ldots,x_n]$ is \textbf{strongly log-concave (SLC)} if $\partial_{x_1}^{\alpha_1} \cdots \partial_{x_n}^{\alpha_n} p(\bm{x})$ is either log-concave on $\R_{>0}^n$ or identically zero for all $\bm\alpha \in \Z_{\geq 0}^n$.
\end{definition}

\begin{example}
    For all $A_1,\ldots,A_n \in \PSD_d$, the polynomial $p(\bm{x}) = \det\left(\sum_{i=1}^n x_i A_i\right)$ is real stable. By considering diagonal matrices, this implies products of linear forms with non-negative coefficients are also real stable. Finally, for all compact convex bodies $K_1,\ldots,K_n \subseteq \R^d$, the polynomial $p(\bm{x}) = \mathrm{vol}\left(\sum_{i=1}^n x_i K_i\right)$ is strongly log-concave, but not necessarily real stable.
\end{example}

We will not explicitly use the following, but it is a well-known fact in the literature.

\begin{proposition}
    Every homogeneous real stable polynomial is strongly log-concave.
\end{proposition}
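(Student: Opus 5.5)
The plan is to reduce strong log-concavity to two facts: real stability is preserved by taking partial derivatives, and a homogeneous real stable polynomial with non-negative coefficients is log-concave on $\R_{>0}^n$. Together these give the claim. Each $\partial_{x_1}^{\alpha_1}\cdots\partial_{x_n}^{\alpha_n}p$ is then either identically zero or a homogeneous real stable polynomial with non-negative coefficients, and hence log-concave on the positive orthant.

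\emph{Step 1 (closure under differentiation).} By induction it suffices to treat a single derivative $\partial_{x_i}p$. Fix $z_j$ in the upper half-plane for $j\neq i$. The univariate polynomial $q(z_i)=p(z_1,\ldots,z_n)$ has no roots in the upper half-plane. It is not identically zero, since otherwise $p$ would vanish on an open set. Consider two cases.
\begin{enumerate}
\item If $q$ is constant, then $q'\equiv 0$. The usual limiting argument (Hurwitz's theorem) shows that $\partial_{x_i}p$ is then either identically zero or still stable.
\item If $q$ is non-constant, Gauss--Lucas places the roots of $q'$ in the convex hull of the roots of $q$. That hull lies in the closed lower half-plane, so $q'(z_i)\neq 0$ for $z_i$ in the open upper half-plane.
\end{enumerate}
Homogeneity and non-negativity of the coefficients are clearly preserved by differentiation.

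\emph{Step 2 (log-concavity).} Fix $\bm{x}\in\R_{>0}^n$ and an arbitrary direction $\bm{v}\in\R^n$. We must show $t\mapsto \log p(\bm{x}+t\bm{v})$ is concave near $t=0$. The key point is that $g(t)=p(\bm{x}+t\bm{v})$ is a real-rooted polynomial in $t$. To see this, suppose $g(t_0)=0$ with $\operatorname{Im}t_0\neq 0$; replacing $t_0$ by $\overline{t_0}$ (real coefficients) we may assume $\operatorname{Im} t_0<0$. By homogeneity of degree $d$,
\[
0=g(t_0)=p(\bm{x}+t_0\bm{v}) = t_0^{d}\, p\big(\bm{v}+t_0^{-1}\bm{x}\big).
\]
Every coordinate of $\bm{v}+t_0^{-1}\bm{x}$ has imaginary part $x_j\operatorname{Im}(t_0^{-1})>0$, so this contradicts stability. (Here $t_0\neq 0$, since $g(0)=p(\bm{x})>0$, as $p\neq 0$ has non-negative coefficients.)

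Write $g(t)=c\prod_k (t-r_k)$ with all $r_k$ real and $g(0)>0$. Then on an interval around $0$ free of roots,
\[
(\log g)''(t) = -\sum_k \frac{1}{(t-r_k)^2}\le 0.
\]
Hence the Hessian of $\log p$ is negative semidefinite at every point of $\R_{>0}^n$, which is log-concavity.

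\emph{Main obstacle.} The only delicate point is Step 1 in the degenerate case, where a derivative might be nonzero yet fail to be stable. This is handled by approximating $p$ by $p(\bm{x})+\varepsilon x_i^{d}$-type perturbations, or by invoking Hurwitz's theorem, and then passing to the limit. Step 2 is a direct computation once real-rootedness along lines through the positive orthant is established.
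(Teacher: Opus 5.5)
The paper gives no proof of this proposition; it is quoted as a well-known fact. The paper also lists closure of real stability under partial derivatives among the basic facts it takes from the literature. Your overall structure is the standard one. Step 2 is complete and correct: the identity $p(\bm{x}+t_0\bm{v}) = t_0^d\,p(\bm{v}+t_0^{-1}\bm{x})$ with $\operatorname{Im}(t_0^{-1})>0$ gives real-rootedness along every line through a point of $\R_{>0}^n$. The formula $(\log g)''(0) = -\sum_k r_k^{-2}\le 0$ then gives the negative semidefinite Hessian.

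The weak point is Case 1 of Step 1, which you only gesture at. One of the fixes you suggest is wrong: adding $\varepsilon x_i^d$ does not preserve stability. For example, $x_1^2$ is stable, but $x_1^2+\varepsilon x_2^2$ vanishes at $(\sqrt{\varepsilon}(-1+i),\,1+i)$, where both coordinates lie in the open upper half-plane $\mathcal{H}$. Nor can the case be dismissed for free. Pointwise Gauss--Lucas only shows $\partial_{x_i}p\neq 0$ where $q$ is non-constant, and a priori $\partial_{x_i}p$ might vanish exactly where $q$ is constant.

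A correct repair is to show Case 1 never occurs unless $\partial_{x_i}p\equiv 0$:
\begin{enumerate}
\item Write $p=\sum_{k=0}^{d_i} x_i^k p_k(\bm{x}_{-i})$ with $p_{d_i}\not\equiv 0$.
\item The functions $\lambda^{-d_i}p(\bm{z}_{-i},\lambda z_i)$ are zero-free on $\mathcal{H}^n$ for $\lambda>0$. As $\lambda\to\infty$ they converge locally uniformly to $z_i^{d_i}p_{d_i}(\bm{z}_{-i})$, which is not identically zero.
\item By Hurwitz, $p_{d_i}$ has no zeros on $\mathcal{H}^{n-1}$.
\item Hence if $\partial_{x_i}p\not\equiv 0$, i.e.\ $d_i\ge 1$, then $q$ has degree exactly $d_i\ge 1$ for every $\bm{z}_{-i}\in\mathcal{H}^{n-1}$. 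Case 1 never arises, and Gauss--Lucas (Case 2) finishes.
\end{enumerate}

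Alternatively, you can avoid the case split altogether. For $s>0$, $q+sq'$ has no zeros in $\mathcal{H}$. This is trivial if $q$ is constant; otherwise $q+sq'=q\,(1+s\,q'/q)$ and $\operatorname{Im}(q'/q)(z_i)=\operatorname{Im}\sum_k (z_i-r_k)^{-1}<0$ there. So $s^{-1}p+\partial_{x_i}p$ is stable for every $s>0$. Letting $s\to\infty$, Hurwitz shows $\partial_{x_i}p$ is stable or identically zero.

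Or you may simply cite the closure lemma, as the paper does.
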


We note that strongly log-concave is equivalent to \textbf{Lorentzian} \cite{BH20} and \textbf{completely log-concave} \cite{ALOGV18iii} for homogeneous polynomials. That said, we also give one other equivalent definition of strongly log-concave which relies upon the following folklore result in convex analysis.

\begin{lemma}
    If $p \in \R[x_1,\ldots,x_n]$ is $d$-homogeneous and $\bm{x} \in \R^n$ is such that $p(\bm{x}) > 0$, then the Hessian of $\log p$ at $\bm{x}$ is negative semidefinite if and only if the Hessian of $p^{1/d}$ at $\bm{x}$ is negative semidefinite.
\end{lemma}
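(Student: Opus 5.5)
We will work on the open cone where $p(\bm{x})>0$ and compute both Hessians in terms of the derivatives of $p$. Write $g = \nabla p(\bm{x})$ and $H = \nabla^2 p(\bm{x})$. Direct differentiation gives
\[
    \nabla^2 \log p = \frac{1}{p} H - \frac{1}{p^2} g g^\top,
\]
and
\[
    \nabla^2 p^{1/d} = \frac{1}{d} p^{1/d - 1}\left( H - \Bigl(1-\tfrac{1}{d}\Bigr)\frac{1}{p} g g^\top \right).
\]
Up to the positive factors $p^{-1}$ and $\frac{1}{d}p^{1/d-1}$, the two quadratic forms evaluated on a vector $\bm{v}$ are therefore
\[
    A(\bm{v}) = p\,\bm{v}^\top H\bm{v} - \langle g,\bm{v}\rangle^2
\]
and
\[
    B(\bm{v}) = p\,\bm{v}^\top H\bm{v} - \Bigl(1-\tfrac1d\Bigr)\langle g,\bm{v}\rangle^2 .
\]
Since $B(\bm{v}) \geq A(\bm{v})$ for every $\bm{v}$, negative semidefiniteness of $\nabla^2 p^{1/d}$ immediately implies that of $\nabla^2\log p$. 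This is the easy direction.

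For the converse we use homogeneity through Euler's identities: $\langle g, \bm{x}\rangle = d\,p$ and $H\bm{x} = (d-1) g$, so $\bm{x}^\top H \bm{x} = d(d-1)p$. We decompose an arbitrary $\bm{v}$ as $\bm{v} = \bm{w} + t\bm{x}$, where $\langle g,\bm{w}\rangle = 0$. This is possible because $\langle g,\bm{x}\rangle = dp \neq 0$. Expanding $B$ with these identities gives:
\begin{itemize}
    \item the cross term $2t\,p\,\bm{w}^\top H\bm{x} = 2t(d-1)p\langle g,\bm{w}\rangle = 0$;
    \item the $t^2$ term $t^2 p\, d(d-1)p - t^2\bigl(1-\frac1d\bigr) d^2p^2 = 0$;
    \item the remaining term $p\,\bm{w}^\top H \bm{w}$.
\end{itemize}
Thus $B(\bm{v}) = p\,\bm{w}^\top H\bm{w}$. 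Because $\langle g,\bm{w}\rangle = 0$, we also have $A(\bm{w}) = p\,\bm{w}^\top H\bm{w}$. Hence if $A \preceq 0$, then $B(\bm{v}) = A(\bm{w}) \le 0$ for all $\bm{v}$, which proves the converse.

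The only delicate step is choosing the decomposition adapted to Euler's relations. The point is that $B$ vanishes identically along the direction $\bm{x}$ (reflecting that $p^{1/d}$ is $1$-homogeneous, hence linear along rays) and that this direction is $H$-orthogonal to $g^\perp$. The argument also needs $d \ge 1$, so that $d \neq 0$ and $1 - \frac1d \ge 0$; the case $d = 0$ is trivial since $p$ is constant. The rest is routine algebra, and the argument works for any $\bm{x}\in\R^n$ with $p(\bm{x})>0$, not only positive vectors.
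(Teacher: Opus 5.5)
Your proof is correct. The paper gives no proof of this lemma; it only cites it as a folklore fact in convex analysis, so there is no argument to compare yours against. Your argument is the standard one and fully supplies the missing proof:
\begin{itemize}
\item the easy direction follows from $B(\bm{v}) - A(\bm{v}) = \frac{1}{d}\langle g,\bm{v}\rangle^2 \geq 0$;
\item for the converse, you split $\bm{v} = \bm{w} + t\bm{x}$ with $\langle g,\bm{w}\rangle = 0$. The Euler identities $H\bm{x} = (d-1)g$ and $\langle g,\bm{x}\rangle = dp$ then kill the cross and radial terms of $B$, which gives $B(\bm{v}) = A(\bm{w})$.
\end{itemize}

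One harmless slip: the positive factors are $p^{-2}$ and $\frac{1}{d}p^{1/d-2}$, not $p^{-1}$ and $\frac{1}{d}p^{1/d-1}$. That is, $\bm{v}^\top \nabla^2 \log p \, \bm{v} = p^{-2} A(\bm{v})$ and $\bm{v}^\top \nabla^2 p^{1/d} \, \bm{v} = \frac{1}{d} p^{1/d-2} B(\bm{v})$. Only the positivity of these factors is used, so nothing in the argument changes.
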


This leads to the following alternative definition of strongly log-concave polynomials.

\begin{corollary} \label{alt-slc-def-1/d}
    If $p \in \R_{\geq 0}[x_1,\ldots,x_n]$ is $d$-homogeneous, then $p$ is log-concave on $\R_{>0}^n$ if and only if $p^{1/d}$ is concave on $\R_{>0}^n$.
\end{corollary}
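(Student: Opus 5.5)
We would derive \Cref{alt-slc-def-1/d} directly from the preceding folklore lemma, applied pointwise on the open orthant $\R_{>0}^n$. Both conditions are second-order conditions on an open convex set, so it suffices to compare Hessians at each point. Concavity of a $C^2$ function on an open convex set is equivalent to its Hessian being negative semidefinite at every point of that set, and the same holds for log-concavity of a positive function via the Hessian of $\log p$.

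The first step is to dispose of the degenerate cases. If $p \equiv 0$ (or we adopt the convention that the zero polynomial is log-concave), both sides hold trivially. Otherwise $p$ has non-negative coefficients and is not identically zero, so $p(\bm{x}) > 0$ for every $\bm{x} \in \R_{>0}^n$. Indeed, each monomial is strictly positive there and at least one coefficient is strictly positive. Hence $\log p$ and $p^{1/d}$ are both smooth on $\R_{>0}^n$; the latter because $t \mapsto t^{1/d}$ is smooth on $(0,\infty)$.

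Next we apply the folklore lemma at each $\bm{x} \in \R_{>0}^n$. Since $p(\bm{x})>0$, the Hessian of $\log p$ at $\bm{x}$ is negative semidefinite if and only if the Hessian of $p^{1/d}$ at $\bm{x}$ is. Quantifying over all $\bm{x}$ gives
\[
\log p \text{ concave on } \R_{>0}^n \iff \nabla^2 \log p \preceq 0 \text{ on } \R_{>0}^n \iff \nabla^2 p^{1/d} \preceq 0 \text{ on } \R_{>0}^n \iff p^{1/d} \text{ concave on } \R_{>0}^n.
\]
The outer equivalences are the standard second-order characterization of concavity on open convex domains.

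There is no real obstacle here, since the content sits in the lemma we are allowed to assume. The only point needing care is positivity of $p$ on the whole open orthant. This is what lets us invoke the lemma at every point and makes $\log p$ well defined, and it is guaranteed by non-negativity of the coefficients. Without it, for example for general real $p$, the lemma would apply only on $\{p>0\}$, which need not be convex.
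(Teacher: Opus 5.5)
Your proof is correct and is the derivation the paper intends. The paper states the corollary as an immediate consequence of the folklore Hessian lemma, applied pointwise on $\R_{>0}^n$ together with the second-order characterization of concavity on an open convex set. Your explicit checks that $p$ is strictly positive there and that the zero polynomial is a trivial case fill in details the paper leaves implicit.
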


We now state a few basic properties of real stable and SLC polynomials from the literature, which have straightforward proofs. We first need a definition, which describes the Newton's inequalities that hold for real-rooted polynomials.

\begin{definition} \label{def:n-Newton}
    A polynomial $f \in \R_{\geq 0}[t]$ of degree at most $n$ is \textbf{$n$-Newton} if for $f(t) = \sum_{k=0}^n \binom{n}{k} f_k t^k$ we have $f_k^2 \geq f_{k-1} f_{k+1}$ for all $k$.
\end{definition}

\begin{lemma}
    Both the classes of real stable and strongly log-concave polynomials are preserved under taking partial derivatives and under external fields (i.e. $p(\bm{x}) \mapsto \frac{p(\bm{a} \odot \bm{x})}{p(\bm{a})}$ for $\bm{a} \in \R_{>0}^n$).
\end{lemma}

\begin{lemma} \label{slc-univariate-eval}
    Let $p \in \R_{\geq 0}[x_1,\ldots,x_n]$ be $d$-homogeneous, and fix $x_1,\ldots,x_{n-1} \in \R_{\geq 0}$. If $p$ is real stable then $f(t) = p(x_1,\ldots,x_{n-1},t)$ is real-rooted, and if $p$ is strongly log-concave then $f(t) = p(x_1,\ldots,x_{n-1},t)$ is $d$-Newton.
\end{lemma}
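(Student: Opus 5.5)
The two claims are handled separately. In both we restrict to the univariate polynomial $f(t) = p(x_1,\ldots,x_{n-1},t)$, which has non-negative coefficients and degree at most $d$ since $p$ is $d$-homogeneous.

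\emph{Real stable case.} Suppose $f$ is not identically zero and has a non-real root $t_0$; replacing $t_0$ by $\bar t_0$ if necessary, assume $\operatorname{Im} t_0 > 0$. For small $\delta > 0$ and $s > 0$, set
\[
z_j = x_j + \delta i, \quad j<n, \qquad z_n = t_0 + s\delta i .
\]
All these points lie in the open upper half-plane. As $\delta \to 0$, the polynomial $g_\delta(t) = p(x_1+\delta i,\ldots,x_{n-1}+\delta i,t)$ converges coefficientwise to $f$. By Hurwitz's theorem (continuity of roots), $g_\delta$ has a root $t_\delta$ near $t_0$, and for small $\delta$ we have $\operatorname{Im} t_\delta > 0$. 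This contradicts real stability, since $p$ would vanish at a point with all coordinates in the open upper half-plane. If $f \equiv 0$ there is nothing to prove. (The same Hurwitz argument is the standard fact that real stability is preserved under specializing variables to real values; we may simply cite it.)

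\emph{SLC case.} Write $f(t) = \sum_{k=0}^d \binom{d}{k} f_k t^k$. We must show $f_k^2 \ge f_{k-1}f_{k+1}$ for $1 \le k \le d-1$. First,
\[
\partial_t^{k-1} f(t)\big|_{t=0} = \partial_{x_n}^{k-1}p(x_1,\ldots,x_{n-1},0).
\]
Let $q = \partial_{x_n}^{k-1} p$. This is SLC and $(d-k+1)$-homogeneous, and it is either identically zero or log-concave on the positive orthant. Put
\[
h(t) = q(x_1,\ldots,x_{n-1},t) = \sum_{j} \frac{d!}{(d-k+1-j)!\,j!\,\ldots}\ \text{(explicit)}\ ,
\]
whose coefficients of $1,t,t^2$ are proportional to $f_{k-1}, f_k, f_{k+1}$ with binomial factors. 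Concretely, these coefficients are
\[
\frac{d!}{(d-k+1)!}\,\binom{d-k+1}{j} f_{k-1+j}\quad \text{for } j=0,1,2 .
\]

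Log-concavity of $q$ along the line $t \mapsto (x',t)$, with $t>0$ and $x'>0$, makes $h$ log-concave on $(0,\infty)$. Since the coefficients are nonnegative, letting $t \to 0^+$ in
\[
h\,h'' \le (h')^2
\]
gives $2h_0h_2 \le h_1^2$. With the coefficients above, $m = d-k+1$, and $h_j = \binom{m}{j} a_j$, this reads
\[
2\binom{m}{2} a_0 a_2 \le m^2 a_1^2 ,
\]
i.e. $a_1^2 \ge \frac{m-1}{m}\, a_0a_2$. That is weaker than required by the factor $\frac{m-1}{m}$.

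To get the sharp inequality, reduce to degree $2$ by also differentiating in the remaining variables. Take the quadratic form
\[
r = (\bm{x}'\cdot\nabla')^{d-k-1}\,\partial_{x_n}^{k-1} p ,
\]
a directional derivative in the direction $(x_1,\ldots,x_{n-1},0)$. It stays SLC, because SLC is preserved under nonnegative directional derivatives. This is the standard fact, proved by combining partial derivatives with the concavity of $q^{1/m}$ from \Cref{alt-slc-def-1/d}. By Euler-type homogeneity, $r$ evaluated at $(\bm{x}',t)$ is a quadratic in $t$ whose coefficients are $c\,(f_{k-1}, 2f_k, f_{k+1})$ with the same constant $c$. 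A log-concave quadratic form on the orthant has at most one positive eigenvalue. Restricting it to the plane spanned by $(\bm{x}',0)$ and $e_n$ gives the $2\times2$ condition $f_k^2 \ge f_{k-1}f_{k+1}$.

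The main obstacle is exactly this sharp reduction. The crude log-concavity limit at $t\to0$ loses the factor $\frac{m-1}{m}$, so one must reduce to the quadratic case via directional derivatives and use the Lorentzian signature property. I would carry out that reduction carefully, handling the cases where $f_{k\pm1}=0$ or $r \equiv 0$ separately; those cases are trivial.
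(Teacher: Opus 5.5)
The paper does not prove this lemma; it quotes it as a standard fact. So I compare your proposal with the short argument the paper's setup points to.

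\textbf{Real stable case.} This is the standard Hurwitz specialization argument and is fine. The point $z_n=t_0+s\delta i$ is never used and can be dropped.

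\textbf{SLC case.} Your argument is correct once one ingredient is properly cited. The Euler computation $r(\bm{x}',t)=\frac{d!}{2}\bigl(f_{k-1}+2f_kt+f_{k+1}t^2\bigr)$ checks out, and the signature-plus-restriction step does give $f_k^2\geq f_{k-1}f_{k+1}$.

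The ingredient is closure of SLC under nonnegative directional derivatives. This does not follow from ``combining partial derivatives with the concavity of $q^{1/m}$'': a nonnegative combination $\sum_i u_i\partial_i q$ of log-concave functions need not be log-concave. It is the nontrivial implication SLC $\Rightarrow$ completely log-concave of Br\"and\'en--Huh \cite{BH20}. That proof goes through M-convexity of supports and closure of Lorentzian polynomials under nonnegative linear substitutions. Citing it is legitimate, since the paper mentions this equivalence, but cite it rather than sketch a derivation that does not work.

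\textbf{The ``main obstacle'' is not real.} You lost the factor $\frac{m-1}{m}$ because you restricted $\log q$ to a line instead of restricting $q^{1/m}$. If $q=\partial_{x_n}^{k-1}p\equiv 0$ there is nothing to prove. Otherwise $q$ is $m$-homogeneous and log-concave, so \Cref{alt-slc-def-1/d} makes $q^{1/m}$ concave on $\R_{>0}^n$. Hence, for $\bm{x}'>\bm{0}$, the function $h^{1/m}$ is concave on $(0,\infty)$, that is,
\[
    h(t)\,h''(t) \leq \frac{m-1}{m}\, h'(t)^2 \qquad \text{for all } t>0.
\]
Let $t\to 0^+$ and substitute your own coefficients $h_j=\frac{d!}{m!}\binom{m}{j}f_{k-1+j}$. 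This gives $m(m-1)f_{k-1}f_{k+1}\leq m(m-1)f_k^2$, which is exactly the sharp inequality, since $m=d-k+1\geq 2$ for $k\leq d-1$. The case of $\bm{x}'$ on the boundary of the orthant follows by continuity of the $f_j$ in $\bm{x}'$.

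\textbf{Comparison.} The argument above uses only the SLC definition and the corollary stated just before the lemma. It is presumably the ``straightforward proof'' the paper has in mind. Your route is essentially the Lorentzian proof: the bivariate restriction $p(s\bm{x}',t)$ is Lorentzian, hence has ultra log-concave coefficients. That route fits the Lorentzian theory uniformly, but it imports a much deeper theorem than this statement needs.
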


Finally, we generalize some of the notation use for matrices above to polynomials. Specifically we define the following spaces of polynomials:
\begin{itemize}
    \item $\HStab_n(\bm{c})$ is the set of all $n$-variate $n$-homogeneous real stable polynomials $p$ such that $p(\bm{1}) = 1$ and $\nabla p(\bm{1}) = \bm{c}$, and
    \item $\SLC_n(\bm{c})$ is the set of all $n$-variate $n$-homogeneous strongly log-concave polynomials $p$ such that $p(\bm{1}) = 1$ and $\nabla p(\bm{1}) = \bm{c}$.
\end{itemize}
We will also call a homogeneous polynomial $p$ \textbf{doubly stochastic} if $p(\bm{1}) = 1$ and $\nabla p(\bm{1}) = \bm{1}$. Note that this implies $p$ is $n$-variate and $n$-homogeneous.

\subsection{Polynomial Capacity Bounds}

In this section, we recall various results from the literature on polynomial capacity. Given $p \in \R_{\geq 0}[x_1,\ldots,x_n]$ and $\bm\alpha \in \R_{\geq 0}^n$, the \textbf{$\bm\alpha$-capacity} of $p$ is defined as
\[
    \cpc_{\bm\alpha}(p) := \inf_{\bm{x} \in \R_{>0}^n} \frac{p(\bm{x})}{\bm{x}^{\bm\alpha}}.
\]
We often just say \textbf{capacity} when $\bm\alpha$ is understood from context. Polynomial capacity has been used in various contexts to lower-bound combinatorial and probabilistic quantities which are hard to compute. We now recall some of these bounds, which quantifies the difference between the capacity of a polynomial and its coefficients.

\begin{theorem}[\cite{Gur09}] \label{thm:Gurvits-SLC}
    Let $p \in \R_{\geq 0}[x_1,\ldots,x_n]$ be a $d$-homogeneous strongly log-concave polynomial. Fix $\bm\alpha \in \R_{\geq 0}^n$ such that $\sum_i \alpha_i = d$. Then
    \[
        \frac{p_{\bm\alpha}}{\cpc_{\bm\alpha}(p)} \geq \binom{d}{\bm\alpha} \frac{\alpha_1^{\alpha_1} \cdots \alpha_n^{\alpha_n}}{d^d},
    \]
    where $p_{\bm\alpha}$ is the coefficient of $\bm{x}^{\bm\alpha}$ in $p$.
\end{theorem}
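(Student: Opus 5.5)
We argue by induction on the number of variables $n$, the case $n=1$ being trivial: then $p = x_1^d$ and $p_{\bm\alpha}/\cpc_{\bm\alpha}(p) = 1$. This is Gurvits' coefficient-by-coefficient scheme: we peel off one variable at a time, and at each step the loss is controlled by a univariate Newton inequality (\Cref{slc-univariate-eval}).

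\textbf{Reduction.} Since both sides scale linearly in $p$, and capacity and coefficient behave compatibly under external fields, we may assume the infimum defining $\cpc_{\bm\alpha}(p)$ is approached along $\bm{x} = \bm{1}$, i.e. $\cpc_{\bm\alpha}(p) = p(\bm 1)$ with $\nabla p(\bm 1) = \bm\alpha$. If the infimum is not attained, we perturb and pass to a limit; if some $\alpha_i = 0$, we set $x_i = 0$ and drop that variable, which preserves strong log-concavity and only reduces $n$.

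\textbf{Peeling off $x_n$.} Let $k = \alpha_n$ and define
\[
q = \frac{1}{k!}\,\partial_{x_n}^{k} p \big|_{x_n=0},
\]
an SLC polynomial in $x_1,\ldots,x_{n-1}$ of degree $d-k$ whose $\bm\alpha' = (\alpha_1,\ldots,\alpha_{n-1})$ coefficient equals $p_{\bm\alpha}$. By induction,
\[
q_{\bm\alpha'} \geq \binom{d-k}{\bm\alpha'} \frac{\prod_{i<n} \alpha_i^{\alpha_i}}{(d-k)^{d-k}}\, \cpc_{\bm\alpha'}(q).
\]
It remains to show
\[
\cpc_{\bm\alpha'}(q) \geq \binom{d}{k} \frac{k^k (d-k)^{d-k}}{d^d}\, \cpc_{\bm\alpha}(p),
\]
since multiplying the two inequalities gives exactly the claimed constant.

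\textbf{The main step.} Fix $\bm{y} \in \R_{>0}^{n-1}$ and set $f(t) = p(\bm y, t) = \sum_j \binom{d}{j} f_j t^j$, which is $d$-Newton by \Cref{slc-univariate-eval}. Then $q(\bm y) = \binom{d}{k} f_k$, and $f(t) \geq \cpc_{\bm\alpha}(p)\, \bm{y}^{\bm\alpha'} t^k$ for all $t > 0$. We need the univariate lemma
\[
\binom{d}{k} f_k \geq \binom{d}{k}\frac{k^k(d-k)^{d-k}}{d^d} \inf_{t>0} \frac{f(t)}{t^k}.
\]
Log-concavity of $(f_j)$ gives $f_j \leq f_k\, r^{j-k}$ with $r = f_{k+1}/f_k$ for $j \geq k$, and similarly with $s = f_k/f_{k-1}$ for $j \leq k$. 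Hence $f$ is dominated by $f_k$ times a combination of geometric tails. Taking $t$ near $(\text{ratio})^{-1}$, in the binomial-extremal case $f(t) = f_k (1 + rt)^d$ this yields $\inf_t f(t)/t^k \leq f_k\, d^d/(k^k(d-k)^{d-k})$.

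The main obstacle is this univariate bound, because the ratios $r$ and $s$ may differ; the monotone ratio sequence $f_{j+1}/f_j$ must be handled as a whole. We would first prove the stronger statement that, among $d$-Newton $f$ with fixed $f_k$ and fixed capacity, the extremal one is $(a + bt)^d$. The tool is a standard majorization/convexity argument comparing $\sum_j \binom{d}{j}(\text{log-concave}) t^j$ with $(a+bt)^d$ at the critical point. Non-integer $\alpha_n$ is then handled by interpreting the binomial constants via Gamma functions, or by restricting to integer $\bm\alpha$ as needed for coefficients.
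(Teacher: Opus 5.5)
Your architecture is right. It is the variable-by-variable derivative-capacity induction that the paper attributes to Gurvits; for $\bm\alpha=\bm 1$ it is exactly the chain in Step~1 of \Cref{subsec:proof-SLC-unique-min}, built from \Cref{Gurvits-strengthening}. Your bookkeeping is also correct: $q_{\bm\alpha'}=p_{\bm\alpha}$, $q(\bm y)=\binom{d}{k}f_k$, $f(t)\ge \cpc_{\bm\alpha}(p)\,\bm y^{\bm\alpha'}t^k$, and $\binom{d-k}{\bm\alpha'}\binom{d}{k}=\binom{d}{\bm\alpha}$. (Your ``Reduction'' paragraph is never used and can be deleted.)

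The gap is the one step that carries content. You state the univariate inequality $f_k\ge \frac{k^k(d-k)^{d-k}}{d^d}\inf_{t>0}f(t)/t^k$, call it the main obstacle because $r\neq s$, and defer it to an unspecified majorization/convexity argument for a stronger extremality claim. No such argument is needed. The missing observation is that the log-concavity you already invoked gives $r=f_{k+1}/f_k\le f_k/f_{k-1}=s$. So any single ratio $\rho\in[r,s]$ dominates on both sides at once: $f_j\le f_k\rho^{j-k}$ for every $j$. Hence
\[
f(t)\le f_k\,\rho^{-k}(1+\rho t)^d .
\]
Evaluating at $t=\frac{k}{(d-k)\rho}$ (not $t\approx\rho^{-1}$) gives $\inf_{t>0}f(t)/t^k\le f_k\,\frac{d^d}{k^k(d-k)^{d-k}}$. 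The cases $k=0$ and $k=d$ follow by letting $t\to 0$ or $t\to\infty$. This is the general-$k$ analogue of the domination $f\le\mathrm{trunc}_d[(1+\frac{\partial_t f(0)}{n}t)^n]$ in the proof of \Cref{univar-Newton-cap}.

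Three smaller points also need repair.

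\emph{Internal zeros.} The bounds $f_j\le f_k r^{j-k}$ and $f_j\le f_k s^{j-k}$ require $(f_j)$ to have no internal zeros. The bare inequalities $f_j^2\ge f_{j-1}f_{j+1}$ supplied by \Cref{slc-univariate-eval} do not guarantee this. For example, $f(t)=1+10t+Mt^4$ with $d=4$ satisfies them, yet $f_1\ge\frac{27}{256}\inf_t f(t)/t$ fails for large $M$. You must use that $f=p(\bm y,\cdot)$ comes from an SLC polynomial, whose support is M-convex, so the $x_n$-degrees occurring in $p$ form an interval. This also settles the case $f_k=0$, since then $\inf_t f(t)/t^k=0$.

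\emph{SLC of $q$.} Say why $q$ is SLC: $\partial_{x_n}^k p$ is SLC, and setting $x_n=0$ is a limit of the external fields $x_n\mapsto\epsilon x_n$.

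\emph{Integrality of $\bm\alpha$.} The statement should be read with $\bm\alpha\in\Z_{\ge0}^n$. For non-integer $\bm\alpha$ one has $p_{\bm\alpha}=0$ while $\cpc_{\bm\alpha}(p)$ can be positive, e.g.\ $p=(x_1+x_2)^2$ and $\bm\alpha=(\frac12,\frac32)$. No Gamma-function reinterpretation rescues this case, so simply restrict to integer $\bm\alpha$.
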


\begin{theorem}[{\cite[Thm. 5.1]{Gur15}}] \label{thm:Gurvits}
    Let $p \in \R_{\geq 0}[x_1,\ldots,x_n]$ be a $d$-homogeneous real stable polynomial. Fix $\bm\alpha \in \R_{\geq 0}^n$ such that $\sum_i \alpha_i = d$, and let $d_i$ be the degree of $x_i$ in
    \[
        \left.\partial_{x_{i+1}}^{\alpha_{i+1}} \cdots \partial_{x_n}^{\alpha_n} p\right|_{x_{i+1} = \cdots = x_n = 0}
    \]
    for $i = 1,\ldots,n$. Then we have
    \[
        \frac{p_{\bm\alpha}}{\cpc_{\bm\alpha}(p)} \geq \prod_{i=2}^n \binom{d_i}{\alpha_i} \frac{\alpha_i^{\alpha_i} (d_i-\alpha_i)^{d_i-\alpha_i}}{d_i^{d_i}},
    \]
    where $p_{\bm\alpha}$ is the coefficient of $\bm{x}^{\bm\alpha}$ in $p$.
\end{theorem}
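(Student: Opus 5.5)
The plan is to induct on the number of variables, peeling off one variable at a time starting from $x_n$. At each step we reduce to a univariate statement about real-rooted polynomials. Set $c(m,k) := \binom{m}{k}\frac{k^k (m-k)^{m-k}}{m^m}$.

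\textbf{Univariate lemma.} Let $f \in \R_{\geq 0}[t]$ be real-rooted of degree at most $m$, and let $0 \le k \le m$. Then
\[
    f_k \;\geq\; c(m,k)\,\inf_{t>0} \frac{f(t)}{t^k},
\]
where $f_k$ is the $t^k$ coefficient.

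To prove it, write $f(t) = a\prod_{j=1}^m (1+b_j t)$ with $b_j \ge 0$, padding with $b_j = 0$ if the degree is less than $m$. Roots at $0$ can be handled by a limiting argument, or by factoring out powers of $t$, which only helps. Then $f_k = a\,e_k(\bm b)$, and we must show
\[
    e_k(\bm b) \geq c(m,k) \inf_{t>0} t^{-k}\prod_j(1+b_jt).
\]
Evaluate the infimum at the specific point $t_0 = \frac{k}{(m-k)\,\bar b}$, where $\bar b$ is a suitable mean of the $b_j$ chosen so that $e_k(\bm b) = \binom{m}{k}\bar b^k$. Then combine Maclaurin's inequalities with AM--GM applied to $\prod_j (1+b_j t_0)$. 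The extremal case is $b_j$ all equal, i.e. $f = (1+t)^m$. In that case
\[
    \inf_{t>0} \frac{(1+t)^m}{t^k} = \frac{m^m}{k^k(m-k)^{m-k}},
\]
and equality holds. I expect this lemma to be the main technical point. If the direct Maclaurin/AM--GM route gets messy, I would fall back on a symmetrization argument. Fix the value $\log\prod_j(1+b_jt)$ at the minimizing $t$ and show that $e_k$ is minimized when the $b_j$ are equal, using Schur-concavity of $e_k^{1/k}$ along the constraint surface.

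\textbf{Inductive step.} Define
\[
    q(x_1,\ldots,x_{n-1}) = \frac{1}{\alpha_n!}\,\partial_{x_n}^{\alpha_n} p\big|_{x_n=0}.
\]
Since derivatives and specialization to $0$ preserve real stability, $q$ is real stable and $(d-\alpha_n)$-homogeneous. Its $\bm x^{(\alpha_1,\ldots,\alpha_{n-1})}$ coefficient equals $p_{\bm\alpha}$.

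Fix $x_1,\ldots,x_{n-1} > 0$. By \Cref{slc-univariate-eval}, $f(t) = p(x_1,\ldots,x_{n-1},t)$ is real-rooted with nonnegative coefficients, and it has degree at most $d_n$, the degree of $x_n$ in $p$. Its $t^{\alpha_n}$ coefficient is exactly $q(x_1,\ldots,x_{n-1})$. The univariate lemma then gives
\[
    q(x_1,\ldots,x_{n-1}) \geq c(d_n,\alpha_n)\inf_{t>0}\frac{p(x_1,\ldots,x_{n-1},t)}{t^{\alpha_n}} \geq c(d_n,\alpha_n)\,\cpc_{\bm\alpha}(p)\, x_1^{\alpha_1}\cdots x_{n-1}^{\alpha_{n-1}}.
\]
Dividing and taking the infimum yields
\[
    \cpc_{(\alpha_1,\ldots,\alpha_{n-1})}(q) \ge c(d_n,\alpha_n)\cpc_{\bm\alpha}(p).
\]
If $\alpha_n > d_n$, then $p_{\bm\alpha}=0$ and $\cpc_{\bm\alpha}(p)=0$ by homogeneity considerations, so this trivial case can be dismissed at the start.

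\textbf{Iteration and conclusion.} Iterate the inductive step on $q$. By construction, the degree of $x_{n-1}$ in $q$ is precisely the $d_{n-1}$ of the statement, and similarly down to $d_2$. After $n-1$ steps we are left with a real stable polynomial in $x_1$ alone, homogeneous of degree $\alpha_1$. It is therefore the monomial $p_{\bm\alpha} x_1^{\alpha_1}$, whose $\alpha_1$-capacity equals $p_{\bm\alpha}$. Chaining the inequalities gives
\[
    p_{\bm\alpha} \geq \prod_{i=2}^n c(d_i,\alpha_i)\cdot \cpc_{\bm\alpha}(p),
\]
which is the claimed bound.
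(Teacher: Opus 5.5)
Your architecture is sound: a univariate coefficient-versus-capacity lemma for real-rooted polynomials, followed by peeling off $x_n,x_{n-1},\ldots,x_2$ via $q=\frac{1}{\alpha_n!}\partial_{x_n}^{\alpha_n}p|_{x_n=0}$, is the standard argument. (The paper only cites this result from Gurvits.) Your inductive step and the bookkeeping of the $d_i$ are fine. The gap is in your proof of the univariate lemma, which carries all the content.

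Write $S_i=e_i(\bm b)/\binom{m}{i}$, $\bar b=S_k^{1/k}$ and $t_0=\frac{k}{(m-k)\bar b}$. Then the inequality $e_k(\bm b)\ge c(m,k)\,t_0^{-k}\prod_j(1+b_jt_0)$ is equivalent to $\prod_j(1+b_jt_0)\le(1+\bar b t_0)^m$. AM--GM only gives $\prod_j(1+b_jt_0)\le(1+S_1t_0)^m$, and Maclaurin gives $S_1\ge\bar b$, so the two point in opposite directions. Worse, the inequality at your $t_0$ is false. Take $m=3$, $k=2$, $f(t)=(1+t)(1+\delta t)^2$. Then $f_2=2\delta+\delta^2$, $\bar b\approx\sqrt{2\delta/3}$, $t_0=2/\bar b$, and $f(t_0)/t_0^2\ge 1/t_0=\bar b/2$. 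So $f_2\ge\frac{4}{9} f(t_0)/t_0^2$ would need $2\delta+\delta^2\ge\frac{2}{9}\bar b$, which fails for small $\delta$. The true minimizer sits near $t=1/\delta$, where $f(t)/t^2\approx 4\delta$.

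Your fallback fails too. With only $\prod_j(1+b_jt)$ fixed, $e_k$ is not minimized at equal $b_j$; for $k\ge 2$ it can be made $0$ by concentrating on one $b_j$. You would have to impose stationarity, $\sum_j\frac{b_jt}{1+b_jt}=k$. The claim then becomes $\Pr[X=k]\ge\Pr[\mathrm{Bin}(m,k/m)=k]$ for a Poisson-binomial $X$ with mean $k$. That is a Hoeffding-type statement which Schur-concavity of $e_k^{1/k}$ does not give you.

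The missing idea is to compare with $(1+\rho t)^m$, where $\rho$ is a ratio of consecutive normalized coefficients rather than a $k$-th root mean. Set $\tilde f_i=f_i/\binom{m}{i}$. Real-rootedness gives Newton's inequalities $\tilde f_i^2\ge\tilde f_{i-1}\tilde f_{i+1}$ with no internal zeros, so $\lambda_i=\tilde f_i/\tilde f_{i-1}$ is non-increasing on the support. Choose $\rho\in[\lambda_{k+1},\lambda_k]\cap(0,\infty)$, which is nonempty unless $f$ is a monomial; for instance $\rho=S_k/S_{k-1}$. Then $\tilde f_i\le\tilde f_k\rho^{i-k}$ for every $i$, hence
\[
f(t)\le\tilde f_k\,\rho^{-k}(1+\rho t)^m\quad\text{coefficientwise}.
\]
Evaluating at $t=\frac{k}{(m-k)\rho}$ gives $\inf_{t>0}f(t)/t^k\le\tilde f_k\frac{m^m}{k^k(m-k)^{m-k}}$, that is, $f_k\ge c(m,k)\inf_{t>0}f(t)/t^k$. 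The cases $k\in\{0,m\}$, $f_k=0$, or $f$ a monomial are trivial. This is the same domination-by-a-power-of-a-linear-form trick the paper uses in \Cref{univar-Newton-cap} for $k=1$.

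With the lemma proved this way, the rest of your argument goes through. One small point: the degenerate case $\alpha_i>d_i$ can occur at any stage, not just $i=n$, and is dismissed the same way.
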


It is straightforward to see that the lower bound of \Cref{thm:Gurvits} is strictly decreasing in $d_i$ for fixed $\alpha_i$. With this, we obtain the following immediate corollary of \Cref{thm:Gurvits}.

\begin{corollary} \label{boundary-cap-bound}
    Let $p \in \R_{\geq 0}[x_1,\ldots,x_n]$ be an $n$-homogeneous real stable polynomial such that $p(\bm{e}_i) = 0$ for some $i \in [n]$. Then
    \[
        \frac{\partial_{x_1} \cdots \partial_{x_n} p(\bm{x})}{\cpc_{\bm{1}}(p)} \geq \frac{(n-2)^{n-2}(n-1)!}{(n-1)^{2n-3}} > \frac{n!}{n^n}.
    \]
\end{corollary}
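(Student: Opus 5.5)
The plan is to apply \Cref{thm:Gurvits} with $\bm\alpha = \bm{1}$ and $d = n$, after ordering the variables so that the distinguished variable comes last. Both the coefficient $\partial_{x_1}\cdots\partial_{x_n} p$ and $\cpc_{\bm 1}(p)$ are invariant under permuting variables, and so are real stability and the hypothesis. So we may assume $p(\bm{e}_n) = 0$. Since $p$ is $n$-homogeneous, $p(\bm e_n)$ is the coefficient of $x_n^n$, so $x_n$ has degree at most $n-1$ in $p$. In the notation of \Cref{thm:Gurvits}, the case $i=n$ involves no derivatives, so $d_n = \deg_{x_n} p \le n-1$. For $i < n$, the polynomial $q_i := \partial_{x_{i+1}}\cdots\partial_{x_n} p|_{x_{i+1}=\cdots=x_n=0}$ is $i$-homogeneous in $x_1,\ldots,x_i$ (or zero), so $d_i \le i$.

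Next, with $\alpha_i = 1$, each factor in \Cref{thm:Gurvits} is
\[
  g(d) := d\cdot\frac{(d-1)^{d-1}}{d^d} = \left(\frac{d-1}{d}\right)^{d-1}.
\]
This is strictly decreasing in $d \ge 1$, as the paper remarks, so we may replace each $d_i$ by its upper bound. We must first rule out $d_i = 0$, where the factor $\binom{0}{1}$ vanishes. If $d_i = 0$ then $q_i$ is free of $x_i$. I would argue this forces $\cpc_{\bm 1}(p) = 0$, which is the standard degenerate case; the statement is then vacuous, or interpreted as trivially true. The reason is that the support of $p$ then cannot contain a point whose convex hull reaches $\bm 1$ in the required face, equivalently $p_{\bm 1}=0$ and the capacity vanishes by the known equivalence for stable polynomials. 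This degenerate case is the only step needing care, and I would dispatch it by citing that $\cpc_{\bm1}(p)>0$ iff $\bm 1$ lies in the Newton polytope, which fails when some $d_i=0$. Assuming $1 \le d_i$, we get
\[
  \frac{p_{\bm 1}}{\cpc_{\bm 1}(p)} \ge \prod_{i=2}^{n-1} \left(\frac{i-1}{i}\right)^{i-1}\cdot\left(\frac{n-2}{n-1}\right)^{n-2}.
\]

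Then the product telescopes. Write $\left(\frac{i-1}{i}\right)^{i-1} = i\cdot\frac{(i-1)^{i-1}}{i^i}$. This gives $\prod_{i=2}^{n-1}\frac{(i-1)^{i-1}}{i^i} = \frac{1}{(n-1)^{n-1}}$ and $\prod_{i=2}^{n-1} i = (n-1)!$. Multiplying by $\frac{(n-2)^{n-2}}{(n-1)^{n-2}}$ yields $\frac{(n-2)^{n-2}(n-1)!}{(n-1)^{2n-3}}$, which is the claimed bound; for $n=2$ use the convention $0^0=1$.

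Finally, for the strict inequality, the same telescoping with last factor $g(n)$ instead of $g(n-1)$ gives $\frac{(n-1)!}{(n-1)^{n-1}}\cdot\frac{(n-1)^{n-1}}{n^{n-1}} = \frac{n!}{n^n}$. Since $g(n-1) > g(n)$ strictly, the claimed bound strictly exceeds $\frac{n!}{n^n}$.
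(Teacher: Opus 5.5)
Your proposal is correct and is essentially the paper's argument. Like the paper, you apply \Cref{thm:Gurvits} with $\bm\alpha=\bm{1}$ and, after moving the distinguished variable last, use $d_i\le i$ for $i<n$ and $d_n\le n-1$; you then invoke the monotonicity of $\left(\frac{d-1}{d}\right)^{d-1}$ and telescope. Your extra handling of the degenerate case $d_i=0$ (equivalent to $p_{\bm{1}}=0$, which the paper passes over) is fine, though \Cref{thm:Gurvits-SLC} dispatches it more cleanly: it gives $\cpc_{\bm{1}}(p)\le\frac{n^n}{n!}\,p_{\bm{1}}=0$ directly, without appealing to the Newton polytope or to saturation of the support of stable polynomials.
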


% The next result is a specific corollary of \Cref{thm:Gurvits} which we will use in proving our main uniqueness of minimizers results.

Thee next result now gives a lower bound on the capacity itself for real stable polynomials under certain conditions on the gradient of the polynomial. This and similar results were originally used to prove \Cref{main-per-unique-min} and to improve the approximation factor for the traveling salesman problem.

\begin{theorem}[\cite{GL21}] \label{thm:rs-cap-bound}
    If $p \in \R[x_1,\ldots,x_n]$ is a $n$-homogeneous real stable polynomial such that $p(\bm{1}) = 1$ and $\|\bm{1}-\nabla p(\bm{1})\|_1 < 2$, then
    \[
        \cpc_{\bm{1}}(p) \geq \left(1 - \frac{\|\bm{1}-\nabla p(\bm{1})\|_1}{2}\right)^n.
    \]
\end{theorem}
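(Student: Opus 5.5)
The plan is to reduce the statement to the doubly stochastic case. There, the classical bound $\cpc_{\bm 1}(p) \ge 1$ follows from weighted AM--GM: writing $p(\bm{x}) = \sum_{\bm\alpha} p_{\bm\alpha}\bm{x}^{\bm\alpha}$ with $\sum_{\bm\alpha} p_{\bm\alpha} = p(\bm 1) = 1$, we get $p(\bm{x}) \ge \bm{x}^{\sum_{\bm\alpha} p_{\bm\alpha}\bm\alpha} = \bm{x}^{\nabla p(\bm 1)}$. Set $\bm{c} = \nabla p(\bm 1)$ and $\delta = \|\bm 1 - \bm c\|_1$. Euler's identity gives $\sum_i c_i = n$, so if $S = \{i : c_i > 1\}$ and $T = \{i : c_i \le 1\}$, then $\sum_{i\in S}(c_i - 1) = \sum_{i \in T}(1-c_i) = \delta/2$. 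The AM--GM bound alone is useless when $\bm c \neq \bm 1$, because $\inf_{\bm x} \bm{x}^{\bm c - \bm 1} = 0$. Real stability must therefore be used to control how badly the ``excess'' variables in $S$ can be exploited.

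The first step is to view $p$ as a hyperbolic polynomial in direction $\bm 1$. For $\bm x > \bm 0$ the univariate polynomial $s \mapsto p(\bm x + s\bm 1)$ is real-rooted, by \Cref{slc-univariate-eval} together with homogeneity. Writing its roots as $-\lambda_1(\bm x),\ldots,-\lambda_n(\bm x)$, we have $\lambda_j(\bm x) \ge 0$, $p(\bm x) = \prod_j \lambda_j(\bm x)$, and $\sum_j \lambda_j(\bm x) = \langle \bm c, \bm x\rangle$. It then suffices to show
\[
\prod_j \lambda_j(\bm x) \;\ge\; \Big(1-\tfrac{\delta}{2}\Big)^n \prod_i x_i \quad \text{for all } \bm x > \bm 0 .
\]
To get this, decompose $\bm c = (1-\tfrac{\delta}{2})\bm 1 + \bm w$ with $\bm w \ge \bm 0$: on $T$ take $w_i = c_i - 1 + \tfrac{\delta}{2} \ge 0$ (using $1 - c_i \le \delta/2$), and on $S$ take $w_i = c_i - 1 + \tfrac{\delta}{2}$ as well. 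Thus $\langle \bm c, \bm x\rangle = (1-\tfrac{\delta}{2})\langle \bm 1, \bm x\rangle + \langle \bm w, \bm x\rangle$.

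The second step is to use the convexity and majorization properties of hyperbolic eigenvalues, which are G\aa rding's concavity of $p^{1/n}$ on the hyperbolicity cone and Lidskii-type majorization. The aim is to compare $\lambda(\bm x)$ with the eigenvalues of the ``diagonal part''. Concretely, one would show that $p(\bm x)$ dominates the value at $\bm x$ of the product of linear forms $\prod_i \big((1-\tfrac{\delta}{2})x_i + (\text{nonnegative terms})\big)$. This would be obtained by applying the external-field invariance of real stability and induction on $n$. The induction would differentiate in one variable $x_n$ and use that $\partial_{x_n}p$ is again real stable, of degree $n-1$, with gradient at $\bm 1$ controlled by $\bm c$. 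Each step of the induction should lose at most a factor $(1-\tfrac{\delta}{2})$, in the spirit of the one-variable ``$d_i$'' losses in \Cref{thm:Gurvits}. Taking the infimum over $\bm x$ would then give $\cpc_{\bm 1}(p) \ge (1-\tfrac{\delta}{2})^n$.

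The main obstacle is this inductive step. It requires a one-variable inequality of the form
\[
\cpc\big(\partial_{x_n}p\,|_{x_n=0}\ \text{or}\ p(\cdot,x_n)\big) \;\gtrsim\; \Big(1-\tfrac{\delta}{2}\Big)\cdot(\text{capacity of } p),
\]
in which the $\ell_1$-defect of the gradient does not grow when passing to the derivative. I would first try to prove it for the real-rooted univariate restriction $t \mapsto p(x_1,\ldots,x_{n-1},t)$, using Newton's inequalities and the bound $c_n \le 1 + \tfrac{\delta}{2}$. I would then check that the normalized derivative has gradient defect at most $\delta$, so that the induction closes.
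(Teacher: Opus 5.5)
The paper does not prove this statement; it imports it from \cite{GL21}. So I am judging your argument on its own, and it has a genuine gap: nothing in it actually produces a lower bound on $p(\bm x)$.

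\textbf{Where the gap is.} The eigenvalue facts you record do not bound $\prod_j\lambda_j(\bm x)$ from below, since a nonnegative vector with prescribed sum can have product $0$. The decomposition $\bm c=(1-\tfrac{\delta}{2})\bm 1+\bm w$ is never used. And ``$p(\bm x)$ dominates $\prod_i\bigl((1-\tfrac{\delta}{2})x_i+\text{nonnegative terms}\bigr)$ at $\bm x$'', with $\bm x$-dependent terms, is just a restatement of $p(\bm x)\ge(1-\tfrac{\delta}{2})^n\bm x^{\bm 1}$.

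\textbf{Why the induction cannot supply it.} Your displayed inequality $\cpc(\partial_{x_n}p|_{x_n=0})\gtrsim(1-\tfrac{\delta}{2})\cpc_{\bm 1}(p)$ points the same way as \Cref{thm:Gurvits} and \Cref{Gurvits-strengthening}. It bounds the derivative's capacity from below, which is how one passes from capacity to coefficients. Combined with an inductive bound on the derivative, it says nothing about $\cpc_{\bm 1}(p)$.

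The direction you need, $\cpc_{\bm 1}(p)\ge\cpc_{\bm 1}(\partial_{x_n}p|_{x_n=0})$, holds for free, since $p(\bm x',t)\ge t\,\partial_{x_n}p|_{x_n=0}(\bm x')$. The trouble is that the hypothesis does not pass to the derivative, in two ways:
\begin{itemize}
\item Its value at $\bm 1$ is generally below $1$. For $p=((x_1+\cdots+x_n)/n)^n$ it is $((n-1)/n)^{n-1}$, and iterating only gives $\cpc_{\bm 1}(p)\ge p_{\bm 1}=n!/n^n$ instead of the claimed $1$.
\item Its normalized gradient defect is not bounded by $\delta$. For example, $p=(0.4x_1+0.6x_3)(0.6x_2+0.4x_3)(0.6x_1+0.4x_2)$ is doubly stochastic. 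But $\partial_{x_3}p|_{x_3=0}=(0.16x_1+0.36x_2)(0.6x_1+0.4x_2)$ equals $0.52$ at $\bm 1$, with normalized gradient $\approx(0.908,1.092)$ there.
\end{itemize}
A minor point: real-rootedness of $s\mapsto p(\bm x+s\bm 1)$ comes from real stability directly. It does not follow from \Cref{slc-univariate-eval}, which varies only a single coordinate.

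\textbf{The missing idea.} What you need is a pointwise \emph{productization} of $p$, the technique of \cite{GL21}. The paper uses the same device by hand for determinantal $p$ in Step~1 of the mixed-discriminant proof. Fix $\bm x>\bm 0$ and write $p(\bm x)=\prod_j(M\bm x)_j$ with $M\ge 0$, $M\bm 1=\bm 1$ and $M^\top\bm 1=\bm c$. Your instinct about majorization of hyperbolic eigenvalues is the right tool; here is one way to make it precise.
\begin{itemize}
\item Use sublinearity of the sum $\sigma_k$ of the $k$ largest eigenvalues (Bauschke--G\"uler--Lewis--Sendov), together with $\lambda(\bm e_i)\ge\bm 0$, $\sum_j\lambda_j(\bm e_i)=c_i$ and $\lambda(\bm y+s\bm 1)=\lambda(\bm y)+s\bm 1$. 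This gives $\sigma_k(\bm x)\le\min_{s}\bigl[sk+\sum_i c_i(x_i-s)_+\bigr]$.
\item By LP duality, the right side is the top-$k$ sum of $M^*\bm x$. Here $M^*$ is the north-west-corner transport plan with row sums $\bm 1$ and column sums $\bm c$, for $\bm x$ sorted.
\item Hence $\lambda(\bm x)\prec M^*\bm x$, so $\lambda(\bm x)=DM^*\bm x$ for some doubly stochastic $D$, and $M=DM^*$ works.
\item A max-flow/Hall argument gives $M\ge(1-\tfrac{\delta}{2})D'$ for some doubly stochastic $D'$. Take a row set $R$ and column set $K$ with $|R|>|K|$. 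The mass of $M$ from $R$ into columns outside $K$ is at least $|R|-\sum_{i\in K}c_i\ge(1-\tfrac{\delta}{2})(|R|-|K|)$, because $\sum_{i\in K}(c_i-1)\le\tfrac{\delta}{2}$.
\end{itemize}
Weighted AM--GM then finishes:
\[
p(\bm x)=\prod_j(M\bm x)_j\ge\Bigl(1-\tfrac{\delta}{2}\Bigr)^n\prod_j(D'\bm x)_j\ge\Bigl(1-\tfrac{\delta}{2}\Bigr)^n\bm x^{\bm 1}.
\]
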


% \begin{theorem}[\cite{GKL24}] \label{thm:rs-cap-bound-kappa}
%     If $\bm\kappa \in \Z_{\geq 0}^n$ and $p \in \R[x_1,\ldots,x_n]$ is a not necessarily homogeneous real stable polynomial such that $p(\bm{1}) = 1$ and $\|\bm\kappa-\nabla p(\bm{1})\|_1 < 1$, then
%     \[
%         \cpc_{\bm\kappa}(p) \geq \left(1 - \|\bm{1}-\nabla p(\bm{1})\|_1\right)^n.
%     \]
% \end{theorem}

Finally, we give one final result on the capacity of product of linear forms. This result is known in the literature, but we give a short proof here. We note that this sort of result can also be proven for capacity on completely positive operators.

\begin{lemma} \label{cap-bound-transpose}
    For any $n \times n$ matrix $M$ with non-negative entries, we have
    \[
        \cpc_{\bm{1}}\left(\prod_{i=1}^n (M\bm{x})_i\right) = \frac{1}{n^n} \left(\inf_{\bm{x},\bm{y} > \bm{0}} \frac{\langle M\bm{x},\bm{y}\rangle}{\bm{x}^{\bm{1}} \bm{y}^{\bm{1}}}\right)^n.
    \]
    In particular,
    \[
        \cpc_{\bm{1}}\left(\prod_{i=1}^n (M\bm{x})_i\right) = \cpc_{\bm{1}}\left(\prod_{i=1}^n (M^\top\bm{x})_i\right).
    \]
\end{lemma}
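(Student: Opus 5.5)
The plan is to prove the first identity by applying AM–GM to the inner minimization over $\bm{y}$; the transpose statement then follows from the symmetry of the bilinear form. Fix $\bm{x} > \bm{0}$. For any $\bm{y} > \bm{0}$, AM–GM applied to the $n$ terms $y_i (M\bm{x})_i$ gives
\[
    \langle M\bm{x}, \bm{y}\rangle = \sum_{i=1}^n y_i (M\bm{x})_i \geq n \left(\prod_{i=1}^n y_i (M\bm{x})_i\right)^{1/n} = n\, (\bm{y}^{\bm{1}})^{1/n} \left(\prod_{i=1}^n (M\bm{x})_i\right)^{1/n}.
\]
We will use homogeneity to normalize. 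The ratio $\frac{\langle M\bm{x},\bm{y}\rangle}{\bm{x}^{\bm{1}}\bm{y}^{\bm{1}}}$ is not scale-invariant in $\bm{y}$: it scales like $t^{1-n}$. So the right-hand side should be read with the normalization $\bm{y}^{\bm{1}} = 1$; equivalently, the intended expression is $\frac{\langle M\bm{x},\bm{y}\rangle}{(\bm{x}^{\bm{1}}\bm{y}^{\bm{1}})^{1/n}}$, which is $0$-homogeneous in each of $\bm{x}$ and $\bm{y}$. I will prove the statement in that normalized form, restricting the infimum to $\bm{x}^{\bm{1}} = \bm{y}^{\bm{1}} = 1$, which is the only reading under which the stated identity is meaningful.

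Under $\bm{y}^{\bm{1}} = 1$, the display gives $\langle M\bm{x},\bm{y}\rangle \geq n \left(\prod_i (M\bm{x})_i\right)^{1/n}$. When every $(M\bm{x})_i > 0$, equality holds at $y_i \propto 1/(M\bm{x})_i$, rescaled so that $\bm{y}^{\bm{1}} = 1$. Hence
\[
    \inf_{\bm{y}>\bm{0},\, \bm{y}^{\bm{1}}=1} \langle M\bm{x},\bm{y}\rangle = n \Big(\prod_i (M\bm{x})_i\Big)^{1/n}.
\]
If some $(M\bm{x})_i = 0$, then $M$ has a zero row, and both sides are $0$: send $y_i \to \infty$ and the other coordinates to $0$ while keeping $\bm{y}^{\bm{1}} = 1$. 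Raising to the $n$-th power, dividing by $n^n$ and by $\bm{x}^{\bm{1}}$, and then taking the infimum over $\bm{x}$ (scale-invariant, since $\prod_i (M\bm{x})_i$ is $n$-homogeneous) yields
\[
    \cpc_{\bm{1}}\Big(\prod_i (M\bm{x})_i\Big) = \frac{1}{n^n}\left(\inf_{\bm{x}^{\bm{1}}=\bm{y}^{\bm{1}}=1} \langle M\bm{x},\bm{y}\rangle\right)^n.
\]
Since $t \mapsto t^n$ is monotone on $\R_{\geq 0}$, the infimum commutes with the $n$-th power.

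For the transpose statement, observe that $\langle M\bm{x},\bm{y}\rangle = \langle M^\top \bm{y}, \bm{x}\rangle$ and that the normalized constraint set is symmetric under swapping $\bm{x}$ and $\bm{y}$. The double infimum is therefore the same for $M$ and for $M^\top$, and the first identity gives equal capacities.

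The only delicate point is the normalization and degenerate-case bookkeeping: making sure the infimum over the two variables is taken on the scale-fixed slice, and handling zero rows or columns, where both capacities vanish. The AM–GM step itself is routine.
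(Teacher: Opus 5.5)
Your proof is correct and takes the same route as the paper: AM--GM in $\bm{y}$ with equality at $\bm{y} \propto (M\bm{x})^{-1}$, and then the identity $\langle M\bm{x},\bm{y}\rangle = \langle \bm{x}, M^\top\bm{y}\rangle$ for the transpose claim. Your renormalization of the denominator to $(\bm{x}^{\bm{1}}\bm{y}^{\bm{1}})^{1/n}$ is the form the paper's argument actually proves, since its AM--GM step $\left(\langle \bm{x},\bm{y}\rangle/n\right)^n \geq \bm{x}^{\bm{1}}\bm{y}^{\bm{1}}$ puts the $n$-th power on the inner product alone, and your zero-row case fills a small gap left by the paper's substitution $\bm{y} = (M\bm{x})^{-1}$.
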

\begin{proof}
    Let $p_M(\bm{x}) = \prod_{i=1}^n (M\bm{x})_i$. For all $\bm{x},\bm{y} > \bm{0}$, by AM-GM we have
    \[
        \left(\frac{\langle \bm{x},\bm{y} \rangle}{n}\right)^n \geq \prod_{i=1}^n x_iy_i = \bm{x}^{\bm{1}} \bm{y}^{\bm{1}},
    \]
    with equality if and only if $y_i = x_i^{-1}$ for all $i \in [n]$. Thus we first have
    \[
        \frac{1}{n^n} \left(\inf_{\bm{x},\bm{y} > \bm{0}} \frac{\langle M\bm{x},\bm{y}\rangle}{\bm{x}^{\bm{1}} \bm{y}^{\bm{1}}}\right)^n \geq \inf_{\bm{x} > \bm{0}} \frac{\prod_{i=1}^n (M\bm{x})_i}{\bm{x}^{\bm{1}}} = \cpc_{\bm{1}}(p_M).
    \]
    By the above equality condition, we also have
    \[
        \frac{1}{n^n} \left(\inf_{\bm{x},\bm{y} > \bm{0}} \frac{\langle M\bm{x},\bm{y}\rangle}{\bm{x}^{\bm{1}} \bm{y}^{\bm{1}}}\right)^n \leq \frac{1}{n^n} \left(\inf_{\bm{x} > \bm{0}} \frac{\langle M\bm{x},(M\bm{x})^{-1}\rangle}{\bm{x}^{\bm{1}} (M\bm{x})^{-\bm{1}}}\right)^n = \cpc_{\bm{1}}(p_M)
    \]
    which completes the proof.
\end{proof}

% \begin{lemma} \label{cap-bound-transpose}
%     For any $n \times n$ matrix $M$ with non-negative entries, we have
%     \[
%         \cpc_{\bm{1}}\left(\prod_{i=1}^n (M\bm{x})_i\right) = \sup_{A \in \Mat_n(\bm{1})} \prod_{i,j=1}^n \left(\frac{m_{ij}}{a_{ij}}\right)^{a_{ij}}.
%     \]
%     In particular,
%     \[
%         \cpc_{\bm{1}}\left(\prod_{i=1}^n (M\bm{x})_i\right) = \cpc_{\bm{1}}\left(\prod_{i=1}^n (M^\top\bm{x})_i\right).
%     \]
% \end{lemma}
% \begin{proof}
%     Let $p_i(\bm{x}) = (M\bm{x})_i$, and define $f_i(\bm{y}) = \log p_i(e^{\bm{y}})$. Then $-\log \cpc_{\bm{1}}(p) = \left(f_1 + \cdots + f_n\right)^*(\bm{1})$, where $f^*$ denotes the convex conjugate of $f$:
%     \[
%         f^*(\bm\alpha) = \sup_{\bm{y} \in \R^n} \left[\langle \bm{y}, \bm{a} \rangle - f(\bm{y})\right].
%     \]
%     Since infimal convolution commutes with addition under the convex conjugation (e.g. \cite[Thm. 16.4]{Rockafellar1970Convex}), we have
%     \[
%         \left(f_1 + \cdots + f_n\right)^*(\bm{1}) = \inf_{\sum_i \bm{\alpha}_i = \bm{1}}\left(f_1^*(\bm\alpha_1) + \cdots + f_n^*(\bm\alpha_n)\right).
%     \]
%     A straightforward calculus computation gives
%     \[
%         -f_i^*(\bm\alpha_i) = \log\cpc_{\bm{\alpha_i}}(p_i) = \begin{cases}
%             \sum_{j=1}^n \alpha_{ij} \log\left(\frac{m_{ij}}{\alpha_{ij}}\right), & \sum_{j=1}^n \alpha_{ij} = 1 \\
%             -\infty, & \text{otherwise.}
%         \end{cases}
%     \]
%     Combining these facts gives the desired result.
% \end{proof}

\section{Optimizing Forms}

In this section, we give some general results on optimizing forms. These results are mainly about determining what sort of optima a form can have on a given feasible region. The main result of this section is \Cref{thm:diagonal-minimizers}, which proves that minimizers of symmetric multilinear forms over symmetric compact convex sets are symmetric, so long as no minimizers occur on the boundary. We will later use this result to prove our main result on the uniqueness of the rank-one minimizer for the mixed discriminant. That said, we have decided to state the results here more generally, as they seem interesting in their own right.

The first result is on quadratic forms which we will bootstrap to prove the main result.

\begin{lemma} \label{lem:at-most-one-min}
    Let $q(\bm{x})$ be a quadratic form on $\R^n$, and let $S \subset \R^n$ be a closed convex set containing no affine lines. If $\min\{q(\bm{x}) : \bm{x} \in S\}$ has no minimizers on the relative boundary of $S$, then it has at most one minimizer in $S$.
\end{lemma}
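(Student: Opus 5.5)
Suppose for contradiction that there are two distinct minimizers $\bm{a},\bm{b} \in S$, and let $m$ denote the common minimum value. We restrict $q$ to the affine line $\ell = \{\bm{a} + t(\bm{b}-\bm{a}) : t \in \R\}$. There it becomes a univariate quadratic polynomial $g(t) = q(\bm{a}+t(\bm{b}-\bm{a}))$, and we write $g(t) = \alpha t^2 + \beta t + \gamma$. Since $S$ is convex, the segment $[\bm{a},\bm{b}]$ lies in $S$, so $g(t) \geq m$ on $[0,1]$, while $g(0) = g(1) = m$. We split into cases according to the sign of the leading coefficient $\alpha$.

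\emph{Case $\alpha < 0$.} Then $g$ is strictly concave. For $t \in (0,1)$ we get $g(t) > m$, but we also get $g(t) < m$ for $t$ slightly outside $[0,1]$. Both of these are allowed as long as we stay in $S$. Since $S$ is closed, convex and contains no affine lines, the intersection $\ell \cap S$ is a closed interval that is not all of $\ell$. Therefore at least one endpoint exists, say at parameter $t_0 \notin (0,1)$ with $t_0 \leq 0$ or $t_0 \geq 1$. If $t_0 \notin \{0,1\}$, then concavity gives $g(t_0) < m$, contradicting minimality. If $t_0 \in \{0,1\}$, the endpoint is $\bm{a}$ or $\bm{b}$ itself. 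In either case we must argue that such an endpoint lies on the relative boundary of $S$. This holds because a point of the relative interior has a neighborhood in the affine hull of $S$ contained in $S$, and $\ell$ lies in that affine hull since $\bm{a},\bm{b} \in S$. Hence an endpoint of $\ell \cap S$ is a relative boundary point. This yields a minimizer on the relative boundary, a contradiction.

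\emph{Case $\alpha = 0$.} Then $g$ is affine with $g(0)=g(1)=m$, so $g \equiv m$ on all of $\ell$. Every point of $\ell \cap S$ is then a minimizer, and since $\ell \cap S \neq \ell$ this set has an endpoint, which by the same argument lies on the relative boundary. Contradiction.

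\emph{Case $\alpha > 0$.} Then $g$ is strictly convex, so $g(1/2) < \frac{1}{2}(g(0)+g(1)) = m$. But the point $\frac{\bm{a}+\bm{b}}{2}$ lies in $S$, contradicting minimality.

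In all three cases we reach a contradiction, so there is at most one minimizer. The only subtle step is the claim that an endpoint of $\ell \cap S$ lies on the relative boundary of $S$. We verify it through the affine hull containment noted above, and it is exactly where the hypothesis that $S$ contains no affine lines is used. Note that $S$ need not be bounded, so we cannot assume both endpoints exist; one endpoint suffices in each case.
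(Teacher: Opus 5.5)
Your proof is correct, but it reaches the contradiction by a different mechanism than the paper. The paper does not split into cases. Since both minimizers $\bm{u},\bm{v}$ lie in the relative interior, first-order optimality forces $\nabla q(\bm{u})$ and $\nabla q(\bm{v})$ to be orthogonal to the direction space of $\mathrm{aff}(S)$. Because $\nabla q$ is linear, this orthogonality persists along the whole line through $\bm{u},\bm{v}$, so $q$ is constant on that line. In your notation, $g'(0)=g'(1)=0$ forces $g$ to be constant, which is exactly your case $\alpha=0$. The paper then finishes just as you do in that case, by taking a relative boundary point of $S \cap L$.

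You instead use only function values, namely $g(0)=g(1)=m$ and $g \geq m$ on $\ell \cap S$. This requires you to rule out $\alpha>0$ by midpoint convexity and $\alpha<0$ by stepping past an endpoint. Your route is fully elementary and needs no first-order optimality conditions. It also shows that the hypotheses are used unevenly. The case $\alpha>0$ is impossible for any convex $S$. The case $\alpha<0$ does not even need the no-affine-lines assumption: a relative-interior $\bm{a}$ has points $\bm{a}-s(\bm{b}-\bm{a}) \in S$ for small $s>0$, and there $g<m$. The paper's route is shorter and avoids the case analysis altogether.

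One small imprecision in your closing remark concerns where the no-affine-lines hypothesis enters. It is what guarantees that $\ell \cap S$ has an endpoint at all. The fact that such an endpoint lies on the relative boundary uses only $\ell \subseteq \mathrm{aff}(S)$.
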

\begin{proof}
    Suppose $\min\{q(\bm{x}) : \bm{x} \in S\}$ has no minima on the relative boundary $S$, but $\bm{u} \neq \bm{v}$ are two minima in the relative interior of $S$. Letting $W \subseteq \R^n$ denote the orthogonal complement to the vector space parallel to the affine span of $S$, minimality in the relative interior implies $\nabla q(\bm{u}), \nabla q(\bm{v}) \in W$. Since $q$ is a quadratic form, its gradient is a linear form, and thus $\nabla q(\bm{y}) \in W$ for all $\bm{y}$ in the line $L$ through $\bm{u}$ and $\bm{v}$. Therefore every $\bm{y} \in S \cap L$ minimizes $q(\bm{x})$ over $S$. Since $S \cap L$ contains a point of the relative boundary of $S$, this is a contradiction.
\end{proof}

We now prove a few basic convex geometric results.

\begin{lemma} \label{lem:affine-lines}
    Fix a set $S \subseteq (\R^n)^d$ with $d \geq 2$ and any $\bm{w}_3,\ldots,\bm{w}_d \in \R^n$. If $T = \{(\bm{u},\bm{v}) \in (\R^n)^2 : (\bm{u},\bm{v},\bm{w}_3,\ldots,\bm{w}_d) \in S\}$ contains an affine line, then $S$ contains an affine line.
\end{lemma}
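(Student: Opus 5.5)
The plan is to lift the line in $T$ directly to a line in $S$ by padding it with the fixed vectors. Suppose $T$ contains an affine line. Then there is a point $(\bm{u}_0,\bm{v}_0) \in T$ and a nonzero direction $(\bm{a},\bm{b}) \in (\R^n)^2$ such that $(\bm{u}_0 + t\bm{a}, \bm{v}_0 + t\bm{b}) \in T$ for all $t \in \R$. By the definition of $T$, this means
\[
    (\bm{u}_0 + t\bm{a}, \bm{v}_0 + t\bm{b}, \bm{w}_3, \ldots, \bm{w}_d) \in S \quad \text{for all } t \in \R.
\]

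The set of these points is $\{\bm{p}_0 + t\bm{d} : t \in \R\}$, where
\[
    \bm{p}_0 = (\bm{u}_0,\bm{v}_0,\bm{w}_3,\ldots,\bm{w}_d), \qquad \bm{d} = (\bm{a},\bm{b},\bm{0},\ldots,\bm{0}).
\]
Since $(\bm{a},\bm{b}) \neq \bm{0}$, we have $\bm{d} \neq \bm{0}$, so this set is an affine line in $(\R^n)^d$. It is contained in $S$, which proves the claim.

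In other words, $T$ is the preimage of $S$ under the injective affine map $(\bm{u},\bm{v}) \mapsto (\bm{u},\bm{v},\bm{w}_3,\ldots,\bm{w}_d)$, and injective affine maps send lines to lines. There is no real obstacle here. The only point to check is that the direction stays nonzero, and this holds because the embedding is injective.

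We expect the lemma to be used in contrapositive form. If $S$ contains no affine lines, then no slice $T$ obtained by fixing all but two coordinates contains one either. This lets \Cref{lem:at-most-one-min} be applied to the quadratic form obtained from a multilinear form by freezing all but two arguments.
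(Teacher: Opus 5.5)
Your proposal is correct and matches the paper's proof: both pad a parametrized line $(\bm{u}_0,\bm{v}_0)+t(\bm{a},\bm{b})$ in $T$ with the fixed vectors $\bm{w}_3,\ldots,\bm{w}_d$ and extend the direction by zeros to obtain a line in $S$. Your explicit check that the lifted direction $(\bm{a},\bm{b},\bm{0},\ldots,\bm{0})$ is nonzero is a detail the paper leaves implicit.
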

\begin{proof}
    By assumption, $(\bm{u}_0,\bm{v}_0) + t \cdot (\bm{u}_1,\bm{v}_1) \in T$ for all $t \in \R$. Therefore $(\bm{u}_0,\bm{v}_0,\bm{w}_3,\ldots,\bm{w}_d) + t \cdot (\bm{u}_1,\bm{v}_1,\bm{0},\ldots,\bm{0}) \in S$ for all $t \in \R$ as desired.
\end{proof}
% \begin{proof}
%     $(\implies)$. This direction is immediate by considering $L^d \subseteq S^d$ for any affine line $L \subseteq S$.

%     $(\impliedby)$. Let $L \subseteq S^d$ be some affine line. Thus there exists $(\bm{a}_1,\ldots,\bm{a}_d) \neq (\bm{b}_1,\ldots,\bm{b}_d)$ in $S^d$ such that
%     \[
%         t \cdot (\bm{a}_1,\ldots,\bm{a}_d) + (1-t) \cdot (\bm{b}_1,\ldots,\bm{b}_d) \in S^d
%     \]
%     for all $t \in \R$. Let $i \in [d]$ be such that $\bm{a}_i \neq \bm{b}_i$, and thus
%     \[
%         t \cdot \bm{a}_i + (1-t) \cdot \bm{b}_i \in S
%     \]
%     for all $t \in \R$. Therefore $S$ contains an affine line.
% \end{proof}

\begin{lemma} \label{lem:rel-boundary}
    Fix a closed convex set $S \subseteq (\R^n)^d$ with $d \geq 2$ and any $\bm{w}_3,\ldots,\bm{w}_d \in \R^n$. If $(\bm{u},\bm{v})$ is on the relative boundary of $T = \{(\bm{x},\bm{y}) \in (\R^n)^2 : (\bm{x},\bm{y},\bm{w}_3,\ldots,\bm{w}_d) \in S\}$, then $(\bm{u},\bm{v},\bm{w}_3,\ldots,\bm{w}_d)$ is on the relative boundary of $S$.
\end{lemma}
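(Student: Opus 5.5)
We argue by contrapositive: assuming $\bm{p} := (\bm{u},\bm{v},\bm{w}_3,\ldots,\bm{w}_d)$ lies in the relative interior of $S$, we show that $(\bm{u},\bm{v})$ lies in the relative interior of $T$. The tool is the standard characterization of the relative interior of a convex set $K$: a point $\bm{z} \in K$ is in the relative interior iff for every $\bm{z}' \in K$ there is $\mu > 1$ with $\mu\bm{z} + (1-\mu)\bm{z}' \in K$. That is, every segment in $K$ ending at $\bm{z}$ can be extended slightly beyond $\bm{z}$ while staying in $K$. This avoids having to compare the affine hulls of $S$ and $T$ directly, which is the only potentially delicate point.

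First we note that $T$ is closed and convex, being the preimage of $S$ under the affine injection $(\bm{x},\bm{y}) \mapsto (\bm{x},\bm{y},\bm{w}_3,\ldots,\bm{w}_d)$. Now let $(\bm{x},\bm{y}) \in T$ be arbitrary, so that $\bm{q} := (\bm{x},\bm{y},\bm{w}_3,\ldots,\bm{w}_d) \in S$. Since $\bm{p}$ is in the relative interior of $S$, there is $\mu > 1$ with $\mu\bm{p} + (1-\mu)\bm{q} \in S$. The last $d-2$ blocks of this point are $\mu\bm{w}_j + (1-\mu)\bm{w}_j = \bm{w}_j$. Hence
\[
\mu(\bm{u},\bm{v}) + (1-\mu)(\bm{x},\bm{y}) \in T.
\]
Since $(\bm{x},\bm{y}) \in T$ was arbitrary, the characterization applied to the convex set $T$ shows that $(\bm{u},\bm{v})$ is in the relative interior of $T$. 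This contradicts the hypothesis that it lies on the relative boundary.

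It remains to confirm that $\bm{p}$ is actually in $S$. This holds because $(\bm{u},\bm{v})$ is on the relative boundary of the closed set $T$, so $(\bm{u},\bm{v}) \in T$ and therefore $\bm{p} \in S$. Thus the contrapositive above applies, and $\bm{p}$ is on the relative boundary of $S$.

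The main obstacle is only to state the relative-interior characterization correctly and to use it rather than arguing through affine spans. This characterization is Rockafellar, Theorem 6.4, so it can be cited rather than reproved. The rest is a direct computation with the fixed blocks $\bm{w}_3,\ldots,\bm{w}_d$.
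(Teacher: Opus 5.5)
Your proof is correct, but it takes a different route from the paper's. The paper argues directly from the neighborhood definition of relative interior. It takes an arbitrary small relatively open ball $B \subset \mathrm{aff}(S)$ centered at $(\bm{u},\bm{v},\bm{w}_3,\ldots,\bm{w}_d)$ and slices it to $B' = \{(\bm{x},\bm{y}) : (\bm{x},\bm{y},\bm{w}_3,\ldots,\bm{w}_d) \in B\}$. It then uses that $(\bm{u},\bm{v})$ is a relative boundary point of $T$ to find a point of $B' \setminus T$, and lifts that point to one in $B \setminus S$.

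You instead use the prolongation characterization of the relative interior (Rockafellar, Thm.~6.4). Extending a segment in $S$ past $(\bm{u},\bm{v},\bm{w}_3,\ldots,\bm{w}_d)$ leaves the last $d-2$ blocks fixed, so the extension descends to $T$. Your route therefore never compares $\mathrm{aff}(T)$ with the slice of $\mathrm{aff}(S)$, as you said.

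That comparison is where the paper is a bit loose. It asserts that $B'$ is relatively open in $\mathrm{aff}(T)$, which can fail when the affine subspace $\{(\bm{x},\bm{y},\bm{w}_3,\ldots,\bm{w}_d) : \bm{x},\bm{y} \in \R^n\}$ misses the relative interior of $S$. In that case $\mathrm{aff}(T)$ can be strictly smaller than the corresponding slice of $\mathrm{aff}(S)$. The paper's argument still works, because any point of $B'$ outside $T$, whether or not it lies in $\mathrm{aff}(T)$, lifts to a point of $B \setminus S$; but this takes a moment's thought.

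You also check explicitly, via closedness of $T$, that $(\bm{u},\bm{v},\bm{w}_3,\ldots,\bm{w}_d) \in S$, which the paper leaves implicit. In exchange, the paper's version is self-contained from the definition and needs no cited theorem.
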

\begin{proof}
    Fix $(\bm{u},\bm{v})$ on the relative boundary of $T$. Let $B \subset \mathrm{aff}(S)$ be some small relatively open ball centered at $(\bm{u},\bm{v},\bm{w}_3,\ldots,\bm{w}_d)$, and define
    \[
        B' := \{(\bm{x},\bm{y}) : (\bm{x},\bm{y},\bm{w}_3,\ldots,\bm{w}_d) \in B\}.
    \]
    Thus $B'$ is relatively open in $\mathrm{aff}(T)$ and contains $(\bm{u},\bm{v})$. Since $(\bm{u},\bm{v})$ is on the relative boundary of $T$, there exists $(\bm{u}',\bm{v}') \in B' \setminus T$. Therefore $(\bm{u}',\bm{v}',\bm{w}_3,\ldots,\bm{w}_d) \in B \setminus S$, which implies $B \not\subset S$. Since this holds for any small ball about $(\bm{u},\bm{v},\bm{w}_3,\ldots,\bm{w}_d)$, we must have that $(\bm{u},\bm{v},\bm{w}_3,\ldots,\bm{w}_d)$ is on the relative boundary of $S$.
\end{proof}

Finally, we now prove the main result of the section. First we need a definition.

\begin{definition}
    We call a set $S \subseteq (\R^n)^d$ \textbf{$S_d$-invariant} if for all $(\bm{w}_1,\ldots,\bm{w}_d) \in S$ and all $\sigma \in S_d$ we have that $(\bm{w}_{\sigma(1)},\ldots,\bm{w}_{\sigma(d)}) \in S$.
\end{definition}

\begin{theorem} \label{thm:diagonal-minimizers}
    Let $P(\bm{x}_1,\ldots,\bm{x}_d)$ be a symmetric multilinear form on $\R^n$ for some $d \geq 2$, and let $S \subset (\R^n)^d$ be an $S_d$-invariant closed convex set containing no affine lines. If $\min\{P(\bm{x}_1,\ldots,\bm{x}_d) : (\bm{x}_1,\ldots,\bm{x}_d) \in S\}$ has no minimizers on the relative boundary of $S$, then all its minimizers over $S$ are of the form $(\bm{w},\ldots,\bm{w})$ for some $\bm{w} \in \R^n$.
\end{theorem}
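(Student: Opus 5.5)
Take a minimizer $(\bm{x}_1,\ldots,\bm{x}_d) \in S$ and suppose, for contradiction, that $\bm{x}_1 \neq \bm{x}_2$; by $S_d$-invariance we may relabel, so it suffices to show that any two coordinates of a minimizer agree. The plan is to freeze the last $d-2$ coordinates and reduce to \Cref{lem:at-most-one-min}. Set $\bm{w}_i = \bm{x}_i$ for $i \geq 3$, let
\[
    T = \{(\bm{u},\bm{v}) \in (\R^n)^2 : (\bm{u},\bm{v},\bm{w}_3,\ldots,\bm{w}_d) \in S\},
\]
and let $q(\bm{u},\bm{v}) = P(\bm{u},\bm{v},\bm{w}_3,\ldots,\bm{w}_d)$. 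Since $P$ is multilinear, $q$ is a bilinear, hence quadratic, form on $\R^{2n}$.

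Next, check the hypotheses of \Cref{lem:at-most-one-min} for $q$ on $T$. As a slice of a closed convex set by an affine subspace, $T$ is closed and convex. By \Cref{lem:affine-lines}, $T$ contains no affine lines, since otherwise $S$ would. The minimum of $q$ over $T$ is at least the minimum of $P$ over $S$, and this value is attained at $(\bm{x}_1,\bm{x}_2) \in T$. So every minimizer $(\bm{u},\bm{v})$ of $q$ over $T$ gives a minimizer $(\bm{u},\bm{v},\bm{w}_3,\ldots,\bm{w}_d)$ of $P$ over $S$. If such a $(\bm{u},\bm{v})$ lay on the relative boundary of $T$, then \Cref{lem:rel-boundary} would place the corresponding point on the relative boundary of $S$, contradicting the hypothesis. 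Hence $q$ has no minimizers on the relative boundary of $T$, and \Cref{lem:at-most-one-min} says $q$ has at most one minimizer on $T$.

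Finally, use the symmetry. Because $S$ is $S_d$-invariant (swap the first two coordinates), $(\bm{x}_2,\bm{x}_1) \in T$. Because $P$ is symmetric, $q(\bm{x}_2,\bm{x}_1) = q(\bm{x}_1,\bm{x}_2)$, so $(\bm{x}_2,\bm{x}_1)$ is also a minimizer of $q$ over $T$. Uniqueness then forces $(\bm{x}_1,\bm{x}_2) = (\bm{x}_2,\bm{x}_1)$, that is, $\bm{x}_1 = \bm{x}_2$. Applying this to every pair of coordinates, after permuting, shows the minimizer has the form $(\bm{w},\ldots,\bm{w})$.

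The main point needing care is the transfer of the boundary condition from $S$ to the slice $T$, which is exactly what \Cref{lem:rel-boundary} provides. A subsidiary check is that minimizers of $q$ on $T$ are genuinely global minimizers of $P$ on $S$; this holds because the two minimum values coincide.
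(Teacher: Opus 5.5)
Your proposal is correct and matches the paper's proof step for step: freeze $\bm{w}_3,\ldots,\bm{w}_d$, use \Cref{lem:affine-lines} and \Cref{lem:rel-boundary} to check the hypotheses of \Cref{lem:at-most-one-min} for the bilinear slice $q$ on $T$, then use the swap $(\bm{x}_1,\bm{x}_2)\mapsto(\bm{x}_2,\bm{x}_1)$ together with uniqueness to force $\bm{x}_1=\bm{x}_2$. You are slightly more explicit than the paper on two points: why $T$ is closed and convex, and why minimizers of $q$ over $T$ are genuine minimizers of $P$ over $S$.
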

\begin{proof}
    If $P$ has no minimizer in $S$, then we are done. Otherwise suppose $(\bm{w}_1,\ldots,\bm{w}_d)$ is a minimizer of $P$ over $S$, which by assumption is contained in the relative interior of $S$. Define a function
    \[
        f(\bm{u},\bm{v}) := P(\bm{u},\bm{v},\bm{w}_3,\ldots,\bm{w}_d),
    \]
    which is a symmetric bilinear form on $\R^n$. Further, one can consider $q(\bm{u},\bm{v}) = f(\bm{u},\bm{v})$ to be a quadratic form on $\R^{2n}$. Thus $(\bm{w}_1,\bm{w}_2)$ is a minimizer of $q$ over $T = \{(\bm{u},\bm{v}) \in (\R^n)^2 : (\bm{u},\bm{v},\bm{w}_3,\ldots,\bm{w}_d) \in S\}$.

    Suppose now that $\min\{q(\bm{x},\bm{y}) : (\bm{x},\bm{y}) \in T\}$ has a minimizer $(\bm{u}',\bm{v}')$ on the relative boundary of $T$. Then $(\bm{u}',\bm{v}',\bm{w}_3,\ldots,\bm{w}_d)$ is a minimizer of $P$ over $S$, and also $(\bm{u}',\bm{v}',\bm{w}_3,\ldots,\bm{w}_d)$ is contained in the relative boundary of $S$ by \cref{lem:rel-boundary}. This contradicts the assumptions, and thus $\min\{q(\bm{x},\bm{y}) : (\bm{x},\bm{y}) \in T\}$ has no minimizers on the relative boundary of $T$. Thus by \cref{lem:at-most-one-min} and \cref{lem:affine-lines}, $\min\{q(\bm{x},\bm{y}) : (\bm{x},\bm{y}) \in T\}$ has at most one minimizer in $T$, and therefore $(\bm{w}_1,\bm{w}_2)$ is the unique minimizer of $q$ over $T$.

    But by symmetry of $P$ and $S_d$-invariance of $S$, $(\bm{w}_2,\bm{w}_1)$ is also a minimizer of $q$ over $T$, and thus $\bm{w}_1 = \bm{w}_2$. By applying the same argument to $(\bm{w}_i,\bm{w}_j)$ for all $i \neq j$, we find that there is some $\bm{w} \in S$ such that $\bm{w}_i = \bm{w}$ for all $i$. Thus all minimizers of $P$ are of the form $(\bm{w},\ldots,\bm{w})$ for some $\bm{w} \in \R^n$.
\end{proof}

\section{Unique Minimizer: Mixed Discriminant} \label{sec:proof-MD-rank-one-minimizer}

% - mixed form is inner product of p and product of linears
% - mixed discriminant with better bound: sum of x_i A_i, diagonalize this matrix, diagonal exrpessable as x_i * diag(UA_iU*), row sums equal alpha, majorized by eigenvalues, now plug in alpha into bound, use triangle ineq, get a different bound for each x
% - just write it more mixed discriminant, also Hermitian
% - rectangular permanent follows from adding all ones to matrix
% - only geometric stuff to state for mixed discriminant: linear space of tuple of vectors x_1,...,x_n with property that sum is 0 and <u,y_i> = 0 for some u, orthogonal complement of this is (c + delta_i)_i where each delta_i is proportional to u

% - eigenvalues of diagonal D are majorized by diagonals of A_j? row sums are eigenvalues of D
% - matrix capacity is same as transpose

In this section we prove our first main result \Cref{main-MD-unique-min} on the unique minimization of the mixed discriminant. Recall that the \textbf{mixed discriminant} is defined via
\[
    \MD(M_1,\ldots,M_n) = \frac{1}{n!} \partial_{x_1} \cdots \partial_{x_n} \det\left(x_1 A_1 + \cdots + x_n A_n\right)
\]
for a given tuple of $n \times n$ matrices $(A_1,\ldots,A_n)$. Our main result can then be re-stated as follows.

\begin{theorem} \label{section-md-unique-min}
    For all $n$ there exists $\epsilon > 0$ such that: for all $C \in \PSD_n$ with eigenvalues $\bm{c}$ such that $\tr(C) = n$ and $\|\bm{c}-\bm{1}\|_1 < \epsilon$, the mixed discriminant is uniquely minimized over $\MatTup_n(C)$ at $\frac{1}{n} (C,C,\ldots,C)$.

    More concretely, the result holds for any $C \in \PSD_n$ with eigenvalues $\bm{c}$ such that $\tr(C) = n$ and
    \[
        \frac{1}{c_1 \cdots c_n} \left(1 - \frac{\|\bm{c}-\bm{1}\|_1}{2}\right)^n > \frac{(n-1)^{2n-3}}{n^{n-1}(n-2)^{n-2}}.
    \]
\end{theorem}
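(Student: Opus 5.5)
The plan is to apply \Cref{thm:diagonal-minimizers} with $P = \MD$, viewed as a symmetric multilinear form on symmetric $n \times n$ matrices, and $S = \MatTup_n(C)$.

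\textbf{Setting up the reduction.} The set $S$ is the intersection of $\PSD_n^n$ with affine constraints. It is closed, convex and $S_n$-invariant, and it is bounded because each $X_i \succeq 0$ has trace $1$, so it contains no affine lines. Once we show that no minimizer lies on the relative boundary of $S$, \Cref{thm:diagonal-minimizers} says every minimizer has the form $(W,\ldots,W)$. The constraint $nW = C$ then forces $W = \frac1n C$, which gives uniqueness; existence follows from compactness. The hypothesis forces $c_1\cdots c_n>0$, so $C \succ 0$ and $\frac1n(C,\ldots,C)$ is a tuple of positive definite matrices in $S$. Since the relative interior of the intersection of an affine space with the open cone $(\PSD_n^{\circ})^n$ is nonempty, I will argue (standard convex analysis) that the relative boundary of $S$ consists of tuples with some $X_i$ singular. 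The target inequality is therefore
\[
\MD(X_1,\ldots,X_n) > \MD\left(\tfrac1n C,\ldots,\tfrac1n C\right) = \frac{c_1\cdots c_n}{n^n}
\qquad \text{whenever some } X_i \text{ is singular.}
\]

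\textbf{Boundary bound.} Let $p(\bm{x}) = \det(\sum_i x_i X_i)$. This is $n$-homogeneous and real stable, and its $x_1\cdots x_n$ coefficient is $n!\,\MD(X_1,\ldots,X_n)$. If $X_i$ is singular then $p(\bm{e}_i) = \det X_i = 0$, so \Cref{boundary-cap-bound} gives
\[
n!\,\MD \geq \cpc_{\bm{1}}(p)\cdot\frac{(n-2)^{n-2}(n-1)!}{(n-1)^{2n-3}}.
\]
A short rearrangement shows this beats $\frac{c_1\cdots c_n}{n^n}$ exactly when
\[
\frac{\cpc_{\bm{1}}(p)}{c_1\cdots c_n} > \frac{(n-1)^{2n-3}}{n^{n-1}(n-2)^{n-2}}.
\]
Comparing with the displayed hypothesis of the theorem, it therefore suffices to prove
\[
\cpc_{\bm{1}}(p) \geq \left(1-\tfrac{\|\bm{c}-\bm{1}\|_1}{2}\right)^n
\]
for every tuple in $S$. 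The qualitative statement then follows, since the left side of the hypothesis tends to $1$ as $\bm{c}\to\bm{1}$ and $\frac{(n-1)^{2n-3}}{n^{n-1}(n-2)^{n-2}}<1$, which is equivalent to the strict inequality in \Cref{boundary-cap-bound}.

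\textbf{Capacity lower bound (main obstacle).} Applying \Cref{thm:rs-cap-bound} directly to $p/\det C$ is awkward. Its gradient at $\bm{1}$ is $(\tr(C^{-1}X_i))_i$, whose distance from $\bm{1}$ is not obviously controlled by $\|\bm{c}-\bm{1}\|_1$. Instead I would use a transpose or duality trick, in the spirit of \Cref{cap-bound-transpose}. Write $C = U\,\mathrm{diag}(\bm{c})\,U^*$. By concavity of $\log\det$ and AM--GM, $\det(\sum_i x_i X_i)$ can be bounded below by optimizing $\prod_i \tr(X_i Y)$ over $Y \succ 0$ with $\det Y$ normalized, which is the operator-capacity analogue of \Cref{cap-bound-transpose}. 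Then choose $Y = U\,\mathrm{diag}(\bm{y})\,U^*$ to reduce to the product of linear forms $\prod_i (M\bm{y})_i$ with $M_{ij} = (U^*X_iU)_{jj}$. This $M$ is nonnegative with row sums $\tr X_i = 1$ and column sums $\bm{c}$. By \Cref{cap-bound-transpose} we may pass to $M^\top$, and the resulting real stable polynomial $\prod_j (M^\top \bm{x})_j$ has value $1$ at $\bm{1}$ up to normalization, with gradient at $\bm{1}$ within $\|\bm{c}-\bm{1}\|_1$ of $\bm{1}$. So \Cref{thm:rs-cap-bound} yields the factor $(1-\|\bm{c}-\bm{1}\|_1/2)^n$.

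The delicate point is the direction of this inequality. Restricting $Y$ to matrices diagonal in the eigenbasis of $C$ naively gives an upper bound on an infimum. I expect to need the duality identity in the form $\cpc(p) = \sup$ of a dual expression, so that a particular feasible choice gives a lower bound. Alternatively, one can handle it by first applying the external field and basis change that make $C = I$ up to the diagonal $\bm{c}$, and then applying \Cref{thm:rs-cap-bound} to the rescaled determinantal polynomial. Getting this step right, with the correct normalization producing the factor $c_1\cdots c_n$, is where I expect the real work to be.
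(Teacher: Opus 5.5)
\textbf{There is a genuine gap: the capacity lower bound, which is the heart of the proof, is never established.} Your reduction to \Cref{thm:diagonal-minimizers}, the boundary estimate via \Cref{boundary-cap-bound}, and the rearrangement to the target $\cpc_{\bm 1}(p)\ge(1-\|\bm c-\bm 1\|_1/2)^n$ all match the paper's Step~2.

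Your identity $\cpc_{\bm 1}(p)=\inf_{Y\succ 0,\,\det Y=1}\prod_i\tr(X_iY)$ is correct. However, restricting $Y$ to the fixed eigenbasis of $C$ only bounds this infimum from above, as you note yourself, and neither suggested repair is carried out.

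The transpose step as written also does not do what you claim. For your $M$ (row sums $\bm 1$, column sums $\bm c$), the polynomial $\prod_i(M\bm z)_i$ is already normalized: its value at $\bm 1$ is $1$ and its gradient there is $\bm c$. By contrast, $\prod_j(M^\top\bm z)_j$ has value $c_1\cdots c_n$ at $\bm 1$, and its normalized gradient is $\bigl(\sum_j M_{kj}/c_j\bigr)_k$, which is not controlled by $\|\bm c-\bm 1\|_1$. Nor should any factor $c_1\cdots c_n$ appear in the capacity bound; it enters only through $\MD(\frac1n C,\ldots,\frac1n C)$.

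The missing idea is to let the diagonalizing basis depend on the point, and to control the resulting marginals by majorization rather than insisting they equal $\bm c$. The paper proceeds as follows.
\begin{itemize}
\item Fix $\bm x>\bm 0$ and diagonalize $\sum_i x_iX_i=UDU^*$ with $U=U(\bm x)$. This gives the exact identity $p(\bm x)=\prod_i(M\bm x)_i$ with $M_{ij}=(U^*X_jU)_{ii}\ge 0$.
\item This $M$ has column sums $\tr X_j=1$ and row sums $\bm a=\mathrm{diag}(U^*CU)$.
\item By Schur--Horn (condition (1) of \Cref{majorization}), $\bm a\preceq\bm c$, so condition (2) gives $\|\bm a-\bm 1\|_1\le\|\bm c-\bm 1\|_1$.
\item \Cref{cap-bound-transpose} and \Cref{thm:rs-cap-bound} then give
\[
\frac{p(\bm x)}{x_1\cdots x_n}\ \ge\ \cpc_{\bm 1}\Bigl(\prod_i (M^\top\bm z)_i\Bigr)\ \ge\ \Bigl(1-\tfrac{\|\bm a-\bm 1\|_1}{2}\Bigr)^n\ \ge\ \Bigl(1-\tfrac{\|\bm c-\bm 1\|_1}{2}\Bigr)^n .
\]
This holds uniformly in $\bm x$ even though $M$ varies with $\bm x$.
\end{itemize}

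Your dual picture can be repaired the same way: take $U$ to be the eigenbasis of each $Y$, not of $C$. Then $\prod_i\tr(X_iY)=\prod_i(M_Y\bm y)_i$ exactly, and $M_Y$ has row sums $\bm 1$ and column sums $\mathrm{diag}(U^*CU)\preceq\bm c$. So \Cref{thm:rs-cap-bound} applies directly to $\prod_i(M_Y\bm z)_i$, with no transpose needed.

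Your discarded $C^{-1/2}$-congruence route does work up to a point. Since $|\tr((C^{-1}-I)X_i)|\le\max_j|c_j^{-1}-1|$, it gives $\cpc_{\bm 1}(p)\ge\det C\,(1-\tfrac n2\max_j|c_j^{-1}-1|)^n$. That is enough for the existence of $\epsilon$, but not for the concrete inequality in the statement.
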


We now prove \Cref{section-md-unique-min}. Fix $(A_1,\ldots,A_n) \in \MatTup_n(C)$. We consider
\[
    p(x_1,\ldots,x_n) = \det\left(\sum_{i=1}^n x_i A_i\right),
\]
where the mixed discriminant of $A_1,\ldots,A_n$ is proportional to the all-ones coefficient of $p$.

\paragraph{Step 1: Lower bound the capacity of $p$.} Fix $x_1,\ldots,x_n \in \R$. Since $\sum_{i=1}^n x_i A_i$ is Hermitian, there exist unitary $U$ and diagonal $D$ (dependent on $x$) such that
\[
    UDU^* = \sum_{i=1}^n x_i A_i.
\]
This implies
\[
    D = \sum_{i=1}^n x_i U^*A_iU,
\]
which gives
\[
    p(x_1,\ldots,x_n) = \prod_{i=1}^n \sum_{j=1}^n M_{ij} x_j
\]
where $M_{ij} = (U^*A_jU)_{ii} \geq 0$ since $A_j$ is PSD for all $j \in [n]$. Note that for each $j$ we have
\[
    \sum_{i=1}^n M_{ij} = \sum_{i=1}^n (U^*A_jU)_{ii} = \tr(U^*A_jU) = 1,
\]
and for each $i$ we have
\[
    \sum_{j=1}^n M_{ij} = \sum_{j=1}^n (U^*A_jU)_{ii} = (U^*CU)_{ii}.
\]
Thus the column sums of $M$ are $\bm{1}$ and the row sums $\bm{a}$ of $M$ are \textbf{majorized} by $\bm{c}$ (see condition $(1)$ of \Cref{majorization} below). We recall some equivalent definitions of majorization as follows. We will not use all of these, but have decided to include them for the interested reader.

\begin{definition}[e.g. \cite{MarshallOlkinArnold2011}] \label{majorization}
    Given $\bm{x},\bm{y} \in \R^n$, we say that $\bm{y}$ \textbf{majorizes} $\bm{x}$, equivalently written $\bm{x} \preceq \bm{y}$ if one of the following equivalent conditions holds:
    \begin{enumerate}
        \item there is a Hermitian matrix with diagonal entries $\bm{x}$ and eigenvalues $\bm{y}$,
        \item $\|\bm{x}-\alpha\bm{1}\|_1 \leq \|\bm{y}-\alpha\bm{1}\|_1$ for all $\alpha \in \R$,
        \item $\sum_{i=1}^n x_i = \sum_{i=1}^n y_i$ and $\sum_{i=1}^k x_i^{\downarrow} \leq \sum_{i=1}^k y_i^{\downarrow}$ for all $k \in [n]$, where $\bm{x}^{\downarrow}$ is $\bm{x}$ with entries sorted in non-increasing order,
        \item there exists $D \in \Mat_n(\bm{1})$ such that $D\bm{y} = \bm{x}$, and
        \item $\conv\left(\sigma \cdot \bm{x} : \sigma \in S_n\right) \subseteq \conv\left(\sigma \cdot \bm{y} : \sigma \in S_n\right)$.
    \end{enumerate}
\end{definition}

Note that condition $(2)$ of \Cref{majorization} implies $\|\bm{a}-\bm{1}\|_1 \leq \|\bm{c}-\bm{1}\|_1$. Therefore by combining the above with \Cref{cap-bound-transpose} and \Cref{thm:rs-cap-bound}, for all $\bm{x} \in \R_{>0}^n$ we have
\begin{align*}
    \frac{p(\bm{x})}{x_1 \cdots x_n} &= \frac{\prod_{i=1}^n (M\bm{x})_i}{x_1 \cdots x_n} \\
        &\geq \cpc_{\bm{1}}\left(\prod_{i=1}^n (M\bm{x})_i\right) = \cpc_{\bm{1}}\left(\prod_{i=1}^n (M^\top\bm{x})_i\right) \\
        &\geq \left(1 - \frac{\|\bm{a}-\bm{1}\|_1}{2}\right)^n \geq \left(1 - \frac{\|\bm{c}-\bm{1}\|_1}{2}\right)^n
\end{align*}
whenever $\|\bm{c}-\bm{1}\|_1 < 2$. This implies
\begin{equation} \label{eq:MD-cap-bound}
    \cpc_{\bm{1}}(p) \geq \left(1 - \frac{\|\bm{c}-\bm{1}\|_1}{2}\right)^n.
\end{equation}

\paragraph{Step 2: Apply \Cref{thm:diagonal-minimizers}}

We note that $\MD$ is a symmetric multilinear form, and that $\MatTup_n(C)$ is a compact convex $S_n$-invariant set. Thus to apply \Cref{thm:diagonal-minimizers}, we need to study the relative boundary of $\MatTup_n(C)$. A tuple $(A_1,\ldots,A_n)$ is on the relative boundary of $\MatTup_n(C)$ if and only if $A_i$ is singular for some $i \in [n]$. In this case, we have
\[
    p(\bm{e}_i) = \det(A_i) = 0,
\]
which by \Cref{boundary-cap-bound} implies
\begin{equation} \label{eq:MD-rel-boundary}
    \frac{\MD(A_1, \ldots, A_n)}{\cpc_{\bm{1}}(p)} \geq \frac{(n-2)^{n-2}}{n(n-1)^{2n-3}}.
\end{equation}
On the other hand, we compute
\[
    \MD\left(\frac{C}{n}, \ldots, \frac{C}{n}\right) = \frac{\det(C)}{n^n} = \frac{c_1 \cdots c_n}{n^n}.
\]
Now if $\bm{c}$ is such that
\[
    \frac{(n-2)^{n-2}}{n(n-1)^{2n-3}} \left(1 - \frac{\|\bm{c}-\bm{1}\|_1}{2}\right)^n > \frac{c_1 \cdots c_n}{n^n},
\]
we combine \eqref{eq:MD-cap-bound} and \eqref{eq:MD-rel-boundary} to obtain
\[
    \MD(A_1,\ldots,A_n) > \frac{c_1 \cdots c_n}{n^n} = \MD\left(\frac{C}{n}, \ldots, \frac{C}{n}\right)
\]
for all $(A_1,\ldots,A_n)$ on the relative boundary of $\MatTup_n(C)$. Therefore we can apply \Cref{thm:diagonal-minimizers} to conclude that all minimizers of $\MD$ over $\MatTup_n(C)$ are of the form $(A,\ldots,A)$. The only such element of $\MatTup_n(C)$ is $\frac{1}{n} (C,\ldots, C)$, and thus this uniquely minimizes $\MD$ over $\MatTup_n(C)$.

\section{Unique Minimizer: SLC Polynomials} \label{sec:proof-SLC-rank-one-minimizer}

The point of this section is to prove \Cref{main-SLC-unique-min}; that is, to generalize to strongly log-concave polynomials the fact that the permanent is uniquely minimized over doubly stochastic matrices by the multiple of the all-ones matrix. An analogous fact was proven for doubly stochastic real stable polynomials in \cite{Gur07VdW}. Let us explicitly recall \Cref{main-SLC-unique-min}.

\begin{theorem} \label{section-SLC-unique-min}
    Let $p \in \R_{\geq 0}[x_1,\ldots,x_n]$ be $n$-homogeneous and SLC. If
    \[
        \partial_{x_1} \cdots \partial_{x_n} p(\bm{x}) = \frac{n!}{n^n} \cpc_{\bm{1}}(p) > 0,
    \]
    then for $\bm{v} = \frac{\nabla p(\bm{1})}{n}$ we have $p(\bm{x}) = \left(\sum_i v_i x_i\right)^n$.
\end{theorem}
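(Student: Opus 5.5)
The plan is to follow the inductive proof of \Cref{thm:Gurvits-SLC} and track where equality can hold. Throughout write $\partial^{\bm 1}p := \partial_{x_1}\cdots\partial_{x_n}p$; it is a constant because $p$ is $n$-homogeneous. Equality in that proof must propagate through every inductive step, and the resulting rigidity, combined with concavity of $p^{1/n}$ (\Cref{alt-slc-def-1/d}), should force $p$ to be a power of a linear form.

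\textbf{Step 1: normalize to the doubly stochastic case.} First I will show that the infimum defining $\cpc_{\bm 1}(p)$ is attained at some $\bm{x}^* > \bm 0$. If it were not attained, a minimizing sequence would escape toward a face of the orthant. I would then argue, as in the degree-dropping mechanism of \Cref{boundary-cap-bound}, that the coefficient-to-capacity ratio is then strictly larger than $\frac{n!}{n^n}$, which is a contradiction. I expect this argument to need some care for SLC polynomials.

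Granting attainment, replace $p$ by $p(\bm{x}^*\odot\bm{x})/p(\bm{x}^*)$. This is still SLC, the equality hypothesis is unchanged (both sides scale by the same factor), and the capacity now equals $1$ and is attained at $\bm 1$. First-order optimality of $\log p(\bm x) - \sum_i \log x_i$ at $\bm 1$ gives $\nabla p(\bm 1) = \bm 1$, so $p$ is doubly stochastic. It then suffices to show $p = \left(\frac{x_1+\cdots+x_n}{n}\right)^n$, because undoing the external field gives $(\sum_i v_ix_i)^n$. Here $\bm v = \nabla p(\bm 1)/n$ is forced by Euler's identity.

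\textbf{Step 2: induct on $n$ via the univariate step.} Let $q = \partial_{x_n}p|_{x_n=0}$, which is $(n-1)$-homogeneous and SLC. Fix $x_1,\dots,x_{n-1}>0$. By \Cref{slc-univariate-eval}, $f(t)=p(x_1,\dots,x_{n-1},t)$ is $n$-Newton, and the standard univariate estimate gives $f'(0) \ge \left(\frac{n-1}{n}\right)^{n-1}\inf_{t>0} f(t)/t$. This yields $\cpc_{\bm 1}(q) \ge \left(\frac{n-1}{n}\right)^{n-1}\cpc_{\bm 1}(p)$.

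Since $\partial^{\bm 1}p = \partial_{x_1}\cdots\partial_{x_{n-1}} q$, the degree-$(n-1)$ bound for $q$ together with the hypothesis forces equality both in the capacity bound for $q$ and in the univariate inequality at the minimizer of $q$. By induction, $q$ is a constant times $(\bm w\cdot \bm x)^{n-1}$. This is the ``partial derivatives are again doubly stochastic'' phenomenon mentioned in the introduction.

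To make this work I must also characterize equality in the $n$-Newton univariate estimate: it holds only when $f(t)=a(1+bt)^n$. I would prove this by retracing the Newton-inequality chain $f_k^2\ge f_{k-1}f_{k+1}$, in which equality forces the coefficients $f_k$ to form a geometric sequence. By the symmetry of the doubly stochastic setup, the same conclusion holds for every coordinate $i$ in place of $x_n$.

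\textbf{Step 3: recover $p$.} Since $p^{1/n}$ is concave on $\R^n_{>0}$ and $\nabla p(\bm 1)=\bm 1$, its tangent plane at $\bm 1$ is $\frac{1}{n}\sum_i x_i$ (using $p(\bm 1)=1$ and Euler). Concavity then gives $p(\bm x)\le \left(\frac{x_1+\cdots+x_n}{n}\right)^n$ on the orthant.

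The equality conditions from Step 2 say that along lines through the relevant minimizers in each coordinate direction, $p$ restricts to a perfect $n$-th power of a linear function. So $p^{1/n}$ is affine along these lines and touches the tangent plane along them. I would use these affine pieces to show that the set where $p^{1/n}$ coincides with its tangent plane has nonempty interior. Equality then holds on an open set, and hence $p = \left(\frac{x_1+\cdots+x_n}{n}\right)^n$ as polynomials.

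\textbf{Main obstacle.} I expect the main difficulty to be exactly this last propagation, from equality at one minimizing point per coordinate direction to equality of polynomials. If it fails directly, my first fallback is to use the inductive formula for $q_i = \partial_{x_i}p|_{x_i=0}$ for every $i$ to pin down all coefficients of $p$ not involving $x_i^n$ in any variable, then fix the pure powers $x_i^n$ using $p\le\left(\frac{x_1+\cdots+x_n}{n}\right)^n$ together with $p(\bm 1)=1$. The attainment issue in Step 1 is the secondary difficulty.
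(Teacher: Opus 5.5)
Your plan follows the paper's outline: equality propagates through the inductive capacity chain, the univariate equality case forces $(a+bt)^n$, and concavity of $p^{1/n}$ finishes. What is missing is the ingredient that makes the equality lines usable. You never show that $p$ has enough support for $\cpc_{\bm 1}(p)$ to be attained, and, more importantly, \emph{uniquely} attained (up to scaling) at $\bm 1$ after normalization. Your Step~1 appeal to the mechanism of \Cref{boundary-cap-bound} does not carry over. That corollary is for real stable polynomials (it comes from \Cref{thm:Gurvits}), and a minimizing sequence escaping to a face is tied to a degree drop only through a support argument you do not give. The paper's route is as follows. \Cref{Gurvits-strengthening} carries the degree-sensitive constant $\gamma_{n,d_i}$, which is strictly larger than $1$ when $d_i<n$ (\Cref{univar-Newton-cap}). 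So equality in the iterated chain forces $d_i=n$ for every $i$, i.e.\ every $x_i^n$ appears in $p$. The SLC support lemma \Cref{slc-support} then gives full support. Full support gives attainment (\Cref{compact-capacity}) and, after the doubly stochastic normalization, a unique minimizer at $\bm 1$ (\Cref{ds-cap-unique-min}).

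Your Step~3 depends on that uniqueness without saying so. Your equality line $\{(\bm y,t):t>0\}$, with $\bm y$ the minimizer of $\cpc_{\bm 1}(q)$, passes through a capacity minimizer $\bm z=(\bm y,t_0)$ of $p$, so $p(\bm z)=\bm z^{\bm 1}$ after normalization. By AM--GM, $\bigl(\frac1n\sum_i z_i\bigr)^n\ge \bm z^{\bm 1}$, with equality iff $\bm z\propto\bm 1$. Since both restrictions to the line are affine, $p^{1/n}$ can touch the tangent plane $\frac1n\sum_i x_i$ on that line only if $\bm y\propto\bm 1$. This is the ``partial derivatives are again doubly stochastic'' step, which you name but do not prove; your induction yields $q=c(\bm w\cdot\bm x)^{n-1}$ with $\bm w$ unspecified. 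The paper proves $\bm y=\bm 1$ from uniqueness of the minimizer of $\cpc_{\bm 1}(p)$. It then reads off $p(\bm e_i)=n^{-n}$ from $p(\bm 1,t)=\bigl(\frac{n-1}{n}+\frac tn\bigr)^n$ and gets the matching lower bound from concavity on the simplex. Your contact-set idea would also work at that point, since the contact set of a concave function with a supporting affine function is convex and would contain the segments $\bm 1+s\bm e_i$.

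Your fallback does not work: for $n\ge 4$, monomials such as $x_1^2x_2^2$ have no exponent equal to $1$, so the $q_i$ do not determine them. You can repair the gap within your own framework. Run the induction \emph{before} normalizing, and for every $i$. Each $q_i$ is then a power of a strictly positive linear form, so $p_{\bm\kappa}>0$ whenever some $\kappa_i=1$. For $n\ge 3$ these exponents include all $\bm 1+\bm e_i-\bm e_j$, which gives attainment as in the proof of \Cref{compact-capacity}. They also affinely span $\{\sum_i\kappa_i=n\}$, which makes $\log p(e^{\bm t})-\langle\bm t,\bm 1\rangle$ strictly convex transversal to $\bm 1$ and hence gives uniqueness. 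The case $n=2$ can be checked by hand.

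A minor point: identifying the linear form as $\nabla p(\bm 1)/n$ requires $p(\bm 1)=1$, as in \Cref{main-SLC-unique-min}; Euler's identity alone does not force it.
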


\subsection{Univariate $n$-Newton Polynomials}

To prove \Cref{section-SLC-unique-min}, we first need a number of facts about the class of $n$-Newton univariate polynomials derived from SLC polynomials. Recall the definition of $n$-Newton in \Cref{def:n-Newton}. The main result of this section is \Cref{univar-Newton-cap}, which gives a refined capacity bound for $n$-Newton polynomials based on their degree. To prove this, we first need a few basic results on hte capacity of $n$-Newton polynomials.

% \begin{lemma} \label{slc-univariate-eval}
%     If $p \in \R_{\geq 0}[x_1,\ldots,x_n]$ is $n$-homogeneous and SLC, then $f(t) = p(x_1,\ldots,x_{n-1},t)$ is $n$-Newton for all $\bm{x} \in \R_{\geq 0}^{n-1}$.
% \end{lemma}
% \begin{proof}
%     TODO: see multivar Newton ineq, see other recent stuff too
% \end{proof}

\begin{lemma} \label{univar-power-of-linears-cap}
    For any $a > 0$, we have that
    \[
        \cpc_1\left(\left(1+\frac{a}{n} t\right)^n\right) = a\left(\frac{n}{n-1}\right)^{n-1},
    \]
    with the optimum achieved at $t = \frac{n}{a(n-1)}$.
\end{lemma}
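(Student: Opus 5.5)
This is a one-variable calculus computation: minimize $g(t) = (1+\frac{a}{n}t)^n / t$ over $t>0$. Throughout we take $n \geq 2$, since for $n=1$ the infimum is $a$, approached as $t\to\infty$, and the formula degenerates.

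First, $g$ is continuous and positive on $(0,\infty)$. Moreover $g(t)\to\infty$ both as $t\to 0^+$ (the numerator tends to $1$) and as $t\to\infty$ (the numerator grows like $t^n$ with $n\geq 2$). Hence the infimum is attained at an interior critical point.

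Next, work with the logarithm $h(t) = n\log(1+\frac{a}{n}t) - \log t$. Its derivative is
\[
h'(t) = \frac{a}{1+\frac{a}{n}t} - \frac{1}{t}.
\]
Setting $h'(t)=0$ gives $at = 1 + \frac{a}{n}t$, that is, $at\cdot\frac{n-1}{n} = 1$, so $t^* = \frac{n}{a(n-1)}$. This critical point is unique. Combined with the boundary behavior, this shows $t^*$ is the global minimizer.

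Finally, evaluate at $t^*$. We have $1+\frac{a}{n}t^* = 1+\frac{1}{n-1} = \frac{n}{n-1}$. Therefore
\[
g(t^*) = \left(\frac{n}{n-1}\right)^n \cdot \frac{a(n-1)}{n} = a\left(\frac{n}{n-1}\right)^{n-1}.
\]

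There is no real obstacle here. The only care needed is to justify that the critical point is the global minimum, which follows from the boundary limits, or alternatively from the convexity of $h$ in the variable $\log t$. The case $n=1$ should be excluded or noted separately, as above.
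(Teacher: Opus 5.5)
Your proof is correct and takes the same route as the paper: differentiate, find the unique critical point $t = \frac{n}{a(n-1)}$, and evaluate there. Your boundary-limit argument for global minimality, and your remark that $n \geq 2$ is needed, make explicit what the paper leaves implicit in the sign change of its factored derivative.
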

\begin{proof}
    We compute
    \[
        \partial_t \left[\frac{1}{t} \left(1 + \frac{a}{n} t\right)^n\right] = \frac{1}{t^2}\left(1 + \frac{a}{n}\right)^{n-1} \left(at \cdot \frac{n-1}{n} -  1\right),
    \]
    and thus the derivative is $0$ exactly at $t = \frac{n}{a(n-1)}$. Therefore $\cpc_1\left(\left(1+\frac{a}{n}t\right)^n\right)$ is optimized at $t = \frac{n}{a(n-1)}$ and
    \[
        \cpc_1\left(\left(1 + \frac{a}{n} t\right)^n\right) = \frac{a(n-1)}{n} \left(\frac{n}{n-1}\right)^n = a \left(\frac{n}{n-1}\right)^{n-1}.
    \]
\end{proof}

\begin{lemma} \label{strict-cap-comparison}
    Let $f,g \in \R_{\geq 0}[t]$ be such that $f$ is of degree at least $2$ and $g(0) > 0$. Then $\cpc_1(f)$ is optimized at some $t_0 > 0$, and if $g(t) < f(t)$ for all $t > 0$ then $\cpc_1(g) < \cpc_1(f)$.
\end{lemma}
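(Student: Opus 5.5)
The plan is to handle the two claims separately. Both come down to the fact that a continuous function on $(0,\infty)$ which tends to $+\infty$ at both ends attains its infimum.

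\emph{Attainment.} I would write $f(t) = \sum_{k=0}^m f_k t^k$ with $f_k \geq 0$ and $m \geq 2$, so that
\[
    \frac{f(t)}{t} = \frac{f_0}{t} + f_1 + f_2 t + \cdots + f_m t^{m-1}.
\]
Since $f_m > 0$ and $m - 1 \geq 1$, this tends to $+\infty$ as $t \to \infty$. If $f_0 > 0$, it also tends to $+\infty$ as $t \to 0^+$. The map $t \mapsto f(t)/t$ is continuous on $(0,\infty)$, so its infimum $\cpc_1(f)$ is then attained at some $t_0 > 0$: it suffices to restrict to a compact interval $[\delta, R]$ outside of which the function exceeds, say, $f(1)/1$.

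One caveat has to be addressed here. If $f_0 = 0$, then $f(t)/t = f_1 + f_2 t + \cdots$ is strictly increasing, and the infimum $f_1$ is not attained at any $t_0 > 0$. So the attainment claim needs $f(0) > 0$. In the situation of the lemma this is exactly what the hypothesis $g(0) > 0$ supplies once $g < f$ on $(0,\infty)$: letting $t \to 0^+$ in $g(t) < f(t)$ gives $f(0) \geq g(0) > 0$ by continuity of polynomials. I would state this explicitly in the proof. That is, in the comparison part of the lemma the assumption $f(0) > 0$ is automatic, and attainment is proved under it.

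\emph{Strict comparison.} With $t_0 > 0$ a minimizer of $f(t)/t$, I would compare at that single point:
\[
    \cpc_1(g) \leq \frac{g(t_0)}{t_0} < \frac{f(t_0)}{t_0} = \cpc_1(f).
\]
The strict inequality uses $g(t_0) < f(t_0)$ and $t_0 > 0$.

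The only real subtlety is making sure a minimizer exists, since the strict inequality would fail if the infimum were only approached as $t \to 0^+$ or $t \to \infty$. The growth at $\infty$ comes from $\deg f \geq 2$, and the blow-up at $0$ comes from $f(0) > 0$, which is inherited from $g(0) > 0$. Everything else is immediate.
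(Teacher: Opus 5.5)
Your proof is correct and follows the paper's argument: $\cpc_1(f)$ is attained because $f(t)/t \to +\infty$ at both ends of $(0,\infty)$, and then $g(t_0)/t_0$ is compared with $f(t_0)/t_0$ at that minimizer. Your caveat is also well taken. The paper's proof tacitly assumes $\lim_{t\to 0^+} f(t)/t = +\infty$, i.e.\ $f(0)>0$, so the unconditional attainment claim fails for $f(t)=t^2$. Your deduction of $f(0)\geq g(0)>0$ from $g<f$ on $(0,\infty)$ is exactly what the comparison part needs, and it is harmless in the paper's uses, where the constant terms are $1$.
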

\begin{proof}
    The fact that $\lim_{t \to 0^+} \frac{f(t)}{t} = +\infty = \lim_{t \to +\infty} \frac{f(t)}{t}$ implies the first claim. Now let $t_0 > 0$ be a value of $t$ which optimizes $\cpc_1(f)$. Then we have
    \[
        \cpc_1(f) = \frac{f(t_0)}{t_0} > \frac{g(t_0)}{t_0} \geq \cpc_1(g),
    \]
    which implies the second claim.
\end{proof}

We now prove the mian result of the section. To do this, we make use of the \textbf{truncation} operator, which ``cuts off'' terms of a polynomial above a certain degree. Formally, for $d \leq n$ we define
\[
    \mathrm{trunc}_d\left(\sum_{k=0}^n c_k t^k\right) = \sum_{k=0}^d c_k t^k.
\]

\begin{theorem} \label{univar-Newton-cap}
    For all $d,n$ such that $1 \leq d \leq n$ there exist constants $\gamma_{n,d} \geq 1$ such that:
    \begin{enumerate}
        \item for all $n$-Newton polynomials $f(t)$ of degree at most $d$ such that $\partial_t f(0) > 0$ we have
        \[
            \partial_t f(0) \geq \gamma_{n,d} \left(\frac{n-1}{n}\right)^{n-1} \cpc_1(f),
        \]
        \item $1 = \gamma_{n,n} < \gamma_{n,n-1} < \cdots < \gamma_{n,2} < \gamma_{n,1} = \left(\frac{n}{n-1}\right)^{n-1}$, and
        \item assuming $d \geq 2$ (and $\partial_t f(0) > 0$), the inequality of $(1)$ is an equality if and only if $f(t) = \mathrm{trunc}_d\left[(a+bt)^n\right]$ for some $a,b > 0$.
    \end{enumerate}
\end{theorem}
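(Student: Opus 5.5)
The plan is to define $\gamma_{n,d}$ as the sharp constant, computed from the extremal polynomial $\mathrm{trunc}_d\left[(1+t/n)^n\right]$, and to show via the Newton inequalities that every admissible $f$ is coefficientwise dominated by this extremal polynomial. Concretely, set
\[
    c_{n,d} := \cpc_1\left(\mathrm{trunc}_d\left[\left(1+\tfrac{t}{n}\right)^n\right]\right), \qquad \gamma_{n,d} := \left(\frac{n}{n-1}\right)^{n-1} \frac{1}{c_{n,d}}.
\]
With this choice, $(1)$ is equivalent to $\cpc_1(f) \leq c_{n,d}\, \partial_t f(0)$.

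\emph{Normalization.} First dispose of the case $f(0)=0$. Then $f(t)/t \to \partial_t f(0)$ as $t \to 0^+$, so $\cpc_1(f) \leq \partial_t f(0)$. Since (by $(2)$, proved below) $\gamma_{n,d} \leq \gamma_{n,1} = \left(\frac{n}{n-1}\right)^{n-1}$, we get $\gamma_{n,d}\left(\frac{n-1}{n}\right)^{n-1} \leq 1$, so $(1)$ holds. When $d \geq 2$ the strict inequality $\gamma_{n,d} < \gamma_{n,1}$ makes $(1)$ strict, so equality never occurs in this case.

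If $f(0) > 0$, both sides of $(1)$ scale identically under $f(t) \mapsto \mu^{-1} f(\lambda t)$ (both pick up the factor $\lambda/\mu$), and the $n$-Newton property and the degree are preserved. So we may assume $f_0 = 1$ and $f'(0) = n f_1 = 1$, i.e. $f_1 = 1/n$.

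\emph{Coefficient domination.} This is the key step, and the only delicate part is internal zeros. If $f_k = 0$ for some $1 \leq k$ while $f_{k-1} > 0$, then $f_{k-1} f_{k+1} \leq f_k^2 = 0$ forces $f_{k+1} = 0$. Hence the nonzero coefficients form an initial segment $f_0,\ldots,f_m$ with $m \leq d$. On this segment the Newton inequalities say the ratios $f_{k+1}/f_k$ are non-increasing, so each is at most $f_1/f_0 = 1/n$. Therefore $f_k \leq n^{-k}$ for all $k \leq d$, i.e.
\[
    f \leq \mathrm{trunc}_d\left[(1+t/n)^n\right] \quad \text{coefficientwise.}
\]
Consequently $\cpc_1(f) \leq c_{n,d}$, which is $(1)$.

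\emph{Equality case $(3)$.} Suppose $d \geq 2$ and $f \neq \mathrm{trunc}_d[(1+t/n)^n]$. Then some coefficient is strictly smaller, so $f(t) < \mathrm{trunc}_d[(1+t/n)^n]$ for every $t>0$. The truncation has degree $d \geq 2$ and $f(0) = 1 > 0$, so \Cref{strict-cap-comparison} gives $\cpc_1(f) < c_{n,d}$, a strict inequality. Undoing the normalization, equality holds exactly for $f = \mu\,\mathrm{trunc}_d[(1+\lambda t/n)^n]$, which is the form $\mathrm{trunc}_d[(a+bt)^n]$ with $a,b>0$.

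\emph{Monotonicity $(2)$.}
\begin{itemize}
    \item By \Cref{univar-power-of-linears-cap} with $a=1$, we have $c_{n,n} = \left(\frac{n}{n-1}\right)^{n-1}$, so $\gamma_{n,n} = 1$.
    \item For $d=1$ the polynomial is $1+t$, and $\cpc_1(1+t) = \inf_{t>0}(1+t^{-1}) = 1$. Hence $\gamma_{n,1} = \left(\frac{n}{n-1}\right)^{n-1}$.
    \item For $2 \leq d \leq n$, the polynomial $\mathrm{trunc}_{d-1}$ is pointwise strictly below $\mathrm{trunc}_d$ on $t>0$, and $\mathrm{trunc}_d$ has degree $d \geq 2$. \Cref{strict-cap-comparison} then gives $c_{n,d-1} < c_{n,d}$, i.e. $\gamma_{n,d-1} > \gamma_{n,d}$.
\end{itemize}
All $\gamma_{n,d} \geq 1$ follows because $\gamma_{n,n} = 1$ and the sequence increases as $d$ decreases.

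The main obstacle is the coefficient domination step, specifically verifying that the Newton inequalities propagate the bound $f_{k+1}/f_k \leq 1/n$ correctly when some coefficients vanish. The zero-propagation argument above is what handles this; everything after it is bookkeeping via \Cref{strict-cap-comparison}.
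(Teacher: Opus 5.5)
Your overall route is the paper's. You use the same constant $\gamma_{n,d}$: your $\left(\frac{n}{n-1}\right)^{n-1}/c_{n,d}$ equals the paper's ratio $\cpc_1((1+t)^n)/\cpc_1(\mathrm{trunc}_d[(1+t)^n])$ after rescaling $t$. You also use the same coefficientwise domination by a truncated binomial, and the same appeal to \Cref{strict-cap-comparison} for both strictness and monotonicity. Your treatment of $f(0)=0$ is fine, including the point that equality cannot occur there when $d\ge 2$. The problem is exactly the step you flagged as the main obstacle.

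\textbf{The zero-propagation argument does not work.} From $f_{k-1}>0$ and $f_k=0$ you correctly get $f_{k+1}=0$. But the next inequality, $f_{k+1}^2 \ge f_k f_{k+2}$, reads $0\ge 0$ and places no constraint on $f_{k+2}$. So ``the nonzero coefficients form an initial segment'' does not follow, and the bound $f_k\le n^{-k}$ is unjustified past a gap of length two.

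In fact, with \Cref{def:n-Newton} read literally, statement $(1)$ is false. Take $n\ge 4$, $d\ge 4$ and $f(t)=1+t+Ct^4$. Then $f_2=f_3=0$, so every inequality $f_k^2\ge f_{k-1}f_{k+1}$ holds, and $\partial_t f(0)=1$. Yet $\cpc_1(f)\ge \inf_{t>0}(t^{-1}+Ct^3)=\tfrac{4}{3}(3C)^{1/4}\to\infty$, so $(1)$ fails for large $C$.

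\textbf{The missing hypothesis is that the coefficient sequence has no internal zeros.} The paper's own proof tacitly uses this too: it asserts $c_k\le(\partial_t f(0)/n)^k$ directly ``since $f$ is $n$-Newton.'' The hypothesis does hold in every application in the paper. For $p$ SLC and $x_1,\ldots,x_{n-1}\ge 0$, the polynomial $p(sx_1,\ldots,sx_{n-1},t)$ is a bivariate Lorentzian polynomial. Its binomially normalized coefficients are log-concave with no internal zeros. Equivalently, the support of $p$ is M-convex, so its projection onto the $x_n$-coordinate is an interval.

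With ``no internal zeros'' added to the definition of $n$-Newton, your ratio argument goes through. The ratios $f_{k+1}/f_k$ are non-increasing on the support $\{0,\ldots,m\}$, hence at most $f_1/f_0=1/n$. The rest of your proof is then correct as written.
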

\begin{proof}
    We define
    \[
        \gamma_{n,d} = \frac{\cpc_1\left(\left(1 + t\right)^n\right)}{\cpc_1\left(\mathrm{trunc}_d\left[\left(1 + t\right)^n\right]\right)}.
    \]
    Note that $\gamma_{n,n} = 1$ and $\gamma_{n,1} = \left(\frac{n}{n-1}\right)^{n-1}$ by \Cref{univar-power-of-linears-cap}, and $\gamma_{n,n} < \gamma_{n,n-1} < \cdots < \gamma_{n,2} < \gamma_{n,1}$ by \Cref{strict-cap-comparison}. This implies $(2)$ and the $(\impliedby)$ direction of $(3)$.

    Now if $d=1$ or $f(0)=0$ then $\partial_t f(0) = \cpc_1(f)$. Thus for the remainder of the proof, we may assume without loss of generality that $f(0) = 1$ and $d \geq 2$. Since $f$ is $n$-Newton we have
    \begin{align*}
        f(t) &= \binom{n}{0} + \binom{n}{1} \frac{\partial_t f(0)}{n} t + \sum_{k=2}^d \binom{n}{k} c_k t^k \\
            &\leq \binom{n}{0} + \binom{n}{1} \frac{\partial_t f(0)}{n} t + \sum_{k=2}^d \binom{n}{k} \left(\frac{\partial_t f(0)}{n}\right)^k t^k \\
            &= \mathrm{trunc}_d\left[\left(1 + \frac{\partial_t f(0)}{n} t\right)^n\right].
    \end{align*}
    Note that if $f(t)$ is not equal to $g(t) := \mathrm{trunc}_d\left[\left(1 + \frac{\partial_t f(0)}{n} t\right)^n\right]$ then the above inequality is strict for all $t > 0$. Thus by \Cref{strict-cap-comparison} we have
    \[
        \cpc_1(f) \leq \cpc_1\left(\mathrm{trunc}_d\left[\left(1 + \frac{\partial_t f(0)}{n} t\right)^n\right]\right),
    \]
    with strict inequality unless $f(t) = \mathrm{trunc}_d\left[\left(1 + \frac{\partial_t f(0)}{n} t\right)^n\right]$. By \Cref{univar-power-of-linears-cap}, we also have $\cpc_1\left(\left(1 + \frac{\partial_t f(0)}{n} t\right)^n\right) = \partial_t f(0) \left(\frac{n}{n-1}\right)^{n-1}$. Since
    \begin{align*}
        \frac{\cpc_1\left(\left(1 + \frac{\partial_t f(0)}{n} t\right)^n\right)}{\cpc_1\left(\mathrm{trunc}_d\left[\left(1 + \frac{\partial_t f(0)}{n} t\right)^n\right]\right)} &= \frac{\frac{\partial_t f(0)}{n} \cpc_1\left(\left(1 + t\right)^n\right)}{\frac{\partial_t f(0)}{n} \cpc_1\left(\mathrm{trunc}_d\left[\left(1 + t\right)^n\right]\right)} \\
            &= \frac{\cpc_1\left(\left(1 + t\right)^n\right)}{\cpc_1\left(\mathrm{trunc}_d\left[\left(1 + t\right)^n\right]\right)} \\
            &= \gamma_{n,d},
    \end{align*}
    we therefore have
    \begin{align*}
        \partial_t f(0) \left(\frac{n}{n-1}\right)^{n-1} &= \cpc_1\left(\left(1 + \frac{\partial_t f(0)}{n} t\right)^n\right) \\
            &= \gamma_{n,d} \cpc_1\left(\mathrm{trunc}_d\left[\left(1 + \frac{\partial_t f(0)}{n} t\right)^n\right]\right) \\
            &\geq \gamma_{n,d} \cpc_1(f),
    \end{align*}
    with strict inequality unless $f(t) = \mathrm{trunc}_d\left[\left(1 + \frac{\partial_t f(0)}{n} t\right)^n\right]$. By rearranging, this implies $(1)$ and also the $(\implies)$ direction of $(3)$.
\end{proof}

\subsection{Support and Capacity of SLC Polynomials}

In this section we prove various results on the interaction between support, capacity bounds, and capacity minimizers for SLC polynomials. The main result of this section is \Cref{full-support-from-derivs}, which gives a capacity-based sufficient condition for an SLC polynomial to have full support. Note that we do not directly use \Cref{full-support-from-derivs} in \Cref{subsec:proof-SLC-unique-min}, but instead give the argument used in the proof. We still give \Cref{full-support-from-derivs} explicitly to unify this section.

First is a more refined version of a derivative capacity bound due to Gurvits, which was originally used to prove \Cref{thm:Gurvits-SLC}.

\begin{theorem} \label{Gurvits-strengthening}
    Let $p \in \R_{\geq 0}[x_1,\ldots,x_n]$ be $n$-homogeneous and SLC, and let $d_i$ be the degree of $x_i$ in $p$. If $\partial_{x_1} \cdots \partial_{x_n} p(\bm{x}) > 0$ then
    \[
        \cpc_{\bm{1}}\left(\left.\partial_{x_i}\right|_{x_i=0} p(\bm{x})\right) \geq \gamma_{n,d_i} \left(\frac{n-1}{n}\right)^{n-1} \cpc_{\bm{1}}(p),
    \]
    where $\gamma_{n,d_i} \geq 1$ is as defined in \Cref{univar-Newton-cap}.
\end{theorem}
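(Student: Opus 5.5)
Fix $i$, and for fixed $\bm{y} = (x_j)_{j \neq i} \in \R_{>0}^{n-1}$ consider the univariate polynomial $f_{\bm{y}}(t) = p(x_1,\ldots,x_{i-1},t,x_{i+1},\ldots,x_n)$. Since $p$ is $n$-homogeneous and SLC, \Cref{slc-univariate-eval} (after relabeling variables) shows that $f_{\bm{y}}$ is $n$-Newton. Its degree is at most $d_i$, because $d_i$ is the degree of $x_i$ in $p$. Moreover $\partial_t f_{\bm{y}}(0) = \left.\partial_{x_i}\right|_{x_i=0} p(\bm{y})$. This is positive for $\bm{y} > \bm{0}$: the hypothesis $\partial_{x_1}\cdots\partial_{x_n} p > 0$ says the monomial $x_1 \cdots x_n$ has positive coefficient, so $\left.\partial_{x_i}\right|_{x_i=0} p$ contains $\prod_{j\neq i} x_j$ with positive coefficient and all its other coefficients are nonnegative. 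Thus part (1) of \Cref{univar-Newton-cap} applies with $d = d_i$ and gives, for every $\bm{y} > \bm{0}$,
\[
    \left.\partial_{x_i}\right|_{x_i=0} p(\bm{y}) \geq \gamma_{n,d_i}\left(\frac{n-1}{n}\right)^{n-1} \cpc_1(f_{\bm{y}}).
\]
The degenerate case $d_i = 1$ is covered by part (1) as well; there $\gamma_{n,1}((n-1)/n)^{n-1} = 1$ and $\partial_t f_{\bm{y}}(0)=\cpc_1(f_{\bm{y}})$ directly. We have $d_i \geq 1$ since $x_1\cdots x_n$ appears in $p$.

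Next I will compare $\cpc_1(f_{\bm{y}})$ with $\cpc_{\bm{1}}(p)$. By definition of $\bm{1}$-capacity, for every $t > 0$ we have $p(\bm{y},t) \geq \cpc_{\bm{1}}(p)\, t \prod_{j\neq i} x_j$, where $(\bm{y},t)$ means $t$ is inserted in slot $i$. Dividing by $t$ and taking the infimum over $t>0$ gives
\[
    \cpc_1(f_{\bm{y}}) \geq \cpc_{\bm{1}}(p) \prod_{j \neq i} x_j.
\]
Combining the two displays, dividing by $\prod_{j\neq i} x_j$, and taking the infimum over $\bm{y} \in \R_{>0}^{n-1}$ yields
\[
    \cpc_{\bm{1}}\left(\left.\partial_{x_i}\right|_{x_i=0} p\right) \geq \gamma_{n,d_i}\left(\frac{n-1}{n}\right)^{n-1}\cpc_{\bm{1}}(p),
\]
which is the claim.

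The only step needing care is checking the hypotheses of \Cref{univar-Newton-cap} uniformly in $\bm{y}$. The degree bound $\deg f_{\bm{y}} \leq d_i$ must hold for all $\bm{y}$, which is immediate from the definition of $d_i$; no exact-degree assumption is needed, since the theorem allows degree at most $d$. The positivity $\partial_t f_{\bm{y}}(0) > 0$ was checked above. All the substantive work is already in \Cref{univar-Newton-cap}, and no optimization over $\bm{y}$ is required because the bound holds pointwise. I therefore expect no real obstacle beyond this bookkeeping.
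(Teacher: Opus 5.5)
Your proposal is correct and follows essentially the same route as the paper: restrict $p$ to the univariate $n$-Newton polynomial in $x_i$, apply part (1) of \Cref{univar-Newton-cap} pointwise in the other variables, and take the infimum over them. The only difference is that you spell out two steps the paper leaves implicit, namely the positivity of $\partial_t f_{\bm{y}}(0)$ and the bound $\cpc_1(f_{\bm{y}}) \geq \cpc_{\bm{1}}(p) \prod_{j \neq i} x_j$.
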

\begin{proof}
    Without loss of generality, we assume $i=n$. Thus for all $\bm{x} \in \R_{>0}^{n-1}$, the polynomial $f(t) = p(x_1,\ldots,x_{n-1},t)$ is of degree $d_n$. Further, $\partial_t f(0) > 0$ by assumption, and $f$ is $n$-Newton by \Cref{slc-univariate-eval}.

    With this, by \Cref{univar-Newton-cap} we have
    \[
        \partial_t f(0) \geq \gamma_{n,d_n} \left(\frac{n-1}{n}\right)^{n-1} \cpc_1(f)
    \]
    which implies
    \[
        \cpc_{\bm{1}}\left(\left.\partial_{x_n}\right|_{x_n=0} p(\bm{x})\right) \geq \gamma_{n,d_n} \left(\frac{n-1}{n}\right)^{n-1} \cpc_{\bm{1}}(p)
    \]
    by taking $\inf$ over $\bm{x} \in \R_{>0}^{n-1}$ on both sides.
\end{proof}

Now we prove a basic support lemma for SLC polynomials. This can be proven using the fact that SLC polynomials have M-convex support, but we have decided to give a simpler self-contained proof here.

\begin{lemma} \label{slc-support}
    Let $p \in \R_{\geq 0}[x_1,\ldots,x_n]$ be $n$-homogeneous and SLC. If the coefficient of $x_i^n$ in $p$ is positive for all $i \in [n]$ then $p$ has full support.
\end{lemma}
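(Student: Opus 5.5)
Since $p_{n\bm{e}_i}>0$ for all $i$, the support of $p$ contains every vertex $n\bm{e}_i$ of the simplex $\{\bm\alpha\in\Z_{\geq 0}^n:\sum_i\alpha_i=n\}$. I want to show that every lattice point of this simplex is in the support. The plan is induction on the degree, using the fact that derivatives of SLC polynomials are SLC, and using the $n$-Newton property from \Cref{slc-univariate-eval} to pass positivity along edges.

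I will prove a slightly more general statement by induction on $d$: if $q$ is $d$-homogeneous and SLC in $m$ variables, and the coefficient of $x_i^d$ is positive for every $i$ in some set $I$, then every monomial in the variables $\{x_i\}_{i\in I}$ of degree $d$ appears in $q$.

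The first step is the edge case, which is a two-variable statement. Set $x_k=0$ for $k\notin\{i,j\}$ and $x_i=1$, and consider $f(t)=q(\ldots,1,\ldots,t,\ldots)$. By \Cref{slc-univariate-eval}, $f$ is $d$-Newton, so writing $f(t)=\sum_k\binom{d}{k}f_kt^k$ we have $f_k^2\ge f_{k-1}f_{k+1}$. Specializing nonnegative variables to $0$ is legitimate here, by continuity of the Newton inequalities or directly from the lemma's hypothesis $x\in\R_{\ge0}$. The coefficients $f_k$ are exactly the coefficients of $x_i^{d-k}x_j^k$ in $q$, and we know $f_0,f_d>0$. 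A nonnegative sequence satisfying these inequalities has no internal zeros between positive entries: if $f_k=0$ with $0<k<d$, then $f_{k-1}f_{k+1}=0$, and propagating this in both directions gives a contradiction with $f_0,f_d>0$. Hence every $f_k>0$.

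The second step is the general monomial, handled by induction. Take $\bm\alpha$ with $|\bm\alpha|=d$ supported in $I$, and pick $i$ with $\alpha_i\ge1$. Then $q'=\partial_{x_i}q$ is SLC and $(d-1)$-homogeneous. Its coefficient of $x_j^{d-1}$ is $d$ for $j=i$, and a positive multiple of the coefficient of $x_ix_j^{d-1}$ otherwise. That coefficient is positive by the edge case applied to the pair $\{i,j\}$. So $q'$ satisfies the hypothesis with the same $I$, and by induction its coefficient of $\bm x^{\bm\alpha-\bm e_i}$ is positive. This is a positive multiple of the coefficient of $\bm x^{\bm\alpha}$ in $q$. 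The base case $d=1$ is trivial. Applying the general statement with $d=m=n$ and $I=[n]$ gives full support.

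The main obstacle is rigorously justifying the Newton-inequality step at the boundary $x_k=0$, since the SLC definition is stated on $\R_{>0}^n$. I would handle this by taking limits: the coefficients of $q(\bm x)$ with the other variables set to $\varepsilon>0$ satisfy the Newton inequalities for each $\varepsilon$. Letting $\varepsilon\to0$, these inequalities pass to the edge coefficients, because the edge coefficients are the limits of the coefficients of $f$.
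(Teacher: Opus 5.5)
Your proposal has a genuine gap in the edge step. You claim that a nonnegative sequence with $f_k^2\ge f_{k-1}f_{k+1}$ and $f_0,f_d>0$ has no internal zeros, but this is false. The sequence $(f_0,f_1,f_2,f_3)=(1,0,0,1)$, i.e.\ $f(t)=1+t^3$ with $d=3$, satisfies every inequality. Your propagation stalls because once $f_k=0$, the neighbouring inequalities $f_{k\pm1}^2\ge f_kf_{k\pm2}$ hold trivially and force nothing.

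This cannot be rescued from the $d$-Newton conclusion of \Cref{slc-univariate-eval} alone. The polynomial $x_1^3+x_2^3$ has Newton sequence $(1,0,0,1)$, and every proper derivative of it ($3x_1^2$, $3x_2^2$, $6x_1$, $6x_2$, constants, $0$) is log-concave or zero. Only log-concavity of the cubic itself rules it out. Even the weaker fact your induction actually uses, that the coefficient of $x_ix_j^{d-1}$ in $q$ is positive, fails for this sequence (it is $f_2=0$). By contrast, the boundary issue you flag as the main obstacle is harmless, since \Cref{slc-univariate-eval} already allows $x_k=0$.

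The missing idea is to use log-concavity of a higher-order derivative. Suppose the edge coefficients of $x_i^{d-a}x_j^a$ and $x_i^{d-b}x_j^b$ in $q$ are positive, $m=b-a\ge 2$, and all edge coefficients strictly between them vanish.
\begin{itemize}
\item Then $h=\partial_{x_i}^{d-b}\partial_{x_j}^{a}q$ is SLC and nonzero.
\item On the face $\{x_k=0,\ k\notin\{i,j\}\}$, $h$ restricts to $c_1x_i^m+c_2x_j^m$ with $c_1,c_2>0$.
\item A limit $\varepsilon\to0$ as in your last paragraph (pointwise limits of concave functions are concave) shows this restriction is log-concave on $\R_{>0}^2$.
\item But $t\mapsto\log(c_1+c_2t^m)$ has second derivative $mc_2t^{m-2}\bigl((m-1)c_1-c_2t^m\bigr)/(c_1+c_2t^m)^2>0$ for small $t>0$, a contradiction.
\end{itemize}
Citing M-convexity of supports of Lorentzian polynomials would also work.

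With this repair, the rest of your argument is correct. The paper's Case~1 makes the same Newton-implies-positivity inference, so it does not supply this step either. Your induction on degree, through the unrestricted derivative $\partial_{x_i}q$ with an arbitrary index set $I$, is a cleaner scheme than the paper's. The paper runs an extremal-$\bm\alpha$ induction on $n$ through $\partial_{x_1}|_{x_1=0}\,p$, which as written only certifies monomials with $\alpha_1=1$.
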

\begin{proof}
    We prove this result by induction on $n$. Fix $p$ and define
    \[
        H_p = \{\bm\alpha \in \Delta_n \cap \Z^n : \langle \bm{x}^{\bm\alpha} \rangle p(\bm{x}) = 0\}.
    \]
    So as to obtain a contradiction, suppose $H_p$ is non-empty and let $\bm\alpha$ be the vector which maximizes $\max_{i \in [n]} \alpha_i$ over $H_p$. By possibly permuting variables, we will assume that $\alpha_1 \geq \alpha_2 \geq \cdots \geq \alpha_n$.

    \paragraph{Case 1: $\alpha_3 = 0$ or $n \leq 2$.} In this case $f(t) = p(t,1,0,\ldots,0)$ is $n$-Newton and has strictly positive $t^0$ and $t^n$ coefficients by assumption. The $n$-Newton property then implies the $t^k$ coefficient of $f$ is positive for all $0 \leq k \leq n$, which is a contradiction.

    \paragraph{Case 2: $\alpha_3 > 0$.} In this case $\alpha_1 \leq n-2$, which implies the $x_i^{n-1} x_j$ coefficient of $p$ is strictly positive for all $i,j$ by our assumption on $\bm\alpha$. Thus $\left.\partial_{x_1}\right|_{x_1=0} p(\bm{x})$ is $(n-1)$-variate and $(n-1)$-homogeneous, with strictly positive $x_i^{n-1}$ coefficients for all $i \in [n]$. The inductive hypothesis then implies the $\bm{x}^{\bm\alpha}$ coefficient of $p$ is strictly positive, which is a contradiction.
\end{proof}

The above two results combine to give the mian result of the section.

\begin{corollary} \label{full-support-from-derivs}
    Let $p \in \R_{\geq 0}[x_1,\ldots,x_n]$ be $n$-homogeneous and SLC. If $\partial_{x_1} \cdots \partial_{x_n} p(\bm{x}) > 0$ and
    \[
        \cpc_{\bm{1}}\left(\left.\partial_{x_i}\right|_{x_i=0} p(\bm{x})\right) = \left(\frac{n-1}{n}\right)^{n-1} \cpc_{\bm{1}}(p)
    \]
    for all $i \in [n]$, then $p$ has full support.
\end{corollary}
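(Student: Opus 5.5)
The plan is to combine \Cref{Gurvits-strengthening} with part $(2)$ of \Cref{univar-Newton-cap} to force every variable to have full degree $n$, and then to invoke \Cref{slc-support}.

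Fix $i \in [n]$ and let $d_i$ be the degree of $x_i$ in $p$. Since $\partial_{x_1} \cdots \partial_{x_n} p > 0$, $p$ is not identically zero. Moreover, the monomial $x_1 \cdots x_n$ appears in $p$, so every variable actually occurs and $d_i \geq 1$. By \Cref{Gurvits-strengthening}, we have
\[
    \cpc_{\bm{1}}\left(\left.\partial_{x_i}\right|_{x_i=0} p(\bm{x})\right) \geq \gamma_{n,d_i} \left(\frac{n-1}{n}\right)^{n-1} \cpc_{\bm{1}}(p).
\]
Comparing with the hypothesized equality gives
\[
    \left(\frac{n-1}{n}\right)^{n-1} \cpc_{\bm{1}}(p) \geq \gamma_{n,d_i} \left(\frac{n-1}{n}\right)^{n-1} \cpc_{\bm{1}}(p).
\]

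To cancel $\cpc_{\bm{1}}(p)$ we need it to be strictly positive. For this I will use \Cref{thm:Gurvits-SLC}, which gives $\cpc_{\bm{1}}(p) \leq p_{\bm{1}} \cdot n^n/n!$, so capacity is finite. I also need positivity, and I would argue it as follows. Since the $x_1 \cdots x_n$ coefficient is positive, $p(\bm{x}) \geq p_{\bm{1}} \, x_1 \cdots x_n$ for all $\bm{x} > \bm{0}$. Hence $p(\bm{x})/\bm{x}^{\bm{1}} \geq p_{\bm{1}} > 0$, and so $\cpc_{\bm{1}}(p) \geq p_{\bm{1}} > 0$. This is immediate.

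After dividing, we get $\gamma_{n,d_i} \leq 1$. By part $(2)$ of \Cref{univar-Newton-cap}, $\gamma_{n,d} > 1$ for every $d < n$ and $\gamma_{n,n} = 1$. Since $d_i \leq n$ by homogeneity, we conclude $d_i = n$. Thus the coefficient of $x_i^n$ is positive for every $i$, and \Cref{slc-support} yields full support.

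The only delicate point is the edge case $n = 1$, where $(n-1)/n$ and the constants $\gamma_{1,d}$ degenerate. There, however, $p = c\,x_1$ with $c > 0$, so $p$ trivially has full support. Beyond that, the main care needed is checking that the hypotheses of \Cref{Gurvits-strengthening} apply verbatim, which they do since they are exactly ours.
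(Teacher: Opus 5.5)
Your proof is correct and follows the paper's argument: \Cref{Gurvits-strengthening}, combined with $\gamma_{n,d} > 1$ for $d < n$, forces $d_i = n$ for every $i$, and \Cref{slc-support} then gives full support. The paper phrases this as a contradiction and leaves the needed positivity of $\cpc_{\bm{1}}(p)$ implicit; you supply it correctly via $p(\bm{x}) \geq p_{\bm{1}} \bm{x}^{\bm{1}}$.
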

\begin{proof}
    So as to obtain a contradiction, suppose $p$ does not have full support. Then by \Cref{slc-support}, there exists $i \in [n]$ such that the coefficient of $x_i^n$ is $0$ in $p$. Thus by \Cref{Gurvits-strengthening} we have
    \[
        \cpc_{\bm{1}}\left(\left.\partial_{x_i}\right|_{x_i=0} p(\bm{x})\right) \geq \gamma_{n,d_i} \left(\frac{n-1}{n}\right)^{n-1} \cpc_{\bm{1}}(p),
    \]
    where $d_i < n$. Since $\gamma_{n,d_i} > 1$ by \Cref{univar-Newton-cap}, this is a contradiction.
\end{proof}

% reduce to DS:
% unique minimzer capacity from full support
% elementary proof (paper with Samorodnitsky) - set of inputs prod = 1 with some upper bound on capacity is compact (t1/t2 vs t2/t1)
% scale to DS

\subsection{General Capacity Minimizers}

This section is devoted to some basic results on capacity minimizers and uniqueness, most of which can be found in \cite{GurvitsSamorodnitsky2000}. Note that none of these results require the SLC property. Recall that a homogeneous polynomial $p \in \R_{\geq 0}[x_1,\ldots,x_n]$ is \textbf{doubly stochastic} if $p(\bm{1}) = 1$ and $\nabla p(\bm{1}) = \bm{1}$.

\begin{lemma} \label{compact-capacity}
    Let $p \in \R_{\geq 0}[x_1,\ldots,x_n]$ be $n$-homogeneous with full support. Then for all $c > \cpc_{\bm{1}}(p)$ we have that $K_c := \{\bm{x} > 0 : \prod_i x_i = 1, ~ p(\bm{x}) \leq c\}$ is compact. In particular, $\cpc_{\bm{1}}(p)$ is minimized at some $\bm{y} > \bm{0}$.
\end{lemma}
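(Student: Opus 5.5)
The plan is to use full support to get a pointwise lower bound on $p$ by monomials, and from that bound show that $K_c$ is closed and bounded in $\R^n$. Since $p$ has full support, every coefficient of $p$ is strictly positive. In particular the coefficient $a_i > 0$ of $x_i^n$ and the coefficient $b_i > 0$ of $x_1\cdots x_n \cdot x_i^{-1} x_j$ are positive, and more generally every monomial $\bm{x}^{\bm\alpha}$ with $|\bm\alpha| = n$ appears. Because all terms are nonnegative on $\R_{>0}^n$, for each $\bm\alpha$ we have $p(\bm{x}) \geq p_{\bm\alpha} \bm{x}^{\bm\alpha}$.

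\textbf{Upper bound.} For $\bm{x} \in K_c$ we have $c \geq p(\bm{x}) \geq a_i x_i^n$, so $x_i \leq (c/a_i)^{1/n}$ for every $i$. Thus $K_c$ is bounded.

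\textbf{Lower bound away from zero.} Use the monomial $x_j^{n-1}x_i$ for $j \neq i$, or more simply $x_j^{n} \cdot (x_i/x_j)$. A cleaner route is to use $\prod_k x_k = 1$: then
\[
x_i = \frac{1}{\prod_{k\neq i} x_k} \geq \prod_{k \neq i} \left(\frac{a_k}{c}\right)^{1/n},
\]
using the upper bounds already obtained. So every coordinate of every point of $K_c$ is bounded below by a positive constant $\delta$. Hence $K_c$ is contained in the box $[\delta, M]^n$, a compact subset of $\R_{>0}^n$.

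\textbf{Closedness and conclusion.} Inside $[\delta,M]^n$, the set $K_c$ is cut out by the closed conditions $\prod_i x_i = 1$ and $p(\bm{x}) \leq c$, both of which involve continuous functions. So $K_c$ is closed, and therefore compact. Because $c > \cpc_{\bm{1}}(p)$, the set $K_c$ is nonempty. By homogeneity, $p(\bm{x})/\bm{x}^{\bm{1}}$ is invariant under scaling $\bm{x} \mapsto \lambda\bm{x}$, since both numerator and denominator have degree $n$. The infimum defining $\cpc_{\bm{1}}(p)$ therefore equals the infimum of $p$ over $\{\prod x_i = 1\}$, which in turn equals the infimum of $p$ over $K_c$. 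A continuous function on a nonempty compact set attains its minimum, giving a minimizer $\bm{y} > \bm{0}$. Only the pure-power coefficients are actually needed, so there is no real obstacle here; the one step requiring care is deriving the lower bound from the constraint $\prod_k x_k = 1$.
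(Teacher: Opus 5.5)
Your proof is correct. It has the same skeleton as the paper's: bound $p$ below by a single monomial, trap every point of $K_c$ in a box $[\delta,M]^n \subset \R_{>0}^n$, and conclude by closedness.

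The difference is which monomials are used. The paper uses the coefficients $p_{\bm{1}+\bm{e}_i-\bm{e}_j}$. On the constraint set the monomial $\bm{x}^{\bm{1}+\bm{e}_i-\bm{e}_j}$ is exactly $x_i/x_j$, so the paper gets the ratio bound $x_i/x_j \leq c/p_{\min}$ directly. It then obtains the symmetric box $[p_{\min}/c,\, c/p_{\min}]^n$ from $\prod_i x_i = 1$. You instead use the pure powers $x_i^n$ to get absolute upper bounds, and then turn them into lower bounds via the product constraint.

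Each route shows that full support is more than is needed, but in a different way. The paper's argument only needs the coefficients adjacent to $\bm{1}$, which is the basis of its closing remark about weakening the support condition. Yours only needs the $n$ coefficients of $x_i^n$; for SLC polynomials, positivity of these is already equivalent to full support by \Cref{slc-support}.

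You also spell out the ``in particular'' part, which the paper leaves implicit: non-emptiness of $K_c$, the reduction to $\prod_i x_i = 1$ by homogeneity, and attainment of the minimum on a compact set. The coefficients of $x_1\cdots x_n \cdot x_i^{-1}x_j$ that you mention at the start are never used and can be dropped.
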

\begin{proof}
    Let $p(\bm{x}) = \sum_{\bm\kappa} p_{\bm\kappa} \bm{x}^{\bm\kappa}$, and let $p_{\min} > 0$ be the minimum coefficient of $p$. Fix $c > \cpc_{\bm{1}}(p)$. For any $i \neq j$ and any $\bm{x} \in K_c$ we have
    \[
        c \geq p(\bm{x}) = \sum_{\bm\kappa} p_{\bm\kappa} \bm{x}^{\bm\kappa} \geq \bm{x}^{\bm{1}} \cdot \frac{x_i}{x_j} \cdot p_{\bm{1}+\bm{e}_i-\bm{e}_j} \geq \frac{x_i}{x_j} \cdot p_{\min}.
    \]
    Thus $\max_{i,j}\left(\frac{x_i}{x_j}\right) \leq \frac{c}{p_{\min}}$, which implies $x_i \in [\frac{p_{\min}}{c}, \frac{c}{p_{\min}}]$ since $\prod_i x_i = 1$. Therefore
    \[
        K_c = \left\{\bm{x} \in \left[\frac{p_{\min}}{c}, \frac{c}{p_{\min}}\right]^n : \prod_i x_i = 1, ~ p(\bm{x}) \leq c\right\},
    \]
    which implies $K_c$ is compact.
\end{proof}

\begin{lemma} \label{ds-cap-min-at-1}
    Let $p \in \R_{\geq 0}[x_1,\ldots,x_n]$ be such that $p(\bm{1}) = 1$. Then $p$ is doubly stochastic if and only if $\bm{1}$ minimizes $\cpc_{\bm{1}}(p)$.
\end{lemma}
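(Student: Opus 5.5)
The natural approach is to pass to logarithmic coordinates, where capacity becomes a convex optimization problem. Set $\bm{x} = e^{\bm{y}}$ and define
\[
    F(\bm{y}) := \log p(e^{\bm{y}}) - \langle \bm{1}, \bm{y} \rangle .
\]
Then $\cpc_{\bm{1}}(p) = \inf_{\bm{y} \in \R^n} e^{F(\bm{y})}$, and $\bm{1}$ minimizes the capacity exactly when $\bm{0}$ minimizes $F$. Since $p$ has non-negative coefficients, $\log p(e^{\bm{y}})$ is a log-sum-exp of affine functions of $\bm{y}$ (with non-negative weights). Hence $F$ is convex on $\R^n$; this is the key structural fact. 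Note that $p \neq 0$ because $p(\bm{1}) = 1$, so $F$ is finite everywhere.

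Next we compute the gradient at the origin. A direct calculation gives
\[
    \nabla F(\bm{y}) = \frac{e^{\bm{y}} \odot \nabla p(e^{\bm{y}})}{p(e^{\bm{y}})} - \bm{1},
\]
so $\nabla F(\bm{0}) = \nabla p(\bm{1})/p(\bm{1}) - \bm{1} = \nabla p(\bm{1}) - \bm{1}$, using $p(\bm{1}) = 1$.

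\emph{($\implies$)} If $p$ is doubly stochastic, then $\nabla F(\bm{0}) = \bm{0}$. A convex differentiable function attains its global minimum at any critical point, so $\bm{0}$ minimizes $F$, i.e. $\bm{1}$ minimizes $\cpc_{\bm{1}}(p)$.

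\emph{($\impliedby$)} If $\bm{1}$ minimizes $p(\bm{x})/\bm{x}^{\bm{1}}$ over the open set $\R^n_{>0}$, then it is an interior minimizer of a differentiable function. Equivalently $\bm{0}$ is an interior minimizer of $F$ on $\R^n$, so $\nabla F(\bm{0}) = \bm{0}$ and therefore $\nabla p(\bm{1}) = \bm{1}$. This direction does not even need convexity. One can also see it directly: differentiating $p(\bm{x})/\bm{x}^{\bm{1}}$ in $x_i$ at $\bm{1}$ gives $\partial_i p(\bm{1}) - p(\bm{1}) = \partial_i p(\bm{1}) - 1$, which must vanish.

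Homogeneity is not needed for either direction, consistent with the statement, which assumes only $p(\bm{1}) = 1$. Note, though, that Euler's identity together with $\nabla p(\bm{1}) = \bm{1}$ forces degree $n$ whenever $p$ is homogeneous, matching the paper's convention for doubly stochastic polynomials.

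The argument has no genuine obstacle. The only step requiring care is justifying the convexity of $\log p(e^{\bm{y}})$, which follows from Hölder's inequality or from the standard convexity of log-sum-exp.
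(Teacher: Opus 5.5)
Your proof is correct and is essentially the paper's argument: pass to the convex, smooth function $\log p(e^{\bm{t}}) - \langle \bm{t}, \bm{1} \rangle$ and observe that its gradient at $\bm{0}$ is $\nabla p(\bm{1}) - \bm{1}$. Your remark that homogeneity is not needed is fine for the gradient condition, but note that the paper defines ``doubly stochastic'' only for homogeneous $p$, so the lemma is implicitly about homogeneous polynomials (the paper's proof makes the same identification you do).
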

\begin{proof}
    First, $\bm{1}$ minimizes $\cpc_{\bm{1}}(p)$ if and only if
    \[
        \inf_{\bm{t} \in \R^n} \left[\log p(e^{\bm{t}}) - \langle \bm{t}, \bm{1} \rangle\right]
    \]
    is minimized at $\bm{t} = \bm{0}$. Since $\log p(e^{\bm{t}}) - \langle \bm{t}, \bm{1} \rangle$ is convex and smooth, this is equivalent to $\left.\nabla\right|_{\bm{t}=\bm{0}} \log p(e^{\bm{t}}) = \bm{1}$, where $\left.\nabla\right|_{\bm{t}=\bm{0}} \log p(e^{\bm{t}}) = \nabla p(\bm{1})$.
    This is the same as saying $p$ is doubly stochastic.
\end{proof}

\begin{lemma} \label{ds-scaling}
    Let $p \in \R_{\geq 0}[x_1,\ldots,x_n]$ be $n$-homogeneous with full support. Then there exists $\bm{y} > \bm{0}$ such that $\frac{p(\bm{y} \odot \bm{x})}{p(\bm{y})}$ is doubly stochastic.
\end{lemma}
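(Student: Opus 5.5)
We combine \Cref{compact-capacity} with \Cref{ds-cap-min-at-1}, applied after rescaling.

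First, we may assume $\cpc_{\bm{1}}(p) > 0$. Full support means every coefficient of $p$ is positive, and in particular so is the $x_1 \cdots x_n$ coefficient $p_{\bm{1}}$. Hence $p(\bm{x})/\bm{x}^{\bm{1}} \geq p_{\bm{1}} > 0$, so $\cpc_{\bm{1}}(p) \geq p_{\bm{1}} > 0$. By \Cref{compact-capacity}, the infimum defining $\cpc_{\bm{1}}(p)$ is attained at some $\bm{y} > \bm{0}$.

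Define
\[
    q(\bm{x}) := \frac{p(\bm{y} \odot \bm{x})}{p(\bm{y})}.
\]
Then $q$ is $n$-homogeneous with non-negative coefficients and $q(\bm{1}) = 1$. We claim $\bm{1}$ minimizes $q(\bm{x})/\bm{x}^{\bm{1}}$ over $\bm{x} > \bm{0}$. For any $\bm{x} > \bm{0}$,
\[
    \frac{q(\bm{x})}{\bm{x}^{\bm{1}}} = \frac{\bm{y}^{\bm{1}}}{p(\bm{y})} \cdot \frac{p(\bm{y} \odot \bm{x})}{(\bm{y} \odot \bm{x})^{\bm{1}}} \geq \frac{\bm{y}^{\bm{1}}}{p(\bm{y})} \cdot \frac{p(\bm{y})}{\bm{y}^{\bm{1}}} = 1 = \frac{q(\bm{1})}{\bm{1}^{\bm{1}}},
\]
where the inequality uses that $\bm{y}$ minimizes $p(\bm{z})/\bm{z}^{\bm{1}}$ over $\bm{z} > \bm{0}$, applied with $\bm{z} = \bm{y} \odot \bm{x}$.

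Since $q(\bm{1}) = 1$ and $\bm{1}$ minimizes $\cpc_{\bm{1}}(q)$, \Cref{ds-cap-min-at-1} shows that $q$ is doubly stochastic. This proves the lemma.

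The only substantive step is that the capacity is attained, and that is exactly what \Cref{compact-capacity} provides under the full-support hypothesis. If one prefers not to rely on the convexity in \Cref{ds-cap-min-at-1}, the same conclusion follows from the first-order condition at $\bm{y}$. Write $g(\bm{t}) = \log p(\bm{y} \odot e^{\bm{t}}) - \langle \bm{t}, \bm{1}\rangle$, which is minimized at $\bm{t} = \bm{0}$, so $\nabla g(\bm{0}) = \bm{0}$. Computing $\partial_{t_i} g(\bm{0}) = y_i\, \partial_i p(\bm{y})/p(\bm{y}) - 1$, this gives $y_i\, \partial_i p(\bm{y})/p(\bm{y}) = 1$, which is precisely $\nabla q(\bm{1}) = \bm{1}$.
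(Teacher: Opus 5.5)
Your proof is correct and follows the paper's argument: take a capacity minimizer $\bm{y}$ from \Cref{compact-capacity}, observe that $\bm{1}$ then minimizes the capacity of the rescaled polynomial $q$, and conclude with \Cref{ds-cap-min-at-1}. Your added details, namely the explicit rescaling inequality and the first-order computation $y_i\,\partial_i p(\bm{y})/p(\bm{y}) = 1$, spell out steps the paper leaves implicit.
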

\begin{proof}
    By \Cref{compact-capacity}, there exists some $\bm{y} > \bm{0}$ which minimizes $\cpc_{\bm{1}}(p)$. Letting $q(\bm{x}) = \frac{p(\bm{y} \odot \bm{x})}{p(\bm{y})}$, we have $q(\bm{1}) = 1$. Since $\bm{y}$ minimizes $\cpc_{\bm{1}}(p)$, we have that $\bm{1}$ minimizes $\cpc_{\bm{1}}(q)$. Therefore $q$ is doubly stochastic by \Cref{ds-cap-min-at-1}.
\end{proof}

% uniqueness of minimizer for DS with p(1) = 1:
% log sum x_i p_i > sum p_i log x_i strictly whenever x_i not all ones (for log and positive p_i probabilities)
% log p(x) = log sum p_alpha x^alpha > sum p_alpha log x^alpha strict unless x^alpha all equal
% Euler identity and DS finishes the proof

\begin{lemma} \label{ds-cap-unique-min}
    Let $p \in \R_{\geq 0}[x_1,\ldots,x_n]$ be $n$-homogeneous with full support. If $p$ is doubly stochastic, then $p$ is uniquely minimized over $\{\bm{x} > 0 : \prod_i x_i = 1\}$ at $\bm{x} = \bm{1}$.
\end{lemma}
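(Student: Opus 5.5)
We use the concavity-of-log argument sketched in the comments just before the statement, namely Jensen's inequality applied to the coefficients of $p$ viewed as a probability distribution. Write $p(\bm{x}) = \sum_{\bm\kappa} p_{\bm\kappa} \bm{x}^{\bm\kappa}$. Since $p(\bm{1}) = 1$ and all coefficients are non-negative, the weights $p_{\bm\kappa}$ form a probability distribution on the support of $p$. By full support, every $\bm\kappa \in \Z_{\geq 0}^n$ with $\sum_i \kappa_i = n$ has $p_{\bm\kappa} > 0$.

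Fix $\bm{x} > \bm{0}$ with $\prod_i x_i = 1$. Strict concavity of $\log$ gives
\[
    \log p(\bm{x}) = \log \sum_{\bm\kappa} p_{\bm\kappa} \bm{x}^{\bm\kappa} \geq \sum_{\bm\kappa} p_{\bm\kappa} \log \bm{x}^{\bm\kappa} = \sum_{i=1}^n \Big(\sum_{\bm\kappa} p_{\bm\kappa} \kappa_i\Big) \log x_i .
\]
Equality holds if and only if the values $\bm{x}^{\bm\kappa}$ are all equal over the support. The inner sum $\sum_{\bm\kappa} p_{\bm\kappa}\kappa_i$ equals $\partial_{x_i} p(\bm{1})$, which is $1$ by double stochasticity. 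The right-hand side is therefore $\sum_i \log x_i = 0$. This gives $p(\bm{x}) \geq 1 = p(\bm{1})$, so $\bm{1}$ is a minimizer over $\{\bm{x} > 0 : \prod_i x_i = 1\}$. This is consistent with \Cref{ds-cap-min-at-1}.

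For uniqueness, suppose $p(\bm{x}) = 1$. Then equality holds in Jensen, so $\bm{x}^{\bm\kappa}$ is constant over the support. Full support lets us compare the exponents $\bm\kappa = n\bm{e}_i$ and $\bm\kappa = n\bm{e}_j$, which gives $x_i^n = x_j^n$. Hence all $x_i$ are equal. Together with $\prod_i x_i = 1$ and $x_i > 0$, this forces $\bm{x} = \bm{1}$.

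The only delicate point is the equality case of Jensen. We need strict concavity of $\log$ and strictly positive weights on each monomial being compared, and full support supplies both.
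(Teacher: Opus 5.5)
Your proof is correct and follows essentially the same route as the paper: Jensen's inequality with the coefficients as probability weights, the identity $\sum_{\bm\kappa} p_{\bm\kappa}\kappa_i = \partial_{x_i} p(\bm{1}) = 1$, and the equality case of strict concavity of $\log$. Your explicit comparison of the exponents $n\bm{e}_i$ and $n\bm{e}_j$, combined with the constraint $\prod_i x_i = 1$, spells out the step that the paper states only tersely as ``strict unless $\bm{x} = \bm{1}$''.
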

\begin{proof}
    Let $p(\bm{x}) = \sum_{\bm\kappa} p_{\bm\kappa} \bm{x}^{\bm\kappa}$. Using Jensen's inequality, $p(\bm{1}) = 1$ implies
    \[
        \log p(\bm{x}) = \log \left(\sum_{\bm\kappa} p_{\bm\kappa} \bm{x}^{\bm\kappa}\right) \geq \sum_{\bm\kappa} p_{\bm\kappa} \log\left(\bm{x}^{\bm\kappa}\right).
    \]
    Since $\log$ is strictly concave, this inequality is strict unless $\bm{x}^{\bm\kappa} = \bm{x}^{\bm\kappa'}$ for all $\bm\kappa,\bm\kappa'$. Thus the above inequality is strict unless $\bm{x} = \bm{1}$. We now further have
    \[
        \sum_{\bm\kappa} p_{\bm\kappa} \log\left(\bm{x}^{\bm\kappa}\right) = \sum_{i=1}^n \log x_i \sum_{\bm\kappa} \kappa_i p_{\bm\kappa} = \sum_{i=1}^n \log x_i \cdot \partial_{x_i} p(\bm{1}) = 0,
    \]
    since $\partial_{x_i} p(\bm{1}) = 1$ by double stochasticity and $\sum_{i=1}^n \log x_i = 0$. Therefore, $p(\bm{x}) \geq 1$ with strict inequality unless $\bm{x} = \bm{1}$.
\end{proof}

\begin{corollary} \label{general-cap-unique-min}
    Let $p \in \R_{\geq 0}[x_1,\ldots,x_n]$ be $n$-homogeneous with full support. Then $p$ is uniquely minimized over $\{\bm{x} > 0 : \prod_i x_i = 1\}$.
\end{corollary}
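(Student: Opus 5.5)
The plan is to reduce to the doubly stochastic case by a diagonal scaling, and then transport the uniqueness statement of \Cref{ds-cap-unique-min} back through that scaling. Since $p$ is $n$-homogeneous with full support, \Cref{ds-scaling} provides $\bm{y} > \bm{0}$ such that $q(\bm{x}) := \frac{p(\bm{y} \odot \bm{x})}{p(\bm{y})}$ is doubly stochastic. We may normalize $\bm{y}$ so that $\prod_i y_i = 1$, because rescaling $\bm{y}$ by a positive scalar leaves $q$ unchanged by homogeneity. Moreover $q$ again has full support, since the scaling only multiplies each coefficient by a positive number.

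For $\bm{x} > \bm{0}$ with $\prod_i x_i = 1$, set $\bm{z} = \bm{x} \odot \bm{y}^{-1}$. This map is a bijection of $\{\bm{x} > \bm{0} : \prod_i x_i = 1\}$ onto itself. We also have $p(\bm{x}) = p(\bm{y})\, q(\bm{z})$, and $p(\bm{y}) > 0$ is a fixed constant. By \Cref{ds-cap-unique-min}, $q$ is uniquely minimized on this set at $\bm{z} = \bm{1}$. Hence $p$ is uniquely minimized on the set at $\bm{x} = \bm{y}$.

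The only point needing care is that the normalization $\prod_i y_i = 1$ is legitimate. If one prefers not to rely on homogeneity there, one can instead take $\bm{y}$ directly from \Cref{compact-capacity}, which yields a minimizer $\bm{y}$ lying in $\{\prod_i x_i = 1\}$, and then scale by that $\bm{y}$ as in the proof of \Cref{ds-scaling}. I do not expect any step of this argument to be a real obstacle.
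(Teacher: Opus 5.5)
Your proposal is correct and follows the paper's proof: scale to a doubly stochastic $q$ via \Cref{ds-scaling}, apply \Cref{ds-cap-unique-min} to $q$, and transfer the unique minimizer back to $\bm{y}$. You also make explicit two points the paper leaves implicit: that $q$ inherits full support (needed to invoke \Cref{ds-cap-unique-min}), and that $\bm{y}$ can be normalized to $\prod_i y_i = 1$, so that the substitution $\bm{z} = \bm{x} \odot \bm{y}^{-1}$ preserves the constraint set.
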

\begin{proof}
    By \Cref{ds-scaling}, there exists $\bm{y} > \bm{0}$ such that $q(\bm{x}) = \frac{p(\bm{y} \odot \bm{x})}{p(\bm{y})}$ is doubly stochastic. By \Cref{ds-cap-unique-min}, $q$ is uniquely minimized over $\{\bm{x} > 0 : \prod_i x_i = 1\}$ at $\bm{x} = \bm{1}$. Therefore $p$ is uniquely minimized over $\{\bm{x} > 0 : \prod_i x_i = 1\}$ at $\bm{x} = \bm{y}$.
\end{proof}

As a final remark, we note that \Cref{general-cap-unique-min} can be strengthened to say that we have unique minimization if and only if $p$ has full support. In fact, by the proof of \Cref{compact-capacity}, the support condition can be weakened to just the coefficients ``nearby'' to the all-ones vector. These facts were already proven in the hyperbolic case in \cite{Gur07VdW}.

\subsection{Proof of \Cref{section-SLC-unique-min}} \label{subsec:proof-SLC-unique-min}

% uniqueness of univariate vdw for SLC is different from real stable

% Finally, we can assume (summary):
% - polynomial is DS
% - all coeff >0 strictly
% - p(1,..,1,t) is (a+bt)^n -> values of a, b determined
% -  obtain value at e_i, use argument from pdf in email

% mixed volume uses Brunn-Minkowski uniqueness

Let $p(\bm{x})$ be an $n$-variate $n$-homogeneous SLC polynomial for which $\partial_{x_1} \cdots \partial_{x_n} p = \frac{n!}{n^n} \cpc_{\bm{1}}(p) > 0$. Define $p_i(\bm{x}) = \left.\partial_{x_i}\right|_{x_i=0} p(\bm{x})$ for all $i \in [n]$, let $d_i$ denote the degree of $x_i$ in $p(\bm{x})$, and let $\bm{v} = \frac{\nabla p(\bm{1})}{n}$. Our goal is to prove that $p(\bm{x}) = \left(\sum_i v_i x_i\right)^n$.

\paragraph{Step 1: Prove $p$ has full support.} By \Cref{Gurvits-strengthening} we have
\[
    \cpc_{\bm{1}}(p_i) \geq \gamma_{n,d_i} \left(\frac{n-1}{n}\right)^{n-1} \cpc_{\bm{1}}(p)
\]
for all $i \in [n]$, where $\gamma_{n,d_i} \geq 1$ is as defined in \Cref{univar-Newton-cap}. Thus for $i = n$, by iteratively using \Cref{Gurvits-strengthening} we obtain
\begin{align*}
    \frac{n!}{n^n} \cpc_{\bm{1}}(p) &= \partial_{x_1} \cdots \partial_{x_n} p \\
        &= \cpc_{\bm{1}}\left(\left.\partial_{x_2}\right|_{x_2=0} \cdots \left.\partial_{x_n}\right|_{x_n=0} p\right) \\
        &\geq \left(\frac{1}{2}\right)^1 \cpc_{\bm{1}}\left(\left.\partial_{x_3}\right|_{x_3=0} \cdots \left.\partial_{x_n}\right|_{x_n=0} p\right) \\
        &\geq \left(\frac{1}{2}\right)^1 \left(\frac{2}{3}\right)^2 \cpc_{\bm{1}}\left(\left.\partial_{x_4}\right|_{x_4=0} \cdots \left.\partial_{x_n}\right|_{x_n=0} p\right) \\
        &\geq \cdots \\
        &\geq \left[\prod_{k=1}^{n-1} \left(\frac{k-1}{k}\right)^{k-1}\right] \cpc_{\bm{1}}\left(\left.\partial_{x_n}\right|_{x_n=0} p\right) \\
        &\geq \gamma_{n,d_n} \left[\prod_{k=1}^n \left(\frac{k-1}{k}\right)^{k-1}\right] \cpc_{\bm{1}}(p) \\
        &= \gamma_{n,d_n} \cdot \frac{n!}{n^n} \cpc_{\bm{1}}(p) \\
        &\geq \frac{n!}{n^n} \cpc_{\bm{1}}(p),
\end{align*}
which implies all inequalities are actually equalities. The same argument works for any ordering of the variables, and thus $\gamma_{n,d_i} = 1$ and
\[
    \cpc_{\bm{1}}(p_i) = \left(\frac{n-1}{n}\right)^{n-1} \cpc_{\bm{1}}(p)
\]
for all $i \in [n]$. Since $\gamma_{n,d_i} = 1$ implies $d_i = n$ by \Cref{univar-Newton-cap}, we therefore have that $p$ has full support by \Cref{slc-support}.

\paragraph{Step 2: Reduce to doubly stochastic $p$.} By \Cref{ds-scaling}, there exists $\bm{y} > 0$ such that $q(\bm{x}) := \frac{p(\bm{y} \cdot \bm{x})}{p(\bm{y})}$ is doubly stochastic. Therefore by assumption we have
\[
    \frac{p(\bm{y})}{y_1 \cdots y_n} \cdot \partial_{x_1} \cdots \partial_{x_n} q(\bm{x}) = \partial_{x_1} \cdots \partial_{x_n} p(\bm{x}) = \frac{n!}{n^n} \cpc_{\bm{1}}(p).
\]
By \Cref{thm:Gurvits-SLC} and \Cref{ds-cap-min-at-1}, we also have
\[
    \partial_{x_1} \cdots \partial_{x_n} q(\bm{x}) \geq \frac{n!}{n^n}
\]
since $q$ is doubly stochastic. Combining these gives
\[
    \frac{n!}{n^n} \cpc_{\bm{1}}(p) = \frac{p(\bm{y})}{y_1 \cdots y_n} \cdot \partial_{x_1} \cdots \partial_{x_n} q(\bm{x}) \geq \frac{n!}{n^n} \cdot \frac{p(\bm{y})}{y_1 \cdots y_n} \geq \frac{n!}{n^n} \cpc_{\bm{1}}(p),
\]
and thus these inequalities must actually be equalities. Therefore
\[
    \partial_{x_1} \cdots \partial_{x_n} q(\bm{x}) = \frac{n!}{n^n},
\]
and if the theorem holds for doubly stochastic $q$, we would obtain $q(\bm{x}) = \left(\frac{1}{n} \sum_i x_i\right)^n$. Thus
\[
    p(\bm{x}) = p(\bm{y}) \cdot q(\bm{y}^{-1} \odot \bm{x}) = p(\bm{y}) \cdot \left(\frac{1}{n} \sum_{i=1}^n \frac{x_i}{y_i}\right)^n,
\]
which implies $p$ is a power of a linear form. Therefore we must have $p(\bm{x}) = \left(\sum_i v_i x_i\right)^n$.

\paragraph{Step 3: Prove $p(\bm{e}_i) = \frac{1}{n^n}$ for all $i$.} From now on we assume that $p$ is doubly stochastic. By \Cref{ds-cap-unique-min}, $\cpc_{\bm{1}}(p)$ is uniquely minimized over $\{\bm{x} > 0 : \prod_i x_i = 1\}$ at $\bm{1}$. Similarly, since $p_n$ also has full support and is $(n-1)$-variate and $(n-1)$-homogeneous, we have that $\cpc_{\bm{1}}(p_n)$ is also uniquely minimized over $\{\bm{x} > 0 : \prod_i x_i = 1\}$ at some $\bm{y}$ by \Cref{general-cap-unique-min}.

Now $f(t) = p(y_1,\ldots,y_{n-1},t)$ is $n$-Newton by \Cref{slc-univariate-eval}. Thus by \Cref{univar-Newton-cap} we have
\begin{align*}
    \cpc_{\bm{1}}(p_n) &= \partial_t f(0) \\
        &\geq \left(\frac{n-1}{n}\right)^{n-1} \cpc_1(f) \\
        &\geq \left(\frac{n-1}{n}\right)^{n-1} \cpc_{\bm{1}}(p) \\
        &= \cpc_{\bm{1}}(p_n),
\end{align*}
which implies all inequalities are actually equalities. In particular,
\[
    \cpc_{\bm{1}}(p) = \cpc_1(f) = \cpc_{1}(p(y_1,\ldots,y_{n-1},t)),
\]
and since $\cpc_{\bm{1}}(p)$ is optimized uniquely at $\bm{1}$ over vectors with product equal to $1$, we have $\bm{y} = \bm{1}$ as well. By the above equalities, we also have
\[
    \partial_t f(0) = \left(\frac{n-1}{n}\right)^{n-1} \cpc_1(f)
\]
which implies $f(t) = (a + bt)^n$ for some $a,b > 0$ by \Cref{univar-Newton-cap}. Since $f(1) = p(\bm{1}) = 1$ and $\partial_t f(0) = \left.\partial_{x_n}\right|_{x_n=0} p(\bm{1}) = 1$ by double stochasticity, we have that $a = \frac{n-1}{n}$ and $b = \frac{1}{n}$. Therefore $p(\bm{e}_n) = \langle t^n \rangle f(t) = \frac{1}{n^n}$. Applying a similar argument to $p_i$ implies $p(\bm{e}_i) = \frac{1}{n^n}$ for all $i \in [n]$.

\paragraph{Step 4: Prove $p(\bm{x}) = \left(\frac{x_1 + \cdots + x_n}{n}\right)^n$.} Since $p$ is a homogeneous polynomial, it is sufficient to prove the equality on the standard simplex $\Delta_n$. By concavity of $p^{1/n}$ (see \Cref{alt-slc-def-1/d}), for all $\bm{x} \in \Delta_n$ we have
\[
    p^{1/n}(\bm{x}) \geq \sum_{i=1}^n x_i p^{1/n}(\bm{e}_i) = \frac{1}{n} \sum_{i=1}^n x_i.
\]
Concavity of $p^{1/n}$ also implies
\[
    p^{1/n}(\bm{x}) \leq p^{1/n}(\bm{1}) + \nabla p^{1/n}(\bm{1})^\top (\bm{x}-\bm{1}) = 1 + \frac{1}{n} \bm{1}^\top (\bm{x}-\bm{1}) = \frac{1}{n} \sum_{i=1}^n x_i.
\]
Therefore we have
\[
    \left(\frac{x_1 + \cdots + x_n}{n}\right)^n \leq p(\bm{x}) \leq \left(\frac{x_1 + \cdots + x_n}{n}\right)^n.
\]
Since this holds for all $\bm{x} \in \Delta_n$, we thus have that $p(\bm{x}) = \left(\frac{x_1 + \cdots + x_n}{n}\right)^n$ for all $\bm{x}$.

% Can't work for sum equals I and traces not 1:
% method does not know that sum is identity, and then we can't guarantee rank-one minimizer

\section{Failure of Rank-One Minimizers} \label{sec:proofs-failure-results}

In this section, we prove our main counter results to simple minimizer uniqueness.

\subsection{Permanent, with both marginals non-uniform}

Fix positive $\bm{r},\bm{c} \neq \bm{1}$ such that $\|\bm{r}\|_1 = \|\bm{c}\|_1 = n$, and consider $\Mat_n(\bm{r},\bm{c})$ which is the set of all $n \times n$ matrices with non-negative entries and row sums $\bm{r}$ and column sums $\bm{c}$. Note that this set contains the matrix $\frac{1}{n} \bm{r} \bm{c}^\top$. We will now prove \Cref{prop:diff-r-c}, which says that $\frac{1}{n} \bm{r} \bm{c}^\top$ never minimizes the permanent over $\Mat_n(\bm{r},\bm{c})$ in this case.

Consider the polynomial $\per(X)$ on a matrix of variables. The gradient of this polynomial can be expressed as a matrix, with entries
\[
    \nabla \per(X)_{ij} = \per(X_{ij}),
\]
where $X_{ij}$ is the matrix $X$ with row $i$ and column $j$ removed. In particular, we have
\[
    \nabla \per\left(\frac{1}{n} \bm{r} \bm{c}^\top\right) = \frac{1}{n^n} \prod_{k \neq i} r_i \prod_{k \neq j} c_j = \per\left(\frac{1}{n}\bm{r}\bm{c}^\top\right) \cdot \frac{1}{r_i c_j}.
\]
Now, any minimizer of the permanent which lies in the relative interior of $\Mat_n(\bm{r},\bm{c})$ must have gradient in $W^\perp$ where $W$ is the linear space parallel to the affine span of $\Mat_n(\bm{r},\bm{c})$. The space $W$ consists of all matrices with row and column sums all equal to $0$, and is of dimension $n^2 - (2n-1)$. Thus $W^\perp$ consists of all matrices $M$ with entries of the form
\[
    m_{ij} = a_i + b_j
\]
for some $a_i,b_j \in \R$. (Note that the dimension of this space of matrices is $2n-1$.)

Now we prove \Cref{prop:diff-r-c}. Since $\frac{1}{n} \bm{r} \bm{c}^\top$ lies in the relative interior of $\Mat_n(\bm{r},\bm{c})$, if it is a minimizer then we need $\nabla \per\left(\frac{1}{n} \bm{r} \bm{c}^\top\right) \in W^\perp$ which implies
\[
    \frac{1}{r_ic_j} = a_i + b_j \quad \text{for all $i,j$,}
\]
for some $a_i,b_j \in \R$. This cannot happen if $\bm{r},\bm{c} \neq \bm{1}$, by the following result applied to $\bm{x} = \bm{a}$, $\bm{y} = \bm{1}$, and $\bm{z} = \bm{b}$.

\begin{lemma}
    If for non-zero vectors $\bm{x},\bm{y},\bm{z}$ the matrix $\bm{x}\bm{y}^\top + \bm{y}\bm{z}^\top$ is rank one, then one of $\bm{x},\bm{z}$ is collinear to $\bm{y}$.
\end{lemma}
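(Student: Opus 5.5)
Suppose $\bm{x}\bm{y}^\top + \bm{y}\bm{z}^\top$ has rank one. I will show that if neither $\bm{x}$ nor $\bm{z}$ is collinear to $\bm{y}$, its rank is at least two. The argument is linear algebra about the column space (image) of $N := \bm{x}\bm{y}^\top + \bm{y}\bm{z}^\top$. For any vector $\bm{w}$ we have $N\bm{w} = \langle \bm{y},\bm{w}\rangle \bm{x} + \langle \bm{z},\bm{w}\rangle \bm{y}$. So the image of $N$ is contained in $\mathrm{span}(\bm{x},\bm{y})$, and my plan is to show it equals that span.

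Assume $\bm{x}$ is not collinear to $\bm{y}$ (otherwise we are done), so $\bm{x},\bm{y}$ are linearly independent. The linear map $\bm{w} \mapsto (\langle \bm{y},\bm{w}\rangle, \langle \bm{z},\bm{w}\rangle) \in \R^2$ is surjective exactly when $\bm{y}$ and $\bm{z}$ are linearly independent. Now assume also that $\bm{z}$ is not collinear to $\bm{y}$. Since both are non-zero, they are linearly independent, so the map is surjective. Then every vector $\alpha\bm{x} + \beta\bm{y}$ lies in the image of $N$. Hence $\mathrm{rank}(N) = \dim\mathrm{span}(\bm{x},\bm{y}) = 2$, contradicting rank one.

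Concretely, one can exhibit the witnesses: pick $\bm{w}_1$ with $\langle\bm{y},\bm{w}_1\rangle = 1$ and $\langle\bm{z},\bm{w}_1\rangle = 0$, and $\bm{w}_2$ with $\langle\bm{y},\bm{w}_2\rangle = 0$ and $\langle\bm{z},\bm{w}_2\rangle = 1$. These give $N\bm{w}_1 = \bm{x}$ and $N\bm{w}_2 = \bm{y}$, two independent image vectors. The hypotheses needed are $\bm{y}\neq \bm{0}$, used for the independence of $\bm{y},\bm{z}$, and $\bm{x}\neq\bm{0}$, used implicitly through non-collinearity.

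There is no real obstacle; the only subtlety is matching this to the application. There the matrix $\left(\frac{1}{r_ic_j}\right)$ is rank one, and equals $\bm{a}\bm{1}^\top + \bm{1}\bm{b}^\top$. Zero vectors $\bm{a}$ or $\bm{b}$ count as trivially collinear with $\bm{1}$, so the lemma's non-zero hypothesis should be read that way. The conclusion then forces $\bm{a}$ or $\bm{b}$ to be constant. Then $\frac{1}{r_ic_j}$ is constant in $i$ or in $j$, forcing $\bm{r}$ or $\bm{c}$ to be constant, hence equal to $\bm{1}$ by the sum normalization. This contradicts $\bm{r},\bm{c}\neq\bm{1}$.
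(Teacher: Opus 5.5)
Your proof is correct, but it reasons about the image of $N = \bm{x}\bm{y}^\top + \bm{y}\bm{z}^\top$, whereas the paper reasons about its kernel.

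The paper's route is as follows. It assumes $\bm{y},\bm{z}$ are independent and uses rank one to find $(t,s) \neq (0,0)$ with $N(t\bm{y}+s\bm{z}) = \bm{0}$; such a pair exists because the kernel has codimension one and so meets the plane $\mathrm{span}(\bm{y},\bm{z})$. It expands this as a combination of $\bm{x}$ and $\bm{y}$. If $\bm{x},\bm{y}$ are also independent, both coefficients must vanish. Taking $t$ times the first coefficient plus $s$ times the second then gives $(t\bm{y}+s\bm{z})^\top(t\bm{y}+s\bm{z}) = 0$, which contradicts $t\bm{y}+s\bm{z} \neq \bm{0}$.

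Your route is in effect the factorization $N = [\bm{x}\ \bm{y}]\,[\bm{y}\ \bm{z}]^\top$. The right factor maps onto $\R^2$ when $\bm{y},\bm{z}$ are independent, and the image is then all of $\mathrm{span}(\bm{x},\bm{y})$, so $\mathrm{rank}(N)=2$. This is slightly more direct and purely linear-algebraic. It never uses positive definiteness of the standard bilinear form, so it holds over any field. For instance, over $\C$ with the plain transpose the paper's last step can fail, since $\bm{v}^\top\bm{v}$ may vanish for $\bm{v}\neq\bm{0}$. The paper's argument is tied to real vectors, which is all that \Cref{prop:diff-r-c} needs.

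Your remark on the application is also correct and slightly tightens the paper. There $\bm{a}$ or $\bm{b}$ could be zero, which the lemma's non-zero hypothesis formally excludes. A zero vector simply makes $\frac{1}{r_ic_j}$ constant in one index, which forces $\bm{r}=\bm{1}$ or $\bm{c}=\bm{1}$ just as before.
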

\begin{proof}
    Suppose $\bm{y},\bm{z}$ are linearly independent. Since $\bm{x}\bm{y}^\top + \bm{y}\bm{z}^\top$ is rank one, there is some $t,s$ not both zero such that $(\bm{x}\bm{y}^\top + \bm{y}\bm{z}^\top)(t \bm{y} + s \bm{z}) = \bm{0}$. This implies
    \[
        \bm{0} = (t\bm{y}^\top\bm{y} + s\bm{y}^\top\bm{z})\bm{x} + (t\bm{z}^\top\bm{y} + s\bm{z}^\top\bm{z})\bm{y}.
    \]
    Thus if $\bm{x},\bm{y}$ are linearly independent, then we have
    \[
        t\bm{y}^\top\bm{y} + s\bm{y}^\top\bm{z} = 0 = t\bm{z}^\top\bm{y} + s\bm{z}^\top\bm{z}.
    \]
    Therefore
    \[
        0 = t\left(t\bm{y}^\top\bm{y} + s\bm{y}^\top\bm{z}\right) + s\left(t\bm{z}^\top\bm{y} + s\bm{z}^\top\bm{z}\right) = (t\bm{y}+s\bm{z})^\top(t\bm{y}+s\bm{z}) > 0,
    \]
    a contradiction. Thus either $\bm{x},\bm{y}$ are linearly dependent, or else $\bm{y},\bm{z}$ are linearly dependent.
\end{proof}

\subsection{Permanent, with marginals far from all-ones}

% \begin{proposition} \label{prop:minimizer-counter}
%     For all $n$ large enough, there exists $\bm{c}$ such that $\|\bm{c} - \bm{1}\|_1 < 2$ and a sparse matrix $M \in \Mat_n(\bm{c})$ (with linearly many non-zero entries) which has smaller permanent than that of $\frac{1}{n} \bm{1} \cdot \bm{c}^\top$.
% \end{proposition}

We now study an example which proves \Cref{prop:boundary-minimizer}. Fix $t > 0$ and $n \in \N$, and define $\epsilon := \frac{1}{n^{1+t}}$. Further define $\bm\alpha \in \R_{>0}^n$ and $\bm{c} \in \R_{>0}^{n+1}$ via
\[
    \bm\alpha := \big(1-\epsilon, 1-\epsilon, \ldots, 1-\epsilon) \in \R_{>0}^n \qquad \text{and} \qquad \bm{c} := \big(1+\alpha_1, \alpha_2, \alpha_3, \ldots, \alpha_n, \sum_{j=1}^n (1-\alpha_j)\big) \in \R_{>0}^{n+1}.
\]
Note that $\|\bm{c}-\bm{1}\|_1 = 1 + (n-2)\epsilon + 1 - n\epsilon = 2(1-\epsilon) < 2$, which by \cref{thm:rs-cap-bound} implies $\per(X) > 0$ for all $X \in \Mat_{n+1}(\bm{c})$. We first have
\[
    \per\left(\frac{1}{n+1} \bm{1} \cdot \bm{c}^\top\right) = \frac{(n+1)!}{(n+1)^{n+1}} (2-\epsilon) (1-\epsilon)^{n-1} n\epsilon = \frac{n!}{n^{(1+t)n}} \cdot \frac{2-n^{-1-t}}{1-n^{-1-t}} \cdot \frac{(n^{1+t}-1)^n}{n^t (n+1)^n}.
\]
Now consider the matrix
\[
    M = \begin{bmatrix}
        1 & 0 & 0 & \cdots & 0 & 0 \\
        1 - \sum_{j=1}^1 (1-\alpha_j) & \sum_{j=1}^1 (1-\alpha_j) & 0 & \cdots & 0 & 0 \\
        0 & 1 - \sum_{j=1}^2 (1-\alpha_j) & \sum_{j=1}^2 (1-\alpha_j) & \cdots & 0 & 0 \\
        0 & 0 & 1 - \sum_{j=1}^3 (1-\alpha_j) & \cdots & 0 & 0 \\
        \vdots & \vdots & \vdots & \ddots & \vdots & \vdots \\
        0 & 0 & 0 & \cdots & \sum_{j=1}^{n-1} (1-\alpha_j) & 0 \\
        0 & 0 & 0 & \cdots & 1 - \sum_{j=1}^n (1-\alpha_j) & \sum_{j=1}^n (1-\alpha_j)
    \end{bmatrix}.
\]
Note that $M \in \Mat_{n+1}(\bm{c})$. The matrix $M$ is upper-triangular, and thus we have
\[
    \per(M) = \prod_{k=1}^n \sum_{j=1}^k (1-\alpha_j) = \prod_{k=1}^n (k\epsilon) = n! \cdot \epsilon^n = \frac{n!}{n^{(1+t)n}}.
\]
We then further have
\[
    \frac{2-n^{-1-t}}{1-n^{-1-t}} \cdot \frac{(n^{1+t}-1)^n}{n^t (n+1)^n} \approx 2n^{(n-1)t} \cdot \frac{(n - n^{-t})^n}{(n+1)^n} \geq 2n^{(n-1)t} \cdot \frac{(n - 1)^n}{(n+1)^n} \approx \frac{2n^{(n-1)t}}{e^2} > 1,
\]
which implies $\per(M) < \per\left(\frac{1}{n+1} \bm{1} \cdot \bm{c}^\top\right)$ for large enough $n$.

% \subsubsection{The $3 \times 3$ case}

% Here we fully analyze the $3 \times 3$ case. Fix $a,b,c > 0$ such that $a+b+c = 3$. Any matrix in $\Mat_2((a,b,c)^\top, \bm{1})$ is of the form
% \[
%     M_x = \begin{bmatrix}
%         x & y & a-x-y \\
%         z & w & b-z-w \\
%         1-x-z & 1-y-w & c-2+x+y+z+w
%     \end{bmatrix}
% \]
% for some $x,y,z,w$.

\printbibliography

\end{document}